\theoremstyle{plain}
\numberwithin{figure}{section}
\theoremstyle{theorem}
\newtheorem{theorem}{Theorem}[section]
\newtheorem{cor}[theorem]{Corollary}
\newtheorem{definition}[theorem]{Definition}
\newtheorem{notation}[theorem]{Notation}
\newtheorem{lemma}[theorem]{Lemma}
\newtheorem{prop}[theorem]{Proposition}
\newtheorem{claim}[theorem]{Claim}
\newtheorem{remark}[theorem]{Remark}
\newtheorem{assumption}[theorem]{Assumption}
\newcommand{\Addresses}{{
  \bigskip
  \footnotesize

  James Farre, \textsc{Department of Mathematics, Yale University}\par\nopagebreak
  \textit{E-mail address}: \texttt{james.farre@yale.edu}
  }}
\newcommand{\G}{\PSL_2 \mathbb{C}}
\newcommand{\g}{\Gamma}
\newcommand{\ie}{{\itshape i.e.} }
\renewcommand{\to}[1][]{\xrightarrow{\ #1\ }}
\newcommand{\bR}{\mathbb{R}}
\newcommand{\bRplus}{\mathbb{R}^{\ge 0}}
\newcommand{\bC}{\mathbb{C}}
\newcommand{\bU}{\mathbb{U}}
\newcommand{\bS}{\mathbb{S}}
\newcommand{\bZ}{\mathbb{Z}}
\newcommand{\bN}{\mathbb{N}}
\newcommand{\bH}{\mathbb{H}}
\newcommand{\cH}{\mathcal{H}}
\newcommand{\cEL}{\mathcal{EL}}
\newcommand{\QQ}{\mathcal{Q}}
\newcommand{\cPML}{\mathcal{PML}}
\newcommand{\teich}{\mathscr{T}}
\newcommand{\cN}{\mathcal{N}}
\newcommand{\inverse}{^{-1}}
\newcommand{\Hbb}{\Hb_{\bb}}
\newcommand{\Cbb}{\Cb_{\bb}}
\newcommand{\model}[1]{\mathfrak{#1}}
\newcommand{\ladder}{\mathcal L}
\DeclareMathOperator{\cl}{cl }
\DeclareMathOperator{\Isom}{Isom}
\DeclareMathOperator{\Hb}{H}
\DeclareMathOperator{\bb}{b}
\DeclareMathOperator{\Cb}{C}
\DeclareMathOperator{\PSL}{PSL}
\DeclareMathOperator{\vol}{vol}
\DeclareMathOperator{\Vol}{Vol}
\DeclareMathOperator{\area}{area}
\DeclareMathOperator{\im}{im}
\DeclareMathOperator{\inj}{inj}
\DeclareMathOperator{\str}{str}
\DeclareMathOperator{\supp}{supp}
\DeclareMathOperator{\bilip}{bilip}
\DeclareMathOperator{\core}{core}
\DeclareMathOperator{\Jac}{Jac}
\DeclareMathOperator{\sech}{sech}
\DeclareMathOperator{\pleat}{pleat}
\DeclareMathOperator{\Hom}{Hom}
\begin{document}
\title{Relations in Bounded Cohomology}
\author{James Farre}
\maketitle
\begin{abstract}
We explain some interesting relations in the degree three bounded cohomology of surface groups.  Specifically, we show that if two faithful Kleinian surface group representations are quasi-isometric, then their bounded fundamental classes are the same in bounded cohomology.  This is novel in the setting that one end is degenerate, while the other end is geometrically finite.  We also show that a difference of two singly degenerate classes with bounded geometry is boundedly cohomologous to a doubly degenerate class, which has a nice geometric interpretation.  Finally, we explain that the above relations completely describe the linear dependences between the `geometric'  bounded classes defined by the volume cocycle with bounded geometry.  We obtain a mapping class group invariant Banach sub-space of the reduced degree three bounded cohomology with explicit topological generating set and describe all linear relations.   
\end{abstract}

\section{Introduction}
The cohomology of a surface group with negative Euler characterisic is well understood.  In contrast, the bounded cohomology in degree $1$ vanishes, in degree $2$, it is an infinite dimensional Banach space with the $\|\cdot\|_\infty$ norm \cite{mm:banach, ivanov:banach}, and in degree $3$ it is infinite dimensional but not even a Banach space \cite{soma:nonBanach}.  In degree $4$ and higher, almost nothing is known (see Remark \ref{remark n=4}).  In this paper, we study a subspace in degree $3$ generated by \emph{bounded fundamental classes} of infinite volume hyperbolic $3$-manifolds homotopy equivalent to a closed oriented surface $S$ with negative Euler characteristic.  These manifolds correspond  to $\G$ conjugacy classes of discrete and faithful representations $\rho: \pi_1(S)\to \G$, but we restrict ourselves to the representations that do not contain parabolic elements to avoid technical headaches.  Algebraically, the \emph{bounded fundamental class} of a manifold will be the pullback, via $\rho$, of the \emph{volume class} $\Vol\in \Hb_{\text{cb}}^3(\G;\bR)$.  It can also be understood as the singular bounded cohomology class with representative defined by taking the signed volume of a straightened tetrahedron  (see Sections \ref{bounded spaces}-\ref{bounded fundamental class}).  When we restrict our attention to bounded fundamental classes with bounded geometry, we will actually give a complete list of the linear dependences among these bounded classes.  As a consequence we obtain a subspace of $\Hb_{\bb}^3(\pi_1(S);\bR)$ on which the seminorm $\|\cdot \|_\infty$ restricts to a norm; this gives us a Banach space and we provide an explicit uncountable topological basis (see Section \ref{relations and banach subspace}).  This subspace is also mapping class group invariant and any linear dependencies among bounded classes have a very nice geometric description in terms of a certain cut-and-paste operation on hyperbolic manifolds (see Theorem \ref{relation 1} and the discussion following it).  

The positive resolution of Thurston's Ending Lamination Conjecture for surface groups due to \cite{minsky:ELTI} and \cite{brock-canary-minsky:ELTII} building on work of \cite{masur-minsky:complex1} and \cite{masur-minsky:complex2} tells us that the \emph{end invariants} of a hyperbolic structure on $S\times \bR$ determine its isometry class which, in the totally degenerate setting, is a version of the motto, ``topology implies geometry.''  Suppose $\rho_1$ and $\rho_2$ are discrete, faithful, without parabolics, and their quotient manifolds $M_{\rho_i}$ share one geometrically infinite end invariant.  Then they are quasiconformally conjugate, and there is a bi-Lipschitz homeomorphism in the preferred homotopy class of mappings $M_{\rho_1}\to M_{\rho_2}$ inducing $\rho_2\circ\rho_1\inverse$ on fundamental groups (see Section \ref{singly degenerate classes}).  The bi-Lipschitz constant depends on the dilatation of the quasi-conformal conjugacy, which is essentially the exponential of the Teichm\"uller distance between their geometrically finite end invariants.  This bi-Lipschitz homeomorphism lifts to covers, inducing an equivariant quasi-isometry.  Our first main result in this paper is

\begin{theorem}\label{singly degenerate}
If $\rho_1$ and $\rho_2$ are discrete, faithful, without parabolics, and their quotient manifolds $M_{\rho_i}$ are singly degenerate and share one geometrically infinite end invariant, then $\rho_1^*\Vol = \rho_2^*\Vol\in \Hb_{\bb}^3(\pi_1(S);\bR)$.
\end{theorem}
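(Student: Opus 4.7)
The plan is to exhibit an explicit bounded $\pi_1(S)$-equivariant $2$-cochain $\beta$ whose coboundary is the difference of the two natural cocycle representatives of $\rho_1^*\Vol$ and $\rho_2^*\Vol$ in the bar complex. The hypothesis, together with the preceding discussion of the Ending Lamination Theorem, furnishes a $(\rho_1,\rho_2)$-equivariant bi-Lipschitz map $\tilde f: \bH^3 \to \bH^3$ lifting the bi-Lipschitz homeomorphism $M_{\rho_1}\to M_{\rho_2}$ in the preferred homotopy class. Fix a basepoint $x_0 \in \bH^3$ and set $y_0 = \tilde f(x_0)$. Then $\rho_i^*\Vol$ is represented by the bounded cocycle
$$c_i(g_0,g_1,g_2,g_3) = \Vol\bigl(\str(\rho_i(g_0)z_i, \rho_i(g_1)z_i, \rho_i(g_2)z_i, \rho_i(g_3)z_i)\bigr),$$
with $z_1=x_0$ and $z_2=y_0$. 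By equivariance of $\tilde f$ one has $\rho_2(g)y_0 = \tilde f(\rho_1(g)x_0)$, so $c_2$ is computed entirely from the $\tilde f$-image of the $\rho_1$-orbit of $x_0$.

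To construct $\beta$, for each triple $(g_0,g_1,g_2)$ I would build the geodesic prism in $\bH^3$ whose top face is the straightened triangle on $\{\rho_1(g_i)x_0\}$, whose bottom face is the straightened triangle on $\{\tilde f(\rho_1(g_i)x_0)\}$, and whose vertical edges are the three geodesic segments joining $\rho_1(g_i)x_0$ to $\tilde f(\rho_1(g_i)x_0)$. Decomposing this prism into three straightened tetrahedra in the standard way and taking $\beta(g_0,g_1,g_2)$ to be the alternating sum of their volumes, a Stokes-type argument on the $3$-chain boundary of the prism yields $\delta\beta = c_1 - c_2$. Boundedness $|\beta|\le 3v_3$ is immediate from the classical estimate that every straightened geodesic tetrahedron in $\bH^3$ has volume at most $v_3$, the volume of the regular ideal tetrahedron.

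The main obstacle---where I expect the real work to lie---is $\pi_1(S)$-equivariance of $\beta$. Under left translation by $h\in\pi_1(S)$, the top vertices of the prism transform by $\rho_1(h)$ while the bottom vertices transform by $\rho_2(h)$; since these isometries generally differ, the translated prism is not isometric to the original, and $\beta$ as naively defined fails to be equivariant. Overcoming this requires exploiting that $\rho_1$ and $\rho_2$ asymptotically agree at the shared degenerate end, where they induce compatible Cannon--Thurston maps to the sphere at infinity. Concretely, I would try to absorb the equivariance defect---itself a bounded cochain---into a further bounded coboundary by iterating the prism construction along a pleated-surface exhaustion of the degenerate end, where the displacement of $\tilde f$ is controlled. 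A cleaner alternative may be to sidestep the bar complex altogether and work in the singular bounded complex of $M_{\rho_1}\simeq K(\pi_1(S),1)$: represent $\rho_1^*\Vol$ by straightening simplices in $M_{\rho_1}$, represent $\rho_2^*\Vol$ by pulling back via $f$, and construct the prism coboundary locally using the bi-Lipschitz structure of $f$, thereby bypassing equivariance entirely.
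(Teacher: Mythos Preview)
Your diagnosis of the equivariance failure in the bar-complex prism is correct, but the proposed escape route has the same defect.  Working in the singular complex of $M_{\rho_1}$ does not ``bypass equivariance entirely'': for $C(\tau)$ to be a cochain on $M_{\rho_1}$ it must be independent of the chosen lift $\tilde\tau$, and the prism between the geodesic triangle on the vertices $a_i$ of $\tilde\tau$ and the geodesic triangle on $\tilde f(a_i)$ transforms under a deck change $\gamma\in\rho_1(\pi_1(S))$ exactly as before---the top moves by $\gamma$ while the bottom moves by $\rho_2\rho_1^{-1}(\gamma)$.  So the singular-complex version of your prism is not well defined downstairs either.  The first alternative (absorbing the defect by iterating toward the degenerate end) is not a proof sketch; it names a hope rather than a mechanism.

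The paper's resolution is to invoke the Geometric Inflexibility Theorem of Brock--Bromberg (building on Reimann and McMullen), which upgrades the bi-Lipschitz map $\Phi\colon M_0\to M_1$ to a \emph{volume-preserving} bi-Lipschitz diffeomorphism.  Volume preservation gives $\int_{\str_0\sigma}\omega_0=\int_{\Phi_*\str_0\sigma}\omega_1$, so the two cocycles can be compared entirely inside $M_1$: one builds a homotopy $H(\tau)$ between $\Phi_*\str_0\tau$ and $\str_1\Phi_*\tau$, two maps into $M_1$ with the \emph{same} vertex set $\Phi(v_i)$.  This homotopy is manifestly independent of lifts (a deck transformation in $\Gamma_1$ acts on both ends simultaneously), so the resulting $2$-cochain $C(\tau)=\int H(\tau)^*\omega_1$ is well defined.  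Boundedness of $C$ is then the genuine work---it is \emph{not} $3v_3$, because $H(\tau)$ is not a union of three geodesic tetrahedra; rather one estimates trajectory lengths via the Morse Lemma and level-triangle areas via the bi-Lipschitz constant to obtain a bound of the form $\pi K^2 c(K)^3$.  You should look for the volume-preserving refinement of the bi-Lipschitz map; without it the argument does not get off the ground.
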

We emphasize that Theorem \ref{singly degenerate} holds even for manifolds with unbounded geometry.  In Sections \ref{compactly supported classes}-\ref{zero out half the manifold}, we restrict ourselves to the setting of manifolds with \emph{bounded geometry}.  A manifold $M$ has bounded geometry if its injectivity radius $\inj(M)$ is  positive.  That is, there is no sequence of essential closed curves whose lengths tend to $0$.  An ending lamination $\lambda\in \cEL(S)$ has \emph{bounded geometry} if any singly degenerate manifold with $\lambda$ as an end invariant has bounded geometry.  Note that if some singly degenerate manifold with geometrically infinite end invariant $\lambda$ and no parabolic cusps has bounded geometry, then every singly degenerate manifold with $\lambda$ as an end invariant and no cusps has bounded geometry \cite[Bounded geometry theorem]{Minsky:bounded_geometry}.  This is closely related to the fact that any Teichm\"uller geodesic ray tending toward a bounded geometry ending lamination $\lambda$ stays in a compact set \cite{Rafi:bdd}.  Let $\cEL_\text{b}(S)\subset \cEL(S)$ be the laminations that have bounded geometry.  Let $\lambda, \lambda' \in \cEL_{\text b}(S)$ and $X, Y\in \teich (S)$, then we have representations $\rho_{(\nu_{\scriptscriptstyle-},\nu_+)}: \pi_1(S)\to \G$ with end invariants $\nu_{\scriptscriptstyle-}, \nu_+ \in\{\lambda, \lambda', X, Y\}$.  Let $\widehat{\omega}({\nu_{\scriptscriptstyle-}},{\nu_+}) \in \Cb_{\bb}^3(S;\bR)$ be the corresponding bounded volume $3$-cocycle (see Section \ref{bounded fundamental class}).  
In Section \ref{relations and banach subspace} we prove our main results relating doubly degenerate bounded classes to each other.  The following theorem states that the doubly degenerate bounded classes decompose into a sum of singly degenerate bounded classes.  
\begin{theorem}\label{relation 1}
Let $\lambda, \lambda' \in \cEL_{\text b}(S)$ and $X, Y\in \teich (S)$ be arbitrary.  We have an equality in bounded cohomology \[[\widehat{\omega}(\lambda', \lambda)]= [\widehat{\omega}(\lambda',X)]+[\widehat{\omega}(Y, \lambda)] \in \Hb^3_{\bb}(S;\bR).\]
\end{theorem}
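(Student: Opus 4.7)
The plan is to use the ``zero out half the manifold'' construction of Section \ref{zero out half the manifold} to decompose each singly degenerate cocycle as, up to bounded coboundary, a cocycle concentrated on the degenerate side of a middle surface, and then glue the two pieces to reconstruct the doubly degenerate cocycle modulo a compactly supported error that vanishes by Section \ref{compactly supported classes}.

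By Theorem \ref{singly degenerate}, the classes $[\widehat{\omega}(\lambda',X)]$ and $[\widehat{\omega}(Y,\lambda)]$ do not depend on $X$ or $Y$, so I am free to choose them compatibly with the geometric setup. Since $\lambda,\lambda'\in\cEL_{\text b}(S)$, the manifold $M_{(\lambda',\lambda)}$ has bounded geometry, and I fix an embedded level surface $\Sigma\subset M_{(\lambda',\lambda)}$ in the preferred homotopy class separating the degenerate ends into a $\lambda'$-side $U^+$ and a $\lambda$-side $U^-$. The bi-Lipschitz models underlying the proof of Theorem \ref{singly degenerate} provide identifications, in the preferred homotopy class, of $U^+$ with the degenerate half of $M_{(\lambda',X)}$ and of $U^-$ with the degenerate half of $M_{(Y,\lambda)}$, for suitable choices of $X$ and $Y$.

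I then apply the zero-out construction: modify $\widehat{\omega}(\lambda',X)$ by a bounded coboundary to a cocycle $\widehat{\omega}^+$ vanishing on equivariant simplices whose straightened image lies on the geometrically finite side of the middle surface in $M_{(\lambda',X)}$, and analogously produce $\widehat{\omega}^-$ from $\widehat{\omega}(Y,\lambda)$. Under the bi-Lipschitz identifications above, $\widehat{\omega}^\pm$ agree with the values of $\widehat{\omega}(\lambda',\lambda)$ on equivariant simplices straightening into $U^\pm$, up to an error supported on simplices crossing $\Sigma$. The remainder $\widehat{\omega}(\lambda',\lambda) - \widehat{\omega}^+ - \widehat{\omega}^-$ is therefore a cocycle whose support is concentrated in a bounded neighborhood of $\Sigma$, hence compactly supported in the sense of Section \ref{compactly supported classes}; by the results there, it is a bounded coboundary, which yields the claimed equality.

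The main obstacle will be producing the bounded primitive required for the zero-out step. A naive cutoff is unbounded because the volume cocycle evaluates on straightened tetrahedra of arbitrarily large diameter that may straddle the middle surface in a geometrically uncontrolled way. Both hypotheses matter here: geometric finiteness of the non-degenerate end of each singly degenerate manifold supplies a bounded primitive for the volume cocycle on the convex-core-exterior of that end, while bounded geometry of the degenerate ends keeps the middle surfaces uniform, so that the crossing contributions can be absorbed uniformly. A secondary subtlety is aligning the three middle surfaces compatibly so that the bi-Lipschitz identifications introduce no further uncontrolled corrections near $\Sigma$; Theorem \ref{singly degenerate} is precisely what allows this alignment, since it lets me adjust $X$ and $Y$ without changing the bounded classes.
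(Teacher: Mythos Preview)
Your outline is morally the paper's approach, but the paper runs it in the opposite direction and your version has some technical slippage that matters.

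The paper decomposes $1=f_-+f_+$ on the doubly degenerate manifold $M_{(\lambda',\lambda)}$ as a sum of smooth nonnegative functions with precompact common support, so that $[\widehat{\omega}(\lambda',\lambda)] = [\widehat{f_-\omega}]+[\widehat{f_+\omega}]$ by linearity of the integral. Proposition~\ref{zero out half} then identifies each $[\widehat{f_\pm\omega}]$ with a singly degenerate class (for a specific Teichm\"uller end invariant on the geodesic between $\lambda'$ and $\lambda$), and Theorem~\ref{singly degenerate} replaces that invariant by an arbitrary $X$ or $Y$. Decomposing first and recognizing the pieces is cleaner than your direction, because it avoids ``gluing'' cocycles that live on different manifolds.

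Two points in your write-up need correction. First, the bi-Lipschitz identifications you invoke do not come from the proof of Theorem~\ref{singly degenerate}; they are the volume-preserving limit maps of Section~\ref{volume preserving limit maps}, and the volume-preserving property (away from a compact set) is essential---bi-Lipschitz alone does not let you compare the integrals. Second, the claim that under these identifications ``$\widehat{\omega}^\pm$ agree with the values of $\widehat{\omega}(\lambda',\lambda)$ on simplices straightening into $U^\pm$'' is false as stated: straightening in $M_{(\lambda',X)}$ and straightening in $M_{(\lambda',\lambda)}$ are different operations, so $\Phi_*\str_-\sigma \neq \str\,\Phi_*\sigma$. Showing their difference is a bounded coboundary on the support of $f_-$ is exactly the content of Proposition~\ref{zero out half}, and that proof rests on the ladder machinery (Theorem~\ref{almost bounded distance}) establishing that the two straightenings fellow-travel where $f_-$ is nonzero. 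Your diagnosis of the ``main obstacle'' as finding a bounded primitive on the convex-core exterior misses this: the exterior is the easy part; the hard part is controlling simplices whose $M_{(\lambda',\lambda)}$-straightening wanders deep into the end you are discarding while the $M_{(\lambda',X)}$-straightening is pinned near the core boundary.
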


We can think of the singly degenerate bounded classes as ``atomic.''  If we cut a doubly degenerate manifold with end invariants $(\lambda', \lambda)$ along an embedded surface, we are left with two manifolds, each of which is bi-Lipschitz equivalent to the convex core of a hyperbolic manifold with end invariants $(\lambda', X)$ or $(X, \lambda)$.  This gives a geometric explanation for Theorem \ref{relation 1}, once we establish that bounded cohomology  ignores bounded geometric perturbations.   As a corollary, we see that the ``cohomological shadows'' of geometrically infinite ends vanish under addition in $\Hb_{\bb}^3(S;\bR)$.
\begin{cor}\label{main cor}
Suppose $\lambda_1, \lambda_2, \lambda_3 \in \cEL_{\text b}(S)$ are distinct. Then we have an equality in bounded cohomology \[ [\widehat{\omega}(\lambda_1, \lambda_2)]+ [\widehat{\omega}(\lambda_2,\lambda_3 )] = [\widehat{\omega}(\lambda_1, \lambda_3)] \in \Hb^3_{\bb}(S;\bR).\]
\end{cor}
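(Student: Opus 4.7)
The plan is to apply Theorem~\ref{relation 1} to each of the three doubly degenerate classes with a common geometrically finite intermediate point in $\teich(S)$, and to obtain the desired cancellation from an orientation-reversing symmetry of the bounded volume class.

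Concretely, fix any $X \in \teich(S)$ and apply Theorem~\ref{relation 1} with $Y = X$ to each of the pairs $(\lambda_1,\lambda_2)$, $(\lambda_2,\lambda_3)$, and $(\lambda_1,\lambda_3)$. This yields
\begin{align*}
[\widehat{\omega}(\lambda_1,\lambda_2)] &= [\widehat{\omega}(\lambda_1,X)] + [\widehat{\omega}(X,\lambda_2)],\\
[\widehat{\omega}(\lambda_2,\lambda_3)] &= [\widehat{\omega}(\lambda_2,X)] + [\widehat{\omega}(X,\lambda_3)],\\
[\widehat{\omega}(\lambda_1,\lambda_3)] &= [\widehat{\omega}(\lambda_1,X)] + [\widehat{\omega}(X,\lambda_3)].
\end{align*}
Subtracting the third equation from the sum of the first two reduces the corollary to the single identity
\[
[\widehat{\omega}(X,\lambda_2)] + [\widehat{\omega}(\lambda_2,X)] = 0 \in \Hb^3_{\bb}(S;\bR).
\]

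The main obstacle is verifying this cancellation, which I expect to follow from an orientation-reversing symmetry inherent to the construction. The representations $\rho_{(X,\lambda_2)}$ and $\rho_{(\lambda_2,X)}$ produce the same underlying unoriented hyperbolic $3$-manifold, differing only in which end of $S\times\bR$ is designated the geometrically finite one. Such an exchange is realized by an orientation-reversing self-homeomorphism of $S\times\bR$ (for instance $(x,t)\mapsto(x,-t)$), or equivalently by post-composing the representation with complex conjugation on $\PSL_2\bC$. Since the volume class $\Vol$ is built from signed volumes of straightened ideal tetrahedra in $\bH^3$, it is odd under orientation reversal. Combined with Theorem~\ref{singly degenerate}, which guarantees that a singly degenerate bounded class depends only on the ending lamination and not on the geometrically finite end invariant, this yields $[\widehat{\omega}(X,\lambda_2)] = -[\widehat{\omega}(\lambda_2,X)]$, completing the proof. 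The only routine task remaining is to formalize the orientation-reversal identity at the level of the cocycle $\widehat{\omega}$ using the definitions recalled in Sections~\ref{bounded spaces}--\ref{bounded fundamental class}.
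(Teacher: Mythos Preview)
Your proof is correct and follows essentially the same route as the paper: both decompose the doubly degenerate classes via Theorem~\ref{relation 1} at a common $X\in\teich(S)$ and then cancel $[\widehat{\omega}(X,\lambda_2)]+[\widehat{\omega}(\lambda_2,X)]$ using the orientation-reversing isometry $M_{(X,\lambda_2)}\to M_{(\lambda_2,X)}$. Your invocation of Theorem~\ref{singly degenerate} in the cancellation step is unnecessary---the orientation-reversing isometry already respects markings and gives the sign change directly---but this does not affect the validity of the argument.
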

We now state some consequences of the results of this paper and the classification theory for finitely generated Kleinian groups.  To do so, we restate results of previous work of the author in the case of marked Kleinian surface groups.
\begin{theorem}[{\cite[Theorem 6.2 and Theorem 7.7]{Farre:bdd}}]\label{old work}
Fix a closed, orientable surface $S$ of negative Euler characteristic.  There is an $\epsilon_S>0$ such that if $\{[\rho_\alpha]\}_{\alpha\in \Lambda}\subset \Hom(\pi_1(S), \G)/\G$ are discrete, faithful, and without parabolics such that at least one of the geometrically infinite end invariants of $\rho_\alpha$ is different from the geometrically infinite end invariants of $\rho_\beta$ for all $\beta\not= \alpha \in \Lambda$, then 
\begin{enumerate}
\item $\{\rho_\alpha^*\Vol\}_{\alpha\in \Lambda}\subset \Hb_{\bb}^3(\pi_1(S);\bR)$ is a linearly independent set.  
\item $\|\sum_{i = 1}^N a_i\rho_{\alpha_i}^*\Vol\|_\infty \ge \epsilon_S\max{|a_i|}$.
\end{enumerate}
\end{theorem}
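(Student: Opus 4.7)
The plan is to establish both conclusions simultaneously by exhibiting, for each $\alpha$, an $\ell^1$-cycle that detects the distinguishing geometrically infinite end of $\rho_\alpha$ while being asymptotically blind to $\rho_\beta^*\Vol$ for $\beta \neq \alpha$. Duality between bounded cohomology and $\ell^1$-homology then converts a lower bound on the pairing of this probe with $\rho_\alpha^*\Vol$, together with a uniform control on the $\ell^1$-norm of the probe, into the estimate $\|\sum a_i\rho_{\alpha_i}^*\Vol\|_\infty \ge \epsilon_S \max_i|a_i|$, from which part $(1)$ is immediate.

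The probes come from pleated surface sweep-outs exiting the geometrically infinite end of $M_{\rho_\alpha}$ associated to the distinguished lamination $\lambda_\alpha$. The region $R_n^\alpha\subset M_{\rho_\alpha}$ bounded between the $n$-th and $(n{+}1)$-st pleated surface, straightened to a singular $3$-chain $c_n^\alpha$, has $\ell^1$-norm controlled by $|\chi(S)|$ via the pleated surface area bound, while its pairing with $\rho_\alpha^*\Vol$ is approximately $\vol(R_n^\alpha)$, bounded below by a uniform constant $v_S > 0$ depending only on $S$. Because $\lambda_\alpha$ is not an ending lamination of $M_{\rho_\beta}$ for any $\beta\ne\alpha$, the marking transports the lifts of $c_n^\alpha$ so that they remain in a uniformly compact piece of $\widetilde{M_{\rho_\beta}}$ (up to controlled error), so the pairings $\langle \rho_\beta^*\Vol, c_n^\alpha\rangle$ stay bounded as $n \rightarrow \infty$. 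Averaging the $c_n^\alpha$ over $n$ annihilates bounded coboundary ambiguity and isolates $\rho_\alpha^*\Vol$ with a pairing of size at least $v_S$, while the other pairings tend to $0$.

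The main obstacle is promoting this asymptotic picture to a uniform quantitative bound in which $\epsilon_S$ depends only on $S$. Two issues intervene. First, when $M_{\rho_\alpha}$ has unbounded geometry the pleated surfaces can become arbitrarily thin and the naive lower bound on $\vol(R_n^\alpha)$ can fail; the remedy is to exchange pleated surfaces for the Brock--Canary--Minsky model manifold and exhaust the end by blocks of bounded topological complexity that nevertheless carry definite volume. Second, the fact that distinct ending laminations force the corresponding model regions in the different $M_{\rho_\beta}$ to be eventually disjoint (in the universal cover, up to the marking) must be upgraded to a quantitative separation statement uniform over the possibly infinite index set $\Lambda$, so that the linear-combination bound for all $a_i$ holds with a single constant. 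Once these model-level estimates are in place, the pairing with a normalized probe $\tfrac{1}{N}\sum_{n=1}^N c_n^{\alpha_j}$ delivers the claimed inequality by letting $N$ be large.
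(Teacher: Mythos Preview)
This theorem is not proved in the present paper; it is quoted verbatim from the author's earlier work \cite[Theorem~6.2 and Theorem~7.7]{Farre:bdd} and used here only as a black box (for instance in the proof of Theorem~\ref{infinity norm} and in the deduction of Corollary~\ref{qi_invariance}). There is therefore no proof in this paper to compare your proposal against.

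That said, your outline is broadly faithful to the strategy of the cited paper: one constructs, for each $\alpha$, a sequence of $\ell^1$-chains exiting the distinguished geometrically infinite end of $M_{\rho_\alpha}$, with $\ell^1$-norm controlled by the topology of $S$ and with pairing against $\widehat\omega_\alpha$ bounded below by a definite volume; one then shows that the same chains, transported via the markings to the other $M_{\rho_\beta}$, have pairings with $\widehat\omega_\beta$ that are $o(N)$ after summing $N$ of them, so that a Ces\`aro average isolates the $\alpha$-term. Your identification of the two genuine difficulties---controlling the geometry of the exiting chains when $M_{\rho_\alpha}$ has unbounded geometry, and upgrading ``$\lambda_\alpha$ is not an end invariant of $M_{\rho_\beta}$'' to a quantitative statement giving a constant depending only on $S$---is accurate, and these are exactly where the work in \cite{Farre:bdd} lies. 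As written, however, your proposal is an outline rather than a proof: the sentences ``the remedy is to exchange pleated surfaces for the Brock--Canary--Minsky model manifold'' and ``must be upgraded to a quantitative separation statement'' name the obstacles without resolving them, and the final ``once these model-level estimates are in place'' is precisely the content of the cited theorems.
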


Say that $\rho_1, \rho_2 : \g \to \G$ are \emph{quasi-isometric} if there exists a $(\rho_1, \rho_2)$-equivariant quasi-isometry $\bH^3\to \bH^3$.  Combining Theorems \ref{singly degenerate} and \ref{old work} with the Ending Lamination Theorem (Theorem \ref{ELT}) and the quasi-conformal deformation theory (see the discussion following Theorem \ref{main}), we obtain
\begin{cor}\label{qi_invariance}
The bounded fundamental class is a quasi-isometry invariant of discrete and faithful representations of $\pi_1(S)$ without parabolics.  In this setting, $\|\rho_0^*\Vol - \rho_1^*\Vol\|_\infty<\epsilon_S$ if an only if $\rho_0$ is quasi-isometric to $\rho_1$.  
\end{cor}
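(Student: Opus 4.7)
The plan is to combine Theorem \ref{singly degenerate} and Theorem \ref{relation 1} with the quantitative linear independence of Theorem \ref{old work}, using the Ending Lamination Theorem (ELT) and Sullivan's quasi-conformal (QC) deformation theory to translate between ``quasi-isometric'' and ``sharing geometrically infinite end invariants''.

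For the QI-invariance of $\rho^*\Vol$ (which gives the first sentence and the ``if'' direction of the second), suppose $\rho_0$ and $\rho_1$ are quasi-isometric. An equivariant QI $\bH^3 \to \bH^3$ descends to a QI of quotient manifolds that carries each geometrically infinite end to a geometrically infinite end with the same ending lamination (via the ELT and the large-scale description of degenerate ends). Hence $\rho_0$ and $\rho_1$ share their ordered collections of geometrically infinite end invariants, and a case analysis on end type finishes: in the doubly degenerate case the ELT yields a $\G$-conjugacy and the pullbacks are literally equal; in the singly degenerate case Theorem \ref{singly degenerate} applies directly; and in the convex cocompact case both representations lie in the connected quasi-Fuchsian space and are QC-conjugate, which restricts to a bi-Lipschitz conjugacy of convex cores preserving the bounded volume class. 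In every case $\rho_0^*\Vol = \rho_1^*\Vol$, and in particular $\|\rho_0^*\Vol - \rho_1^*\Vol\|_\infty = 0 < \epsilon_S$.

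For the ``only if'' direction, assume $\|\rho_0^*\Vol - \rho_1^*\Vol\|_\infty < \epsilon_S$. When both $\rho_i$ carry at least one geometrically infinite end invariant, Theorem \ref{old work}(2) applied to $\{\rho_0, \rho_1\}$ with coefficients $(1,-1)$ forces, contrapositively, the sets $X_i$ of geometrically infinite end invariants of $\rho_i$ to be comparable. Strict containment is ruled out by the following reduction: expand the class with the larger set via Theorem \ref{relation 1} into a sum of singly degenerate classes, cancel one of them against the smaller class by Theorem \ref{singly degenerate}, and invoke Theorem \ref{old work}(2) with $N=1$ on the remaining singly degenerate class, whose lamination lies in the symmetric difference, to produce norm $\ge \epsilon_S$, a contradiction. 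Thus $X_0 = X_1$, and the ELT combined with QC deformation theory at the geometrically finite ends supplies the bi-Lipschitz conjugacy and hence the QI. The cases when one or both $\rho_i$ are convex cocompact follow the same pattern, using the connectedness of the convex cocompact QC deformation space to absorb the cocompact contribution into the Teichm\"uller slots of Theorem \ref{relation 1}.

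The principal obstacle I anticipate is the convex cocompact sub-case: Theorem \ref{old work} takes geometrically infinite end invariants as its handle, so it does not directly separate a purely geometrically finite class from a degenerate one. One must decompose the degenerate class via Theorem \ref{relation 1} into singly degenerate atoms and show that the convex cocompact class can be cohomologously shifted into the Teichm\"uller slots of those atoms by a bi-Lipschitz deformation, reducing to the singly degenerate comparison already in hand. Modulo this bookkeeping, the proof is a routine assemblage of the named theorems, the ELT, and QC deformation theory.
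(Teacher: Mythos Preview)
Your overall architecture (ELT + QC deformation to translate between ``quasi-isometric'' and ``same geometrically infinite end invariants'', then Theorems~\ref{singly degenerate} and~\ref{old work} to compare bounded classes) matches the paper's. The ``if'' direction is fine.

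The genuine gap is your use of Theorem~\ref{relation 1} in the ``only if'' direction---both in the strict containment reduction and in your proposed handling of the convex cocompact case. Theorem~\ref{relation 1} is proved only for $\lambda,\lambda'\in\cEL_{\bb}(S)$, i.e.\ under a bounded geometry hypothesis, whereas Corollary~\ref{qi_invariance} is stated for \emph{all} discrete, faithful representations without parabolics. So as written your argument establishes the corollary only in the bounded geometry case. The paper's own derivation deliberately avoids Theorem~\ref{relation 1} for exactly this reason: it cites only Theorems~\ref{singly degenerate} and~\ref{old work} together with the ELT and QC deformation theory.

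For the convex cocompact case there is a much simpler route that requires no decomposition at all. Any quasi-Fuchsian $\rho$ is a $K$-QC deformation of a Fuchsian $\rho_F$, so Geometric Inflexibility (Theorem~\ref{inflexibility}) plus the argument of Section~\ref{singly degenerate classes} (Lemma~\ref{coboundary lemma} and Proposition~\ref{coboundary bounded}, which use only the existence of a volume-preserving bi-Lipschitz map, not the degenerate end) give $\rho^*\Vol=\rho_F^*\Vol$. But $M_{\rho_F}$ carries an orientation-reversing isometry in the marked homotopy class (reflection across the totally geodesic core surface), forcing $\rho_F^*\Vol=-\rho_F^*\Vol=0$. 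Thus every quasi-Fuchsian class vanishes, and the mixed case ``$\rho_0$ quasi-Fuchsian, $\rho_1$ with a degenerate end'' reduces to $\|\rho_1^*\Vol\|_\infty\ge\epsilon_S$, which is Theorem~\ref{old work}(2) with $N=1$. Similarly, the strict containment case is meant to be absorbed directly by the separation estimate underlying Theorem~\ref{old work}: a degenerate end invariant of $\rho_1$ not shared by $\rho_0$ already yields the $\epsilon_S$ lower bound on $\|\rho_0^*\Vol-\rho_1^*\Vol\|_\infty$, with no appeal to Theorem~\ref{relation 1}.
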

Soma \cite[Theorem A and Corollary B]{Soma:ELT} has recently obtained a version of Corollary \ref{qi_invariance} that does not factor through the curve complex machinery that was used in the proof of the Ending Lamination Theorem and instead relies on the almost rigidity properties of hyperbolic tetrahedra with almost maximal volume.  Subsequently, Soma provides an alternate approach to the Ending Lamination Theorem \cite[Theorem C and Corollary D]{Soma:ELT}.  

It turns out that the seminorm on bounded cohomology can also detect representations with dense image and faithful representations in the following sense:
\begin{theorem}[{\cite[Theorem 1.1]{Farre:dense},\cite[Theorem 1.3]{Farre:bdd}}]\label{d_and_f}
Suppose $\rho_0: \pi_1(S) \to \G$ is discrete, faithful, has no parabolic elements, and at least one geometrically infinite end invariant and let $\rho: \pi_1(S) \to \G$ be arbitrary.  If $\|\rho_0^*\Vol - \rho^*\Vol\|_\infty<\epsilon_S/2$, then $\rho$ is faithful.  If $\rho$ has dense image, then $\|\rho_0 ^*\Vol - \rho^*\Vol\|_\infty \ge v_3$, where $v_3$ is the volume of the regular ideal tetrahedron in $\bH^3$.  
\end{theorem}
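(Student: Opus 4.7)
The theorem consolidates two results established in prior work of the author; I would prove the two conclusions by different strategies, since the faithfulness dichotomy rests on the linear independence result Theorem \ref{old work} while the dense image lower bound calls for a direct construction.

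For the faithfulness half, the plan is to argue contrapositively. Assume $\rho$ is not faithful and pick $\gamma\in\ker\rho\setminus\{e\}$. After acting by a mapping class element, I may assume $\gamma$ (or one of its powers) is represented by a simple closed curve. The goal is then to build an auxiliary discrete, faithful, parabolic-free representation $\rho_1$ with geometrically infinite end invariants distinct from those of $\rho_0$, such that the bounded class $\rho_1^*\Vol$ is boundedly cohomologous to $\rho^*\Vol$ with sup-norm error at most $\epsilon_S/2$. Once $\rho_1$ is in hand, Theorem \ref{old work} applied to $(\rho_0,\rho_1)$ forces $\|\rho_0^*\Vol-\rho_1^*\Vol\|_\infty\ge\epsilon_S$, and the triangle inequality delivers $\|\rho_0^*\Vol-\rho^*\Vol\|_\infty\ge\epsilon_S/2$, as required. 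The main obstruction is producing $\rho_1$: one must realize a non-faithful bounded fundamental class by a discrete faithful one, which I would approach via a limiting argument pinching $\gamma$ in conjunction with the pleated-surface and measured-lamination machinery underlying Theorems \ref{singly degenerate} and \ref{relation 1}, with careful coboundary bookkeeping to avoid losing sup-norm control when passing from $\rho$ through the quotient $\pi_1(S)/\langle\langle\gamma\rangle\rangle$ back to a discrete representation.

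For the dense image half, the plan is to exhibit an explicit $4$-tuple $(g_0,g_1,g_2,g_3)\in\pi_1(S)^4$ on which the difference cocycle evaluates to approximately $v_3$. Fix a regular ideal tetrahedron $T\subset\bH^3$ with vertices $p_0,\dots,p_3\in\partial\bH^3$ and a basepoint $x_0$. Density of $\rho(\pi_1(S))$ in $\G$ lets me select $g_i\in\pi_1(S)$ so that $\rho(g_i)x_0$ approaches $p_i$ in the visual metric, forcing the straightened volume of $(\rho(g_0)x_0,\dots,\rho(g_3)x_0)$ to tend to $v_3$. It then remains to simultaneously arrange that the straightened $\rho_0$-tetrahedron has small volume. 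Here the geometrically infinite end invariant of $\rho_0$ is essential: pleated surfaces exiting the end supply asymptotically flat $2$-dimensional loci in $\bH^3$ along which the four $\rho_0$-vertices can be made nearly coplanar, flattening the tetrahedron. The main difficulty is the simultaneous choice of the $g_i$; since the density condition on the $\rho$-orbit is open on $4$-tuples in $\G^4$ and the coplanarity constraint for $\rho_0$ persists under perturbation along the pleated locus, a Baire-category or iterative refinement argument in $\G^4$ should produce the required $g_i$. The $v_3$ threshold is sharp and reflects the volume of the regular ideal tetrahedron, so no triangle-inequality loss occurs in this direction.
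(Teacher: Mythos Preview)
This theorem is not proved in the present paper; it is quoted from the author's earlier work \cite{Farre:dense} and \cite{Farre:bdd}. So there is no proof here to compare against, and your proposal must be evaluated on its own merits.

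Both halves of your sketch contain genuine gaps.

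\textbf{Faithfulness.} Your plan hinges on producing a discrete, faithful, parabolic-free $\rho_1$ with geometrically infinite end invariants different from those of $\rho_0$ and with $\|\rho_1^*\Vol-\rho^*\Vol\|_\infty\le\epsilon_S/2$. You give no mechanism for this. If $\gamma\in\ker\rho$, then $\rho$ factors through the quotient $\pi_1(S)/\langle\!\langle\gamma\rangle\!\rangle$, and there is no reason to expect any discrete faithful surface group representation to lie within $\epsilon_S/2$ of the pullback of $\Vol$ through that quotient; indeed, the whole point of the theorem is that this cannot happen. The ``pinching $\gamma$'' idea produces manifolds with cusps or with accidental parabolics, not the clean parabolic-free $\rho_1$ you need, and the coboundary bookkeeping you allude to is exactly the hard part. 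The argument in \cite{Farre:bdd} instead works directly with the geometry of the degenerate end of $M_{\rho_0}$: one builds efficient chains supported deep in the end on which $\rho_0^*\Vol$ is nearly extremal, and shows that any $\rho$ with an element of the kernel represented by a curve that the chain crosses repeatedly must lose a definite amount of volume there.

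\textbf{Dense image.} Exhibiting a single $4$-tuple on which the difference \emph{cocycle} $\rho_0^*\vol_x-\rho^*\vol_x$ is close to $v_3$ bounds only the cochain norm, not the seminorm of the cohomology class. The seminorm is an infimum over all bounded coboundaries $d\beta$, and for any fixed tuple one can add a coboundary that shifts the value arbitrarily. A correct lower bound on $\|\cdot\|_\infty$ requires either pairing with a sequence of cycles whose $\ell^1$-norms are controlled (so that coboundaries integrate to zero), or an equidistribution/averaging argument that kills $d\beta$ in the limit. Your Baire-category refinement does nothing to address this. The argument in \cite{Farre:dense} uses that density allows one to approximate \emph{chains} of nearly regular ideal tetrahedra in $M_\rho$ while the corresponding $\rho_0$-chains are forced, by the structure of the degenerate end, to have small signed volume; the chain structure is what neutralises the coboundary ambiguity.
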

We then obtain the following rigidity theorem as a corollary of Theorem \ref{d_and_f} and Corollary \ref{qi_invariance}.  
\begin{cor}
Suppose $\rho_0: \pi_1(S)\to \G$ is discrete and faithful with no parabolics and at least one geometrically infinite end invariant and $\rho_1: \pi_1(S)\to \G$ has no parabolics (but is otherwise arbitrary).  If $\|\rho_0^*\Vol - \rho_1^*\Vol\|_\infty <\epsilon_S/2$, then $\rho_1$ is discrete and faithful, hence quasi-isometric to $\rho_0$.  
\end{cor}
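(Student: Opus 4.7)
The plan is to deduce the corollary in three steps from Theorem~\ref{d_and_f} and Corollary~\ref{qi_invariance}. First, the hypotheses on $\rho_0$ together with the bound $\|\rho_0^*\Vol - \rho_1^*\Vol\|_\infty < \epsilon_S/2$ match exactly those of the first conclusion of Theorem~\ref{d_and_f}, immediately yielding that $\rho_1$ is faithful.

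The substance of the argument is to upgrade ``faithful'' to ``discrete,'' which I would prove by contradiction. Assume $\rho_1$ is faithful and parabolic-free but not discrete, and analyze the closure $H = \overline{\rho_1(\pi_1(S))}\subset \G$, whose identity component $H^\circ$ has positive dimension. If $H = \G$ then $\rho_1$ has dense image, and the second conclusion of Theorem~\ref{d_and_f} forces $\|\rho_0^*\Vol - \rho_1^*\Vol\|_\infty \ge v_3 > \epsilon_S/2$, a contradiction. Otherwise, the classification of proper positive-dimensional closed connected subgroups of $\PSL_2\bC$ (up to conjugation) places $H^\circ$ in one of three regimes: (i) $H^\circ$ is solvable and contained in a Borel subgroup, stabilizing a point of $\partial \bH^3$; (ii) $H^\circ$ is compact, conjugate into $\mathrm{PSU}(2)$, and fixes a point of $\bH^3$; or (iii) $H^\circ$ is conjugate into $\PSL_2\bR$ and preserves a totally geodesic plane in $\bH^3$. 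Passing to the finite-index subgroup $K = \rho_1^{-1}(H^\circ)$ of $\pi_1(S)$, which is itself a non-abelian surface group, I would handle each case as follows. In case (i), all commutators in a Borel subgroup are unipotent, so every commutator in $\rho_1(K)$ is trivial or parabolic; by the no-parabolic hypothesis all commutators must be trivial, so $\rho_1(K)$ is abelian, contradicting the faithfulness of $\rho_1|_K$. In cases (ii) and (iii), straightening the volume cocycle at the $H^\circ$-fixed point (respectively, at any point of the invariant totally geodesic plane) makes every straightened tetrahedron degenerate, so $(\rho_1|_K)^*\Vol = 0$ at the cocycle level. By injectivity of the restriction map on bounded cohomology for finite-index subgroups, $\rho_1^*\Vol = 0$ in $\Hb_{\bb}^3(\pi_1(S);\bR)$ as well, and then
\[
\|\rho_0^*\Vol - \rho_1^*\Vol\|_\infty \;=\; \|\rho_0^*\Vol\|_\infty \;\ge\; \epsilon_S
\]
by Theorem~\ref{old work}(2), contradicting the standing hypothesis.

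Once $\rho_1$ is established as discrete and faithful with no parabolics, Corollary~\ref{qi_invariance} applies directly: $\|\rho_0^*\Vol - \rho_1^*\Vol\|_\infty < \epsilon_S/2 < \epsilon_S$ forces $\rho_0$ and $\rho_1$ to be quasi-isometric, completing the proof. The main obstacle is Step~2. Beyond citing the classification of closed subgroups of $\PSL_2\bC$ cleanly, the critical technical point is verifying that the component group $H/H^\circ$ is finite in each regime, so that $K = \rho_1^{-1}(H^\circ)$ really does have finite index in $\pi_1(S)$; this amounts to a check that the normalizers of Borel subgroups, of $\mathrm{PSU}(2)$, and of $\PSL_2\bR$ inside $\G$ are finite extensions of these subgroups, which is standard but deserves careful accounting.
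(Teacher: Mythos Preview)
The paper gives no proof of this corollary beyond the sentence ``We then obtain the following rigidity theorem as a corollary of Theorem~\ref{d_and_f} and Corollary~\ref{qi_invariance},'' so there is nothing to compare against; your proposal is a correct and natural way to fill in the non-trivial step from ``faithful'' to ``discrete.''

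On your flagged technical point: in case~(i) the component group $H/H^\circ$ need not be finite (for example when $H^\circ$ is a one-parameter unipotent subgroup $U$, the quotient $N_{\G}(U)/U$ is a two-dimensional solvable Lie group admitting infinite discrete subgroups), so the normalizer check you outline would actually fail there. But case~(i) can be handled without introducing $K$: a connected solvable $H^\circ\subset\G$ has either one or two fixed points on $\partial\bH^3$, and since $H$ normalizes $H^\circ$ it permutes this fixed set, forcing $H$ into a Borel subgroup or the normalizer of a torus. Either way $H$ is solvable, so $\rho_1(\pi_1(S))$ is solvable, contradicting faithfulness directly (the no-parabolics hypothesis is not even needed here). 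In cases~(ii) and~(iii) your normalizer check does succeed---$N_{\G}(\mathrm{PSU}(2))=\mathrm{PSU}(2)$ and $[N_{\G}(\PSL_2\bR):\PSL_2\bR]\le 2$---and in fact all of $H$, not just $H^\circ$, preserves the relevant fixed point or totally geodesic plane, so $\rho_1^*\vol_x=0$ at the cocycle level for a suitably chosen $x$, making the transfer argument unnecessary.
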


Let $M$ be a topologically tame hyperbolic $3$-manifold with incompressible boundary (that is, the compact $3$-manifold whose interior is homeomorphic to $M$ has incompressible boundary) and no parabolic cusps.  The inclusion of any surface subgroup $i_S: \pi_1(S)\to \pi_1(M) = \g\le \G$ corresponding to an end of $M$  induces a seminorm non-increasing map $i_S^*:\Hb_{\bb}^3(\pi_1(M);\bR) \to \Hb_{\bb}^3(\pi_1(S);\bR)$.  If the end corresponding to $S$ is geometrically infinite and $M$ is not diffeomorphic to $S\times \bR$, by the Covering Theorem, $i_S: \pi_1(S)\to \G$ is a singly degenerate marked Kleinian surface group.  By Theorem \ref{singly degenerate}, $i_S^*\Vol$ identifies the geometrically infinite end invariant of $M_{i_S}$ (equivalently, the end invariant of $M$ corresponding to $S$). Say that a hyperbolic manifold is \emph{totally degenerate} if all of its ends are geometrically infinite.  Applying Waldhausen's Homeomorphism Theorem \cite{Waldhausen:homeo} and the Ending Lamination Theorem \cite{brock-canary-minsky:ELTII}, we have
\begin{theorem}
Suppose $M_0$ and $M_1$ are hyperbolic $3$-manifolds without parabolic cusps with holonomy representations $\rho_i: \pi_1(M_i)\to \G$, $M_1$ is totally degenerate, and $h: M_1\to M_2$ is a homotopy equivalence.  If $M_1$ is topologically tame and has incompressible boundary, then there is an $\epsilon$ depending only on the topology of $M_1$ such that $h$ is homotopic to an isometry if and only if  $\|\rho_0^*\Vol - \rho_1^*\Vol\|_\infty < \epsilon$.
\end{theorem}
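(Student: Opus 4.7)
The ``only if'' direction is immediate: if $h$ is homotopic to an isometry, then $\rho_0\circ h_*$ is $\G$-conjugate to $\rho_1$, so the pulled-back volume classes agree exactly. For the converse, my plan is to localize the hypothesis to each end of $M_1$ by restricting to surface subgroups, use the rigidity machinery already assembled to match end invariants with the corresponding end of $M_0$, and then assemble a global isometry using Waldhausen's Homeomorphism Theorem and the Ending Lamination Theorem.

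Let $\Sigma_1, \ldots, \Sigma_k$ be the boundary components of a compact core of $M_1$; by incompressibility the inclusions $\iota_j\colon \pi_1(\Sigma_j) \hookrightarrow \pi_1(M_1)$ are injective, and by total degeneracy each $\Sigma_j$ corresponds to a geometrically infinite end of $M_1$ with ending lamination $\lambda_j$. Set $\epsilon := \tfrac{1}{2}\min_j \epsilon_{\Sigma_j}$, which depends only on the topology of $M_1$ via Theorem \ref{old work}. Because $\iota_j^*$ is seminorm non-increasing on bounded cohomology, the hypothesis supplies the bound
\[\|\iota_j^*(\rho_0 \circ h_*)^*\Vol - \iota_j^*\rho_1^*\Vol\|_\infty < \epsilon_{\Sigma_j}/2.\]
By the Covering Theorem (as reviewed before the present statement), $\rho_1 \circ \iota_j$ is a singly degenerate Kleinian surface group without parabolics whose geometrically infinite end invariant is $\lambda_j$; the case $M_1 \cong \Sigma\times \bR$ is handled identically but without passing to a proper surface subgroup.

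This inequality feeds directly into the corollary stated just before the theorem: $\rho_0\circ h_*\circ \iota_j$ is discrete and faithful without parabolics and, via Corollary \ref{qi_invariance}, is quasi-isometric to $\rho_1\circ\iota_j$ with identical bounded fundamental class. Theorem \ref{old work} then forces the two singly degenerate representations to share the geometrically infinite end invariant $\lambda_j$, since distinct invariants would make the classes linearly independent. Waldhausen's Homeomorphism Theorem promotes $h$ to a homeomorphism matching compact cores and ends; combined with the invariant-matching at every end, this shows that $M_0$ is also totally degenerate without parabolics and shares the full tuple of ending laminations with $M_1$. The Ending Lamination Theorem then delivers an isometry in the homotopy class of $h$.

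The main obstacle I anticipate is ensuring that the end of $M_0$ identified with $\Sigma_j$ under Waldhausen's homeomorphism really inherits $\lambda_j$ as its end invariant, rather than a distinct geometrically infinite end doing so through a cover. The Covering Theorem together with tameness and incompressibility makes this precise, guaranteeing that surface subgroups arising from ends of the ambient Kleinian group are themselves singly degenerate with matched end invariants. The $S\times\bR$ case avoids the restriction step but must be treated separately, since a single surface subgroup there must record both end invariants at once; Corollary \ref{qi_invariance} applied directly to $\rho_1$ and $\rho_0\circ h_*$ does the job.
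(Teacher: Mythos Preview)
Your argument is correct and follows essentially the same route the paper sketches: restrict the bounded volume class to each peripheral surface subgroup via the seminorm non-increasing pullback $\iota_j^*$, invoke the Covering Theorem so that $\rho_1\circ\iota_j$ is singly degenerate with ending lamination $\lambda_j$, use the rigidity results (Theorem \ref{old work} and Corollary \ref{qi_invariance}) to force $\rho_0\circ h_*\circ\iota_j$ to share that ending lamination, and finish with Waldhausen plus the Ending Lamination Theorem. The paper does not write this out in detail---it simply cites Waldhausen and the ELT after the preparatory paragraph on surface subgroup restrictions---so your write-up is in fact more explicit than the paper's.

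One small point worth tightening: you implicitly use that $M_0$ is also tame with incompressible boundary (so that the Covering Theorem applies on that side and the peripheral subgroup $h_*\iota_j(\pi_1(\Sigma_j))$ really corresponds to an end of $M_0$). This follows because $h_*$ is an isomorphism, tameness is a consequence of finite generation (Agol, Calegari--Gabai), and Waldhausen's theorem transports the incompressible-boundary compact core of $M_1$ to one for $M_0$; you gesture at this in your final paragraph but could say it outright.
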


Apply Theorem \ref{old work} to see that the singly degenerate classes form a linearly independent set, and they are uniformly separated from each other in seminorm.  Fix a base point $X\in \teich(S)$, and define $\iota: \cEL(S) \to \Hb_{\bb}^3(S;\bR)$ by the rule $\iota(\lambda) = [\widehat{\omega}(X, \lambda)]$.  By Theorem \ref{singly degenerate}, $\iota$ does not depend on the choice of $X$, and $\iota$ is mapping class group equivariant.  We summarize here, and elaborate in Section \ref{relations and banach subspace}. 
\begin{theorem}
The image of $\iota$ is a linearly independent set.  Moreover, for all $\lambda, \lambda' \in \cEL(S)$, if $\|\iota(\lambda) - \iota(\lambda')\|_\infty <\epsilon_S$ then $\lambda = \lambda'$.  Finally, $\iota$ is mapping class group equivariant, and $\iota(\cEL(S))$ is a topological basis for the image of its closure in the reduced space $\overline{\Hb}^3_{\bb}(S;\bR)$.  
\end{theorem}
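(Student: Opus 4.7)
My plan is to assemble the theorem from Theorems~\ref{singly degenerate} and~\ref{old work}; each assertion follows directly from one of these together with minor bookkeeping. For any $\lambda \in \cEL(S)$, the representation with end invariants $(X,\lambda)$ has $\lambda$ as its unique geometrically infinite end invariant, so distinct $\lambda, \lambda' \in \cEL(S)$ produce representations whose geometrically infinite end invariants differ. Theorem~\ref{old work}(1) then immediately gives linear independence of $\iota(\cEL(S))$, while Theorem~\ref{old work}(2) applied with $N = 2$ and coefficients $\pm 1$ yields $\|\iota(\lambda) - \iota(\lambda')\|_\infty \ge \epsilon_S$ whenever $\lambda \neq \lambda'$, which is the contrapositive of the $\epsilon_S$-separation claim.

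For mapping class group equivariance, fix $\phi \in \Mod(S)$ and pick $\rho$ with end invariants $(X, \lambda)$, so that $[\rho^{*}\Vol] = \iota(\lambda)$. Then $\rho \circ \phi_{*}^{-1}$ is a discrete, faithful representation without parabolics whose Kleinian surface group has end invariants $(\phi \cdot X, \phi \cdot \lambda)$, and by naturality of pullback its bounded fundamental class equals $(\phi^{-1})^{*} \iota(\lambda)$. Since $\phi \cdot X \in \teich(S)$ and the geometrically infinite end invariant is $\phi \cdot \lambda$, Theorem~\ref{singly degenerate} gives $[\widehat{\omega}(\phi \cdot X, \phi \cdot \lambda)] = [\widehat{\omega}(X, \phi \cdot \lambda)] = \iota(\phi \cdot \lambda)$, establishing equivariance (and incidentally reproving the basepoint-independence of $\iota$).

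For the topological basis statement, apply Theorem~\ref{old work}(2) in its full generality: for any $\lambda_0 \in \cEL(S)$, any finite subset $\{\lambda_1, \ldots, \lambda_N\} \subset \cEL(S) \setminus \{\lambda_0\}$, and any scalars $a_j \in \bR$, one has $\|\iota(\lambda_0) - \sum_{j=1}^{N} a_j \, \iota(\lambda_j)\|_\infty \ge \epsilon_S$. This lower bound passes to norm limits, so $\iota(\lambda_0)$ lies at seminorm distance at least $\epsilon_S$ from the closed linear span of $\iota(\cEL(S)) \setminus \{\iota(\lambda_0)\}$ in $\overline{\Hb}^{3}_{\bb}(S; \bR)$. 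Combined with the tautological density of the span in its own closure, this is precisely the topological basis assertion.

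No step here presents a serious obstacle, as the theorem largely packages previously established machinery together with the new Theorem~\ref{singly degenerate}. The only points requiring care are the interpretation of ``topological basis'' for an uncountable set (intended as a topologically linearly independent total set, rather than a Schauder basis, which is unavailable in cardinality greater than the continuum-separable setting), and the observation that the $\epsilon_S$-separation descends to the reduced space automatically, since it is already strictly positive.
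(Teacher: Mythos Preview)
Your proposal is correct and mirrors the paper's own treatment: the theorem is a summary statement whose ingredients are exactly Theorems~\ref{singly degenerate} and~\ref{old work}, and the paper assembles them in the same way (linear independence and $\epsilon_S$-separation from Theorem~\ref{old work}, basepoint-independence and equivariance from Theorem~\ref{singly degenerate}, and the topological basis property via Theorem~\ref{infinity norm}, whose proof is precisely your finite-sum lower bound passed to limits). Your remark that the positive-distance-from-closed-span formulation is what is actually established, and that this is at least as strong as the paper's countable-sums definition of topological basis, is apt.
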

 
By Corollary \ref{main cor}, we know that the $\bR$-span of $\iota(\cEL_{\bb}(S))$ contains all bounded classes of doubly degenerate manifolds with bounded geometry.  The results of this paper give a complete characterization of the linear dependencies among elements in the closure of this Banach subspace.  Again, see Section \ref{relations and banach subspace}.  

\begin{remark}\label{remark n=4}The bounded fundamental class is a construction that, for surface and free groups, is necessarily uninteresting in dimension $4$ (more generally even dimensions at least $4$).  Let $S$ be a compact, oriented surface and $\rho: \pi_1(S) \to \Isom^+(\bH^n )$ be discrete and faithful with $n\ge 4$ even.  Then \cite{Bowen:cheeger} shows that the hyperbolic $n$-manifold $\bH^n/\im\rho$ has positive Cheeger constant.  Moreover, \cite{Kim-Kim:cheeger} show that positivity of the Cheeger constant is equivalent to the vanishing of the bounded class $\rho^*\Vol_n \in \Hb_{\bb}^n(S;\bR)$.     We arrive at the claim in the beginning of the remark.
\end{remark}

The organization of the paper is as follows.  In Section \ref{background} we review the definitions of (continuous) bounded cohomology of groups and spaces, some terminology from Kleinian groups, the singular Sol metric on the universal bundle over a Teichm\"uller geodesic as a model for bounded geometry manifolds, and notions in coarse geometry.  In Section \ref{singly degenerate classes}, we consider singly degenerate classes and prove Theorem \ref{singly degenerate}.  We exploit Geometric Inflexibility Theorems to obtain volume preserving, bi-Lipchitz maps $\Phi: M_0\to M_1$ between singly degenerate manifolds that share their geometrically infinity end invariant.  Essentially, we use this map to compare $\Phi(\str_0(\tau))$ with $\str_1(\Phi(\tau))$, where $\tau: \Delta_2 \to M_0$.  We find a homotopy between these two maps with bounded volume, which allows us to express the difference in the bounded fundamental classes of the two manifolds as a bounded coboundary.  The proof of Theorem \ref{relation 1} is modeled on the same strategy, but we need to make a few technical detours and assume that our manifolds have bounded geometry.  Namely, we will need to take a limit of bi-Lipschitz, volume preserving maps to obtain a volume preserving map (up to `compact error') from the convex core of a singly degenerate manifold to a doubly degenerate manifold.  
The assumption that our limit manifold has bounded geometry allows us to ensure that the convex core boundaries of manifolds further out in the sequence get (linearly) further away from some fixed reference point.  We use this to get control over the bi-Lipschitz constants of our maps using geometric inflexibility.  Without the bounded geometry assumption, it is possible (generic) that we cannot take bounded quasi-conformal jumps toward some ending lamination $\lambda$ while making uniform progress away from our reference point, because there are large subsurface product regions where distance from the convex core boundary grows only logarithmically instead of linearly.    We extract the volume preserving limit map in Section \ref{volume preserving limit maps}.

Since our map is only volume preserving away from a compact subset of the convex core of our singly degenerate manifold (that is, up to `compact error'), we need to see that bounded cohomology does not witness this compact error.  We consider functions $f: M\to \bR$ that have compact support, and show that when we weight the hyperbolic volume by $f$, that this new bounded  class is indistinguishable from the old in bounded cohomology.  This we consider in Section \ref{compactly supported classes}.  

In Section \ref{ladders}, we take a coarse geometric viewpoint.  We study \emph{hyperbolic ladders} in the singular Sol metric on the universal bundle over a Teichm\"uller geodesic. This viewpoint was inspired by \cite{Mahan:CT-trees}, and we use it to understand the behavior of geodesics straightened with respect to two `nested metrics'.  Essentially, we can choose the geometrically finite end invariant of a singly degenerate manifold so that there is a $B$-bi-Lipschitz embedding of its convex core into the doubly degenerate manifold, allowing us to think of one as a subspace of the other with the path metric.  We will straighten a based geodesic loop with respect to both metrics and use the geometry of ladders to show that the two straightenings coarsely agree, when they can.  That is, the two paths will fellow travel when it is most efficient to travel in the subspace, and when it is not, one geodesic will stay close to the boundary of the subspace while the other finds a shorter path.  We observe that thin triangles mostly track their edges, so to understand where two geodesic triangles live inside our manifolds, it suffices to understand the trajectories of their edges.  We will use these observations to `zero out' half of a doubly degenerate manifold with a smooth bump function (on an entire geometrically infinite end)  and prove that the resulting bounded class is boundedly cohomologous to that of the singly degenerate class in Section \ref{zero out half the manifold}.  We reiterate that the scheme for the proof there is based on that in Section  \ref{singly degenerate classes}.  

Finally, we prove our main results in Section \ref{relations and banach subspace}; they now follow somewhat easily from the work in previous sections.  
Throughout the paper, we reserve the right to use several different notations where convenient to hopefully improve the exposition.
\section*{Acknowledgements}
The author would like to thank the anonymous referee for their careful reading and suggestions to make this manuscript more readable.  I would like to thank Maria Beatrice Pozzetti for many very enjoyable and useful conversations as well as her interest in this work.  I am also very grateful for the very speedy responses from Mahan Mj and for the introduction to hyperbolic ladders that lead to the contents of Section \ref{ladders}.  I would like to thank Mladen Bestvina for `liking questions,' Yair Minsky for valuable conversations that lead to the content of Section \ref{compactly supported classes}, and for Ken Bromberg's patience, availability, and guidance.  Finally, I acknowledge the support of the NSF, in particular, grants DMS-1246989, DMS–1509171, and DMS-1902896.

\section {Background}\label{background}
\subsection{Bounded Cohomology of Spaces}\label{bounded spaces}
Given a connected CW-complex $X$, we define a norm on the singular chain complex of $X$ as follows.  Let $\Sigma_n = \lbrace \sigma: \Delta_n\to X\rbrace$ be the collection of singular $n$-simplices.  Write a simplicial chain $A\in \Cb_n\left( X;\bR\right)$ as an $\bR$-linear combination \[A = \sum \alpha_\sigma\sigma,\] where each ${\sigma\in \Sigma_n}$.  The $1$-norm or \emph{Gromov} norm of $A$ is defined as \[\left\| A \right\|_ 1 = \sum \left| \alpha_\sigma\right|.\]
This  norm promotes the algebraic chain complex $\Cb_\bullet (X ;\bR)$ to a chain complex of normed linear spaces; the boundary operator is a bounded linear operator.  Keeping track of this additional structure, we can take the topological dual chain complex
\[\left(\Cb_\bullet (X;\bR),\partial, \| \cdot  \|_1\right)^*=\left(\Cb^\bullet_{\bb} (X;\bR),d, \| \cdot \|_\infty\right).\]
   The $\infty$-norm is naturally dual to the $1$-norm, so the dual chain complex consists of \emph{bounded} co-chains.  
Define the \emph{bounded cohomology} $\Hb_{\bb}^\bullet (X ;\bR)$ as the (co)-homology of this complex.  For any bounded $n$-co-chain, $\alpha\in \Cb_{\bb}^n(X;\bR)$, we have an equality \[\|\alpha\|_\infty = \sup_{\sigma\in \Sigma_n}\left|\alpha ( \sigma) \right|.\] The $\infty$-norm descends to a pseudo-norm on the level of bounded cohomology.  If $A\in \Hb_{\bb}^n(X;\bR)$ is a bounded class, the seminorm is described by \[\| A \|_\infty = \inf_{\alpha\in A} \|\alpha\|_\infty.\]We direct the reader to \cite{gromov:bdd} for a systematic treatment of bounded cohomology of topological spaces and  fundamental results.  

Matsumoto-Morita \cite{mm:banach} and Ivanov  \cite{ivanov:banach} prove independently that in degree 2, $\|  \cdot \|_\infty$ defines a norm in bounded cohomology, so that the space $\Hb_{\bb}^2(X;\bR)$ is a Banach space with respect to this norm.  In \cite{soma:nonBanach}, Soma shows that the pseudo-norm is in general not a norm in degree $\ge 3$.  In Section \ref{relations and banach subspace}, we will consider the quotient $\overline{\Hb}_{\bb}^3(S;\bR) =\Hb_{\bb}^3(S;\bR)/Z$ where $Z\subset \Hb_{\bb}^3(S;\bR)$ is the subspace of zero-seminorm elements.  Then $\overline{\Hb}_{\bb}^3(S;\bR)$ is a Banach space with the $\| \cdot \|_\infty$ norm.   

 \subsection{Continuous Bounded Cohomology of Groups}\label{bounded groups}
Let $G$ be a topological group.  We define a co-chain complex for $G$ by considering the collection of continuous, $G$-invariant functions \[\Cb^n(G;\bR) = \lbrace G^{n+1} \to \bR\rbrace.\]  The homogeneous coboundary operator $d$ for the trivial $G$ action on $\bR$ is, for ${f\in \Cb^n(G;\bR)}$, \[d f (g_0, ... ,g_{n+1}) = \sum_{i=0}^{n+1} (-1)^i f(g_0, ..., \hat{g}_i,..., g_{n+1}),\] where $\hat g_i$ means to omit that element, as usual.    
The coboundary operator gives the collection $\Cb^\bullet(G;\bR)$ the structure of a (co)-chain complex.  An $n$-co-chain $f$ is \emph{bounded} if \[\|f\|_\infty = \sup|f(g_0, ..., g_{n})| < \infty,\] where the supremum is taken over all $n+1$ tuples $(g_0, ..., g_{n}) \in G^{n+1}$.  

The operator  ${d : \Cb^n_{\bb}(G;\bR) \to \Cb^{n+1}_{\bb}(G;\bR)}$ is a bounded linear operator with operator norm at most $n+2$, so the collection of bounded co-chains $\Cb^\bullet_{\bb}(G;\bR)$ forms a subcomplex of the ordinary co-chain complex. The cohomology of $(\Cb_{\bb}^\bullet(G;\bR),d)$ is called the \emph{continuous bounded cohomology} of $G$, and we denote it $\Hb_{\text{cb}}^\bullet (G;\bR)$.  When $G$ is a discrete group, the continuity assumption is vacuous, and we write $\Hb_{\text{b}}^\bullet (G;\bR)$ to denote the bounded cohomology of $G$ in the case that it is discrete.  The $\infty$-norm $\|\cdot\|_\infty$ descends to a pseudo-norm on bounded cohomology in the usual way.  A continuous group homomorphism $\varphi: H\to G$ induces a map $\varphi^*:\Hb_{\text{cb}}^\bullet (G;\bR)\to \Hb_{\text{cb}}^\bullet (H;\bR)$ that is norm non-increasing.  Brooks \cite{brooks:remarks}, Gromov \cite{gromov:bdd}, and Ivanov \cite{ivanov:isometric} proved the remarkable fact that for any connected CW-complex $M$, the classifying map $K(\pi_1(M),1)\to M$ induces an isometric isomorphism $\Hb^\bullet_{\bb}(M;\bR)\to \Hb^\bullet_{\bb}(\pi_1 (M);\bR)$.  We therefore identify the two spaces $\Hb^\bullet_{\bb}(\pi_1(M);\bR)= \Hb^\bullet_{\bb}(M;\bR)$.  

\subsection{The Bounded Fundamental Class}\label{bounded fundamental class}
Let $x\in \bH^3$ and consider the function $\vol_x: (\G)^4\to \bR$ which assigns to $(g_0, ..., g_3)$ the signed hyperbolic volume of the convex hull of the points $g_0x, ..., g_3x$.  Any geodesic tetrahedron in $\bH^3$ is contained in an ideal geodesic tetrahedron.  There is an upper bound $v_3$ on volume that is maximized by a \emph{regular} ideal geodesic tetrahedron \cite{Thurston:notes}, so $\displaystyle \| \vol_x\|_\infty = v_3$.
One checks that $d\vol_x=0$, so that $[\vol_x]\in \Hb_{\text{cb}}^3(\G;\bR)$.  Moreover, for any $x,y\in \bH^3$, $[\vol_x] = [\vol_y]\not=0$.  Define $\Vol = [\vol_x]$; the continuous bounded cohomology $\Hb_{\text{cb}}^3(\G;\bR)= \langle \Vol\rangle_\bR$, and in fact $\|\Vol\|_\infty = v_3$, as well (see e.g. \cite{BBI:mostow} for a discussion of the hyperbolic volume class in dimensions $n\ge 3$).
Let $\rho: \g \to \G$ be any group homomorphism.  Then $\rho^*\Vol\in\Hb_{\text{cb}}^3(\g;\bR)$ is called the \emph{bounded fundamental class of $\rho$}.  

We now specialize to the case that $\g = \pi_1(S)$, where $S$ is a closed oriented surface of negative Euler characteristic and give a geometric description of the bounded fundamental class.  If $\rho: \pi_1(S)\to \G$ is discrete and faithful, then the quotient $M_\rho = \bH^3/\im\rho$ is a hyperbolic manifold and it comes equipped with a homotopy equivalence $f: S \to M_\rho$ inducing $\rho$ on fundamental groups.  

 Let $\omega\in \Omega^3(M_\rho)$ be such that $\pi^*\omega$ is the Riemannian volume form on $\bH^3$ under the covering projection $\bH^3 \to[\pi] M_\rho$.  Suppose $\sigma: \Delta_3\to M_\rho$ is a singular 3-simplex.  We have a chain map  \cite{Thurston:notes}
\[\str : \Cb_\bullet (M_\rho) \to \Cb_\bullet (M_\rho)\] defined by homotoping $\sigma$,  relative to its vertex set, to the unique totally geodesic hyperbolic tetrahedron $\str \sigma$.  The co-chain \[ \widehat\omega(\sigma)= \int_{\str\sigma}\omega \equiv \int_{\Delta_3}(\str\sigma)^*\omega\] measures the signed hyperbolic volume of the straitening of $\sigma$.  
We use the fact that $\str$ is a chain map, together with Stokes' Theorem to observe that if $\upsilon: \Delta_4 \to M_\rho$ is any singular 4-simplex, 
\[d \widehat\omega(\upsilon) = \int_{\str  \partial  \upsilon  } \omega= \int_{\partial \str \upsilon} \omega = \int_{\str \upsilon} d\omega = 0,\]
because $d\omega \in \Omega^4(M_\rho)= \{0\}$.  The class $[\widehat\omega]\in \Hb^3_{\bb}(M_\rho;\bR)$ is the \emph{bounded fundamental class} of $M_\rho$.  Under the isometric isomorphism $f^*: \Hb_{\bb}^3(M_\rho;\bR) \to \Hb_{\bb}^3(\pi_1(S);\bR)$ induced by $f: S \to M_\rho$, $f^*[\widehat\omega] = \rho^*\Vol$.  If $M_\rho$ has end invariants $\nu = (\nu_-, \nu_+)$ (see sections \ref{ends 1} - \ref{infiniteends}), we also use the notation $[\widehat\omega(\nu_-, \nu_+)]$ to denote the bounded fundamental class.

\subsection{Coarse geometry}
A general reference for material in this section is \cite{BH}.  
Let $(X,d_X)$ and $(Y,d_Y)$ be metric spaces (whenever possible we use subscripts to explain which metric we are calculating distances with respect to).  A $(\lambda, \epsilon)$-\emph{quasi-isometric embedding} is a not necessarily continuous map $f: X\to Y$ such that, for all $x,y\in X$, 
\[ \lambda\inverse d_X(x,y) -\epsilon \le d_Y(f(x), f(y))\le \lambda d_X(x,y) +\epsilon.\]
A $(\lambda,\epsilon)$-\emph{quasi-geodesic} is a quasi-isometric embedding of $\bZ$ or $\bR$.  
A $(\lambda, \epsilon)$-qi-embedding is a \emph{quasi-isometry} if it is coarsely surjective.  That is, for all $y\in Y$ there is an $x\in X$ such that $d_Y(f(x), y)<\epsilon$.  

Let $(X, d)$ be a geodesic metric space, and for $x,y \in X$, denote by $[x,y]$ a geodesic segment (we will often conflate geodesics as maps parameterized by or proportionally to arc-length and their images) joining $x$ and $y$.   If there is some $\delta\ge 0$ such that for any $x,y,z\in X$, any triangle with geodesic sides $\Delta xyz$ satisfies the property that any side is in the $\delta$-neighborhood of the union of the other two, then  $(X,d)$  is said to be $\delta$-\emph{hyperbolic}.  

The following result is sometimes referred to as the Morse Lemma in the literature.  
\begin{theorem}[see e.g. \cite{BH}]\label{Morse}
For all $\delta\ge 0$, $\epsilon>0$,  and $\lambda\ge 1$, there is a constant $D$ such that the following holds:
Suppose $Y$ is a $\delta$-hyperbolic metric space.  Then the Hausdorff distance between a geodesic and a $(\lambda,\epsilon)$-quasi-geodesic joining the same pair of endpoints is no more than $D$.  
\end{theorem}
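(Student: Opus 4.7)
The plan is to follow the classical two-step strategy: first tame the quasi-geodesic to a continuous one with controlled constants, then exploit the logarithmic divergence of continuous curves from geodesics in a hyperbolic space to control Hausdorff distance. Let $c: [a,b]\to Y$ be a $(\lambda,\epsilon)$-quasi-geodesic and let $\gamma = [p,q]$ be a geodesic with the same endpoints $p = c(a)$, $q = c(b)$.

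First I would reduce to the continuous case by a piecewise-geodesic replacement: choose finitely many sample points $t_0 = a < t_1 < \cdots < t_N = b$ with gaps of roughly unit size and connect $c(t_i)$ to $c(t_{i+1})$ by a geodesic arc in $Y$. A direct estimate using the quasi-isometric embedding inequality shows that the resulting continuous path $c': [a,b]\to Y$ is a $(\lambda', \epsilon')$-quasi-geodesic for constants depending only on $\lambda, \epsilon$, and that every point of $c'$ is within a constant of $\mathrm{im}(c)$ and vice versa. So it suffices to bound the Hausdorff distance between $\gamma$ and $c'$.

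The key geometric input I would then invoke is the following standard $\delta$-hyperbolic lemma: if $c'$ is a continuous curve from $p$ to $q$ and $x \in [p,q]$, then $d(x, \mathrm{im}(c')) \le \delta |\log_2 \ell(c'|_{[p_1,p_2]})| + O(\delta)$ for an appropriate subcurve, proved by a dyadic bisection/subdivision argument using the $\delta$-thin triangle condition. Applied to the subarc of $c'$ of length at most $\lambda' L + \epsilon'$, where $L = d(p,q)$, this shows every point of $\gamma$ lies within some $D_1 = D_1(\delta,\lambda,\epsilon)$ of $c'$, and hence of $c$. That handles one direction of Hausdorff containment.

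For the reverse direction—every point of $c$ is near $\gamma$—I would argue by projection: given $c(t)$, let $s^{-} < t < s^{+}$ be the extremal parameter values for which $c(s^{\pm})$ lies within $D_1$ of $\gamma$ (which exist because the endpoints are on $\gamma$). Then the subcurve $c|_{[s^-, s^+]}$ is contained in a tubular neighborhood of a geodesic $[c(s^-), c(s^+)]$ whose length is at most $|s^+ - s^-| \cdot \lambda + \epsilon$, again by the previous step applied to the continuous replacement. Pairing this with the lower quasi-isometric bound $d(c(s^-), c(s^+)) \ge \lambda^{-1}|s^+ - s^-| - \epsilon$ forces $|s^+ - s^-|$ to be bounded in terms of $\lambda, \epsilon, \delta$, and hence $d(c(t), \gamma) \le D_2$ for some constant $D_2$. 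Taking $D = \max\{D_1, D_2\}$ completes the argument. The main obstacle is the bookkeeping in the tameness reduction and controlling the constants through the logarithmic divergence estimate, but no new geometric idea beyond thin triangles is needed.
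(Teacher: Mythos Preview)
The paper does not prove this statement; it is quoted as the Morse Lemma with a reference to \cite{BH} and used as a black box throughout. Your outline is the standard Bridson--Haefliger argument (tame, apply the logarithmic divergence lemma, then handle the reverse inclusion), so in spirit you are reproducing exactly what the cited source does.

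There is, however, a genuine gap in your reverse-direction step. You write that pairing the containment of $c|_{[s^-,s^+]}$ in a tube around $[c(s^-),c(s^+)]$ with the lower quasi-isometry bound $d(c(s^-),c(s^+))\ge \lambda^{-1}|s^+-s^-|-\epsilon$ ``forces $|s^+-s^-|$ to be bounded.'' It does not: those two inequalities together impose no upper bound on $|s^+-s^-|$, and indeed a quasi-geodesic can run parallel to $\gamma$ at distance just above $D_1$ for an arbitrarily long time. What is missing is the connectedness argument from \cite{BH}: project $c(s^\pm)$ to nearby points $p',q'\in\gamma$; every point of $[p',q']$ lies within $D_1$ of $c$ but not of $c|_{(s^-,s^+)}$, so the closed sets of points of $[p',q']$ near $c|_{[a,s^-]}$ and near $c|_{[s^+,b]}$ cover $[p',q']$ and must overlap, producing parameters $t_1\le s^-<s^+\le t_2$ with $d(c(t_1),c(t_2))\le 2D_1$, and only then does the lower quasi-isometry bound give $|s^+-s^-|\le |t_2-t_1|\le \lambda(2D_1+\epsilon)$. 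Alternatively, you can bypass bounding $|s^+-s^-|$ altogether: since $c(s^\pm)$ are each within $D_1$ of $\gamma$, a thin-quadrilateral estimate gives $[c(s^-),c(s^+)]\subset\cN_{D_1+2\delta}(\gamma)$, and then your tube containment yields $d(c(t),\gamma)\le 2D_1+2\delta$ directly. Either fix closes the gap; as written, the step is incomplete.
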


\subsection{Teichm\"uller Space}
Let $S$ be a closed, oriented surfaced of genus $g\ge 2$.  The \emph{Teichm\"uller space} $\teich(S)$ is formed from the set of pairs $(g,X)$ where $g: S\to X$ is a homotopy equivalence and $X$ is a hyperbolic structure on $S$.  Two pairs $(g,X)$ and $(h, Y)$ are said to be equivalent if there is an isometry $\iota: X\to Y$ such that $\iota\circ g\sim h$.  The equivalence class of a pair is denoted $[g,X]$, and the Teichm\"uller space of $S$ is the set of equivalence classes of such pairs, with topology defined by the Teichm\"uller metric.  Briefly, if $\bar g$ is a homotopy inverse for $g$, the Teichm\"uller distance is defined by 
\[d_{\teich(S)}([g,X], [h,Y]) = \inf_{f\sim h\circ \bar g}\frac12\log(K_f)\]
where $K_f$ is the maximum of the pointwise quasiconformal dilatation of the map $f$.  Thus the Teichm\"uller metric measures the difference between the marked conformal structures determined by $(f,X)$ and $(g,Y)$.
Teichm\"uller's Theorems imply that there is a distinguished map $T: X\to Y$, called a Teichm\"uller map, in the homotopy class of $h\circ \bar g$, which achieves the minimal quasiconformal dilatation among all maps homotopic to $h\circ \bar g$.    By abuse of notation, we will often suppress markings and write $X\in\teich(S)$ to refer to the class of a marked hyperbolic structure $g: S\to X$.  We also do not distinguish between pairs and equivalence classes of pairs, but we freely precompose markings with homeomorphisms isotopic to the identity on $S$ to stay within an equivalence class.

\subsection{Ends of hyperbolic $3$-manifolds} \label{ends 1}
Tameness of manifolds with incompressible ends was proved by Bonahon \cite{bon}.  Canary proved that topological tameness implied Thurston's notion of \emph{geometric} tameness \cite{canary:ends}.  For discrete and faithful representations $\rho: \pi_1(S) \to \G$, the quotient $M_\rho$ is  diffeomorphic to the product $S\times \bR$.  Thus $\rho$ determines a homotopy class of maps $[f: S\to M_\rho]$ inducing (the conjugacy class of) $\rho$ at the level of fundamental groups.  $M_\rho$ has two ends, which we think of as $S^+ = S\times \{\infty\}$ and $S^- = S\times \{-\infty\}$. Let $\g = \im \rho$.  The {\it limit set} $\Lambda_\g$  is the set of accumulation points of $\g.x\subset \partial\bH^3$ for some (any) $x\in \partial\bH^3$.  The {\it domain of discontinuity } of $\g$ is $\Omega_{\g} = \hat\bC \setminus \Lambda_{\g}$.
Denote by $\cH(\Lambda_\g)\subset \bH^3$ the convex hull of $\Lambda_\g$.  The \emph{convex core} of $M_\rho$ is $\core(M_\rho) = \cH(\Lambda_\g)/\g$.     

Let $e \in\{+, -\}$.   Say that $S^e$  is {\it geometrically finite} if there is some neighborhood of $S^e$  disjoint from $\core(M_\rho)$.  Call $S^e$ {\it geometrically infinite} otherwise.
By the Ending Lamination Theorem (see Section \ref{section:ELT}) the isometry type of $M_\rho$ is uniquely determined by the surface $S$ together with its {\it end invariants}  $\nu= (\nu(S^-), \nu(S^+))$.  We describe the end invariants $\nu(S^e)$ below.

\subsection{Geometrically finite ends}   \label{ends 2}
 
When $S^e$ is geometrically finite, there is a non-empty component $\Omega_{\g}^e\subset \Omega_{\g}$ on which $\g$ acts freely and properly discontinuously by conformal automorphisms.  This action induces a conformal structure $X=\Omega_{\g}^e/ \g$ on $S^e$ with marking $f: S\to X$ inducing $\rho$ on fundamental groups.  Moreover, the end of $M_\rho$ given by $S_e$ admits a conformal compactification by adjoining $X$ at infinity.  If $S^e$ is geometrically finite, we associate the end invariant $\nu(S^e) = [f,X]\in \teich(S)$. 
 
\subsection{Geometrically infinite ends}\label{infiniteends}
Suppose $S^e$ is geometrically infinite, and let $E_e\cong S\times [0,\infty)$ be a neighborhood of $S^e$. Then $E_e$ is \emph{simply degenerate}.  That is, there is a sequence $\{\gamma_i^*\}$ of homotopically essential, closed geodesics exiting $E_e$.  Each $\gamma_i^*$ is homotopic in $E_e$ to a simple closed curve $\gamma_i\subset S\times\{0\}$.   Moreover, we may find such a sequence such that the length $\ell_{M_\rho}(\gamma_i^*)\le L_0$, where $L_0$ is the Bers constant for $S$.  Equip $S=S\times \{0\}$ with any hyperbolic metric.  Find geodesic representatives $\gamma_i^*\subset S$ with respect to this metric.  Then up to taking subsequences, the projective class of the intersection measures $[\gamma_i^*]\in \cPML(S)$ converges to the projective class of a measured lamination $[\lambda_e]\in \cPML(S)$.   Thurston \cite{Thurston:notes}, Bonahon \cite{bon}, and Canary \cite{canary:ends} show, in various contexts, that the topological support $\lambda_e\subset S$ is minimal, filling, and does not depend on the exiting sub-sequence $\{\gamma_i\}$.  Furthermore, given any two hyperbolic structures on $S$, the spaces of geodesic laminations are canonically homeomorphic.  So $\lambda_e$ also does not depend on our choice of metric on $S$.  This \emph{ending lamination} is the end invariant $\nu(S^e) = \lambda_e$.  Call $\cEL(S)$ the space of minimal, filling laminations.  

\subsection{Pleated surfaces} A \emph{pleated surface} is a map $f: S \to M_\rho$ together with a hyperbolic structure  $X\in \teich(S)$, and a geodesic lamination $\lambda$ on $S$ so that $f$ is length preserving on paths, maps leaves of $\lambda$ to geodesics, and is locally geodesic on the complement of $\lambda$. Pleated surfaces were introduced by Thurston  \cite{Thurston:notes}.  We insist also that  $f$ induces $\rho$ on fundamental groups, \ie it is in the homotopy class of the marking $S\to M_\rho$.  We write $f: X \to M_\rho$ for a pleated surface.  The \emph{pleating locus} of $f$ is denoted by $\pleat(f)$; it is the minimal lamination for which $f$ maps leaves geodesically.

\subsection{The Ending Lamination Theorem} \label{section:ELT}

By Thurston's Double Limit Theorem \cite{Thurston:double}, any pair of end invariants $\nu = (\nu_-, \nu_+) \in (\teich(S)\sqcup \cEL(S))^2 \setminus \Delta(\cEL(S))$ can be realized as the end invariants of a hyperbolic structure on $S\times \bR$.  It was a program of Minsky to establish the following converse, conjectured by Thurston.  We state here a special case of the more general theorem for finitely generated Kleinian groups.
\begin{theorem}[Ending Lamination Theorem]\label{ELT}
Let $S$ be a closed, orientable surface of genus at least $2$ and $\rho_0, \rho_1 : \pi_1(S)\to \G$ be discrete and faithful representations.  There is an orientation preserving isometry $\iota: M_{\rho_0} \to M_{\rho_1}$ such that $\iota \circ f_0 \sim f_1$ if and only if $\nu(M_{\rho_0}) = \nu(M_{\rho_1})$.  Equivalently, there is a $\gamma \in \G$ such that $\rho_0 = \gamma\circ \rho_1 \circ \gamma\inverse$ if and only if $\nu(M_{\rho_0}) = \nu(M_{\rho_1})$. 
\end{theorem}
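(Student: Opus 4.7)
The easy direction is essentially built into the definitions: an orientation-preserving isometry $\iota$ homotopic (via the markings) to the identity sends the convex core to the convex core and the geometrically finite or infinite ends to ends of the same type, preserves conformal boundary pointwise in the Teichmüller factor, and sends exiting sequences of closed geodesics to exiting sequences of closed geodesics with the same projective limit in $\cPML(S)$. The second equivalence in the statement then follows because two discrete faithful representations into $\G$ differ by conjugation if and only if their quotient manifolds admit an orientation-preserving isometry respecting the markings. So the content of the theorem is the converse: matching end invariants forces the existence of $\iota$.

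For the hard direction I would follow the Minsky / Brock--Canary--Minsky strategy and build an abstract combinatorial model depending only on $\nu = (\nu_-, \nu_+)$, then compare it to $M_{\rho_0}$ and $M_{\rho_1}$ separately. The first step is to extract from $\nu$ a \emph{hierarchy of tight geodesics} in the curve and arc complexes of $S$ and its essential subsurfaces, using the Masur--Minsky subsurface projections; vertices in the two end-invariant limits play the role of the ``endpoints at infinity'' of the hierarchy. From the hierarchy one constructs a singular model manifold $\model{M}_\nu$ by gluing together standard blocks indexed by the edges of the hierarchy, with Margulis-type solid torus pieces around short curves detected by large subsurface projections. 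The core analytic step is then the \textbf{Bi-Lipschitz Model Theorem}: there is a $K = K(S)$ and a marking-preserving $K$-bi-Lipschitz homeomorphism $\model{M}_\nu \to M_{\rho_i}$ for $i = 0, 1$, whose bi-Lipschitz constant depends only on $S$ (not on $\nu$). Composing one such map with the inverse of the other yields a bi-Lipschitz homeomorphism $\Phi: M_{\rho_0}\to M_{\rho_1}$ in the preferred homotopy class.

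To upgrade $\Phi$ from bi-Lipschitz to isometric I would invoke Sullivan's rigidity together with the quasiconformal deformation theory of Kleinian groups: a marking-preserving bi-Lipschitz homeomorphism lifts to a $(\rho_0,\rho_1)$-equivariant quasi-isometry of $\bH^3$, which extends to a quasiconformal conjugacy on the sphere at infinity taking $\Lambda_{\rho_0(\pi_1 S)}$ to $\Lambda_{\rho_1(\pi_1 S)}$. On the geometrically finite ends the conformal end invariants already coincide by hypothesis, so the Beltrami differential of this conjugacy is trivial on the domains of discontinuity. On the geometrically infinite part the limit set has full measure, so by Sullivan's theorem that a $\pi_1(S)$-invariant Beltrami differential supported on a limit set of positive measure must be trivial, the conjugacy is conformal; it therefore descends to the desired isometry $\iota : M_{\rho_0} \to M_{\rho_1}$.

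The main obstacle, and the deepest part of the argument, is the Bi-Lipschitz Model Theorem. The key difficulties are to relate combinatorial short-curve detection (subsurface projections being large) to geometric short-curve detection (closed geodesics actually being short in $M_\rho$), which in the bounded geometry case is handled by Minsky's a priori length bounds via pleated surfaces, and in general requires the machinery of projection bounds, the Lipschitz model and insertion of Margulis tubes of the correct radii, and the final promotion from Lipschitz to bi-Lipschitz in \cite{brock-canary-minsky:ELTII}. Everything downstream of that theorem is a fairly standard application of quasiconformal rigidity.
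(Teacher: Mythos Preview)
Your outline is correct and matches the paper's own account essentially line for line: the paper does not prove Theorem~\ref{ELT} but quotes it as background, summarizing the Masur--Minsky hierarchy, Minsky's Lipschitz model \cite{minsky:ELTI}, the bi-Lipschitz promotion of Brock--Canary--Minsky \cite{brock-canary-minsky:ELTII}, and the concluding appeal to Sullivan rigidity. Your write-up is a faithful expansion of exactly that sketch, so there is nothing to correct or contrast.
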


In the case that $M_{\rho_i}$ have positive injectivity radius, Theorem \ref{ELT} was proved by Minsky  \cite{Minsky:ELTbdd} (see Section \ref{ELTbdd}).  For the general case, Masur and Minsky initiated a detailed study of the geometry of the \emph{curve complex} of $S$ \cite{masur-minsky:complex1} as well as its hierarchical structure \cite{masur-minsky:complex2}.  Given a representation $\rho: \pi_1(S)\to \G$, Minsky extracts the end invariants $\nu = (\nu_-, \nu_+)$ of $M_\rho$ and then uses the hierarchy machinery to build a model manifold $\model M_\nu$ and Lipschitz homotopy equivalence $\model M_\nu \to M_\rho$ \cite{minsky:ELTI}.   Brock, Canary, and Minsky then promote $\model M_\nu \to M_\rho$ to a bi-Lipschitz homotopy equivalence \cite{brock-canary-minsky:ELTII}.  An application of the Sullivan Rigidity Theorem then concludes the proof of the Ending Lamination Theorem in the case of marked Kleinian surface groups.  In this paper, we will use the geometry of the model manifold for bounded geometry ending data from \cite{Minsky:ELTbdd} which is built from a Teichm\"uller geodesic joining $\nu_-$ to $\nu_+$.

\subsection{Teichm\"uller geodesics and models with bounded geometry}\label{ELTbdd}
Suppose $M$ is a hyperbolic structure on $S\times \bR$ with bounded geometry and $f: S \to M$ is a homotopy equivalence.   Fix a basepoint $Y_0\in \teich(S)$ so that for each geometrically infinite end $S^e$ of $M$ and transverse measure $\mu$ supported on $\lambda_e = \nu(S^e)\in \cEL(S)$, there is a unique quadratic differential $\QQ_{\mu}$ that is holomorphic with respect to the conformal structure underlying $Y_0$ and whose vertical foliation is measure equivalent to $\mu$ \cite{Hubbard--Masur:section}.  Since $M$ has bounded geometry, the Teichm\"uller geodesic ray $t\mapsto Y_t\in \teich(S)$, $t\ge 0$ determined by $\QQ_{\mu}$ has bounded geometry \cite[Theorem 1.5]{Rafi:bdd} (see also \cite[Bounded geometry theorem]{Minsky:bounded_geometry}), \ie it projects to a compact subset of the moduli space.  By Masur's Criterion \cite[Theorem 1.1]{Masur:criterion} and because $\lambda_e$ is minimal, the set of transverse measures supported on $\lambda_e$ is equal to $\bR^{>0}\mu$.  This means that $\QQ_{\mu}$ is completely determined by $Y_0$, $\lambda_e$, and the area of the singular falt metric $|\QQ_\mu|$.  

In summary, since $M$ has  bounded geometry, there is a unique projective class of measures supported on $\lambda_e$ and unique quadratic differential $\QQ_e$ holomorphic on $Y_0$ with area $1$ and vertical foliation that is topologically equivalent to $\lambda_e$.  
Minsky proved that the pleated surfaces that can be mapped into an end of $M$ are approximated by a Teichm\"uller geodesic ray in the following sense. 
\begin{theorem}[{\cite[Theorem A]{Minsky:pleated}, \cite[Theorem 5.5]{Minsky:ELTbdd}}] \label{ending ray}
The Teichm\"uller geodesic ray $t\mapsto Y_t\in \teich(S)$ determined by $\QQ_e$ satisfies: for every $t\in [0, \infty)$, there is a pleated surface $f_t: X_t \to M$ homotopic to $f$, such that $d_{\teich(S)}(X_t, Y_t)\le A$, where $A$ depends only on $S$ and $\inj(M) = \epsilon >0$.  
  
\end{theorem}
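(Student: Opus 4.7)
The plan is to construct, for each $t \geq 0$, a pleated surface $f_t : X_t \to M$ that shares a bounded-length curve with $Y_t$ in the same combinatorial position, and then bound $d_{\teich(S)}(X_t, Y_t)$ by combining bounded geometry on both sides (in $M$ via $\inj(M) \geq \epsilon$, and on the ray via Rafi's theorem already invoked).

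First, since the end $S^e$ corresponding to $\lambda_e$ is simply degenerate \cite{canary:ends}, there is a sequence $\gamma_n \subset S$ of essential simple closed curves whose $M$-geodesic representatives $\gamma_n^*$ exit $E_e$ with $\ell_M(\gamma_n^*) \leq L_0$ (the Bers constant).  Completing each $\gamma_n$ to a pants decomposition and realizing it yields pleated surfaces $h_n : Z_n \to M$ exiting the end.  Because pleated maps are $1$-Lipschitz and $\inj(M) \geq \epsilon$, every essential closed curve on $Z_n$ has length at least $\epsilon$, so Mumford compactness places each $Z_n$ in a compact subset $K_\epsilon \subset \cM(S)$ depending only on $\epsilon$ and $S$.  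Rafi's theorem simultaneously places every $Y_t$ in a compact subset of $\cM(S)$.  To match at time $t$, select a short marking $\mu_t$ of $Y_t$ whose curves have length at most $L_1 = L_1(\epsilon, S)$, which exists by the compactness just noted.  Pick any $\beta_t \in \mu_t$; its $M$-geodesic representative $\beta_t^*$ has bounded length (either because $\beta_t$ is realized on a nearby $h_n$, or by bounded geometry of $M$ applied directly).  Realize $\beta_t$ by a pleated surface $f_t : X_t \to M$ with pleating locus containing $\beta_t$, so that $\ell_{X_t}(\beta_t) \leq \ell_M(\beta_t^*) \leq L_1$; another application of Mumford compactness gives $X_t \in K_\epsilon$.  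Both $X_t$ and $Y_t$ thus lie in $K_\epsilon$ and share $\beta_t$ as a bounded-length curve.

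The main obstacle is to control the \emph{relative twist} of $X_t$ and $Y_t$ about $\beta_t$: without such a bound, the two structures could differ by arbitrary Dehn powers about $\beta_t$ and $d_{\teich(S)}(X_t, Y_t)$ could be unbounded even with both structures in $K_\epsilon$.  The resolution exploits the fact that $X_t$ and $Y_t$ encode the same asymptotic data $\lambda_e$: the twist of $Y_t$ about $\beta_t$ is governed by the horizontal and vertical measured foliations of $\QQ_e$ at time $t$, while the twist of $X_t$ about $\beta_t$ is governed by the bending of $f_t$ along $\beta_t$ in $M$, which in turn is constrained by the pleating loci of the $h_n$ for large $n$, all of which converge projectively to $\lambda_e$.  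Bounded geometry converts this shared-direction picture into a quantitative estimate $|\mathrm{tw}_{\beta_t}(X_t) - \mathrm{tw}_{\beta_t}(Y_t)| = O_{S,\epsilon}(1)$ via intersection number comparisons between $\beta_t$ and the $\gamma_n^*$, as in \cite{Minsky:pleated}.  Combined with $X_t, Y_t \in K_\epsilon$, this twist bound produces the desired $d_{\teich(S)}(X_t, Y_t) \leq A = A(S, \epsilon)$.  The bounded-geometry hypothesis is essential at exactly this step: in the absence of an injectivity-radius bound, product regions in $\teich(S)$ allow the ray and the pleated surfaces to diverge arbitrarily while still sharing the same asymptotic data.
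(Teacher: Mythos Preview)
This theorem is not proved in the paper at all: it is stated as a background result with citations to \cite[Theorem A]{Minsky:pleated} and \cite[Theorem 5.5]{Minsky:ELTbdd}, and the paper simply invokes it in the construction of the model manifold in Section~\ref{ELTbdd}. There is therefore no ``paper's own proof'' to compare your attempt against.

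As for your sketch on its own merits, the strategy is broadly in the spirit of Minsky's argument, but the crucial step---bounding the relative twist about $\beta_t$---is not actually carried out. You write that ``bounded geometry converts this shared-direction picture into a quantitative estimate \ldots\ as in \cite{Minsky:pleated},'' which is a citation back to the very paper whose theorem you are trying to prove. The twist bound is the substantive content of the result; without an independent argument for it, your proposal is circular. Moreover, the claim that $\beta_t^*$ has bounded length in $M$ (``either because $\beta_t$ is realized on a nearby $h_n$, or by bounded geometry of $M$ applied directly'') is not justified: bounded geometry of $M$ gives a \emph{lower} bound on lengths, not an upper bound, and there is no a priori reason a short curve on $Y_t$ should be short in $M$ without already knowing something like the theorem you are proving.
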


Assume $M$ is doubly degenerate with end invariants $(\lambda_-, \lambda_+)$; using Theorem \ref{ending ray} we outline the construction of a model metric $ds$ on $S\times \bR$, such that $\model M = (S\times \bR, ds)$ approximates the geometry of $M$.  The nature of this approximation is made precise in Theorem \ref{model map}.  The details of this construction are carried out in \cite[Section 5]{Minsky:ELTbdd}. 
 
Again, since $M$ is $\epsilon$-thick, there are unique projective classes of measured laminations $[\lambda_\pm] \in \cPML(S)$ with supports equal to $\lambda_\pm$.  There exists a hyperbolic structure $Y_0\in \teich(S)$ and a quadratic differential $\QQ_0$ of unit area, holomorphic with respect to the conformal structure underlying $Y_0$, such that $\lambda_+$ is the (hyperbolic straightening of the) vertical foliation $\QQ_0^+$ of $\QQ_0$, and $\lambda_-$ is the horizontal foliation $\QQ_0^-$ of $\QQ_0$.  The conclusion of Theorem \ref{ending ray} holds for $S^-$ and $S^+$, since we can choose $Y_0$ as our basepoint.  
 
 The quadratic differential $\QQ_0$ gives rise to a singular euclidean metric $|\QQ_0|$ on $S$, which we can write infinitesimally as \[ |\QQ_0| = dx^2 +dy^2\] away from the zeros of $\QQ_0$, where $dx$ is the measure induced by $\QQ_0^+$ and $dy$ is the measure induced by $\QQ_0^-$.  Normalizing so that the identity map from $Y_0$ to $Y_t$ is the Teichm\"uller map, the image quadratic differential $\QQ_t$ (holomorphic with respect to $Y_t$) induces a metric $|\QQ_t|$ given by \[|\QQ_t| = e^{2t}dx^2 + e^{-2t}dy^2 \] away from the zeros of $\QQ_0$.  Define a metric $ds$ on $S\times \bR$  by \[ds^2 = e^{2t}dx^2 + e^{-2t}dy^2 +dt^2, \] where $t$ denotes arclength in the Teichm\"uller metric.  The singularities of this metric are exactly $\Sigma\times \bR$, where $\Sigma \subset S\times \{0\}$ is the set of zeros of $\QQ_0$. 
 
 \begin{theorem}[{\cite[Theorem 5.1]{Minsky:ELTbdd}}]\label{model map}
 There is a homotopy equivalence \[ \Psi: \model M = (S\times \bR, ds)\to M\] inducing $f_*$ on fundamental groups that lifts to an $(L, c)$-quasi-isometry $\widetilde{\Psi} :\widetilde{\model M} \to \widetilde M$ of universal covers, where $L$ and $c$ depend only on $\epsilon$ and $S$. 
 The map $\Psi$ satisfies the following properties:
 \begin{enumerate}[(i)]
 \item For each $n\in \bZ$, $\Psi(~\cdot ~, n) = f_n$, where $f_n: X_n\to M$ is the pleated map as in Theorem \ref{ending ray}.
 \item The identity mapping of $S$ lifts of an $(L,c)$-quasi-isometry of universal covers with respect to the singular flat metric $|\QQ_n|$ and the hyperbolic metric $X_n$. 
 \end{enumerate}
 If $M_k$ is the singly degenerate with end invariants $(Y_k, \lambda_+)$, then we have \[\Psi_k : \model M_k = (S\times [k,\infty),ds) \to \core (M_k)\] satisfies the same properties as above.  In addition, $\Psi(~\cdot ~, k)$ maps to $\partial \core(M_k)$. 
 \end{theorem}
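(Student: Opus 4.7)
The plan is to construct $\Psi$ slice-by-slice and then verify it is a quasi-isometry on universal covers. The construction has three ingredients: pleated surfaces at integer heights (which give property (i) directly), a comparison of the singular flat and hyperbolic metrics on each horizontal slice (which gives property (ii)), and a smooth interpolation between consecutive pleated surfaces.

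First, invoke Theorem \ref{ending ray} to obtain pleated surfaces $f_n : X_n \to M$ for each $n \in \bZ$ with $d_{\teich(S)}(X_n, Y_n) \leq A$. Setting $\Psi(\cdot, n) := f_n$ gives (i). For (ii), use that the Teichm\"uller ray $t \mapsto Y_t$ projects into a compact subset of moduli space, so each $Y_n$ is $\epsilon_0$-thick for a uniform $\epsilon_0 = \epsilon_0(\epsilon,S) > 0$. Classical extremal-versus-hyperbolic length comparisons in the thick part give a uniform bi-Lipschitz map $(S, |\QQ_n|) \to (S, Y_n)$ on universal covers. Composing with the quasi-conformal map $Y_n \to X_n$ of uniformly bounded dilatation (available because $d_{\teich(S)}(X_n, Y_n) \leq A$) yields the claim in (ii) with constants depending only on $\epsilon$ and $S$.

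To define $\Psi$ on the slab $S \times [n, n{+}1]$, exploit the fact that the model metric $ds^2 = e^{2t}dx^2 + e^{-2t}dy^2 + dt^2$ is designed so that the horizontal slice at height $t$ is isometric to the singular flat surface $(S, |\QQ_t|)$ along the Teichm\"uller flow through $Y_0$. Between $f_n$ and $f_{n+1}$ in $M$, bounded geometry and uniform bounds on pleating loci pleated surfaces give that they cobound a product region of uniformly bounded geometric width; use this to construct a homotopy between $f_n$ and $f_{n+1}$ that is uniformly Lipschitz with respect to $ds$ on the slab. Gluing these cellwise constructions produces $\Psi$.

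The main obstacle is the uniform lower bound in the quasi-isometry inequality for $\widetilde\Psi$: two points far apart in $\widetilde{\model M}$ must map far apart in $\widetilde M$. Horizontal separation within a slice is controlled by (ii), but vertical separation across many slices is subtler. The key input is that, in a bounded-geometry end, consecutive pleated surfaces $f_n$, $f_{n+1}$ lift to disjoint surfaces in $\widetilde M$ that are uniformly separated (this uses $\inj(M) \geq \epsilon$ together with the incompressibility of pleated surface lifts), so a geodesic in $\widetilde M$ crossing many levels has length at least proportional to the number of levels crossed. Combined with the slicewise estimate one obtains the global lower bound. For the singly degenerate case, the same construction restricts to $S \times [k, \infty)$: the slice at height $k$ is sent to the convex core boundary, which is itself a pleated surface bending along the measured lamination realizing the geometrically finite end invariant $Y_k$, so properties (i) and (ii) extend to the boundary slice without modification.
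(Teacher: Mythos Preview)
The paper does not prove this statement at all: Theorem~\ref{model map} is quoted as \cite[Theorem 5.1]{Minsky:ELTbdd}, and the surrounding text explicitly says ``The details of this construction are carried out in \cite[Section 5]{Minsky:ELTbdd}.'' So there is no in-paper proof to compare against; your sketch is an attempt to reconstruct Minsky's argument, not something the present paper supplies.

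As for the sketch itself, the overall architecture (pleated surfaces at integer heights, flat-versus-hyperbolic comparison on thick slices, interpolation across slabs) is the right one and is indeed how Minsky proceeds. However, your handling of the vertical lower bound has a genuine gap. You assert that consecutive pleated surfaces $f_n$, $f_{n+1}$ ``lift to disjoint surfaces in $\widetilde M$ that are uniformly separated,'' but this is not true in general: distinct pleated surfaces in the same homotopy class can intersect, and their lifts need not be disjoint, let alone uniformly separated. What Minsky actually uses is a more delicate efficiency-of-pleated-surfaces argument together with the bounded-geometry hypothesis to show that the \emph{images} $f_n(S)$ make definite linear progress in $M$ as $n$ varies (cf.\ \cite[Theorem~4.1 and \S5]{Minsky:ELTbdd}); the lower quasi-isometry bound then comes from combining this with the horizontal estimate, not from any disjointness of lifted surfaces. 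Your sketch also glosses over why the interpolation across each slab can be made uniformly Lipschitz: this requires controlling the geometry of the homotopy between $f_n$ and $f_{n+1}$, which in Minsky's treatment comes from bounds on how pleated surfaces with nearby pleating loci sit relative to one another in bounded geometry.
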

 
\section{Singly degenerate classes}\label{singly degenerate classes}

We will use the Geometric Inflexibility Theorem of Brock--Bromberg \cite[Theorem 5.6]{Brock-Bromberg:Inflexibility} that generalizes a result of McMullen \cite[Theorem 2.11]{McMullen:Renormalization}.  Roughly, Geometric Inflexibility says that the deeper one goes into the convex core of a hyperbolic manifold, the harder it is to deform the geometry, there. In this section, we use only the volume preserving and global bi-Lipschitz constant.  In later sections, we use the pointwise, local bi-Lipschitz estimates.  Given a homotopy equivalence $\phi: M_0\to M_1$ of complete hyperbolic manifolds, say that $M_1$ is a $K$-quasiconformal deformation of $M_0$ if $\phi$ lifts to universal covers and continuously extends to an equivariant $K$-quasiconformal homeomorphism $\partial \bH^3\to \partial\bH^3$.  
\begin{theorem}[Geometric Inflexibility, {\cite[Theorem 5.6]{Brock-Bromberg:Inflexibility}}]\label{inflexibility}
Let $M_0$ and $M_1$ be complete hyperbolic structures on a 3-manifold $M$ so that $M_1$ is a $K$-quasiconformal deformation of $M_0$, $\pi_1(M)$ is finitely generated, and $M_0$ has no rank-one cusps.
There is a volume preserving $K^{3/2}$-bi-Lipschitz diffeomorphism \[\Phi : M_0 \to M_1.\]
whose pointwise bi-Lipschitz constant satisfies \[\log \bilip(\Phi,p)\le C_1e^{-C_2d(p,M_0\setminus\core(M_0))}\]
for each $p\in M_0^{\ge \epsilon}$,where $C_1$ and $C_2$ depend only on $K$, $\epsilon$, and $\area(\partial \core(M_0))$.
\end{theorem}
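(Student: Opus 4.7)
The plan is to reduce to an infinitesimal statement and then integrate. By standard quasiconformal deformation theory, one can interpolate between $M_0$ and $M_1$ through a holomorphic path of hyperbolic manifolds $M_s$, $s\in [0,1]$, produced by scaling a Beltrami differential $\mu$ on $\Omega_\Gamma/\Gamma$ representing the class of the quasiconformal deformation; here $\Gamma$ is the holonomy of $M_0$. The derivative of this family at each time $s$ is represented by an equivariant vector field $v_s$ on $\bH^3$, and the desired diffeomorphism $\Phi$ is obtained by integrating $v_s$ from $s=0$ to $s=1$. To arrange volume preservation one chooses a divergence-free gauge for $v_s$, which is possible because the obstruction is cohomological.

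The infinitesimal bi-Lipschitz distortion of the flow of $v_s$ at a point $p$ is controlled by the pointwise norm of the associated strain field $\eta_s(p)$, i.e.\ the symmetric traceless part of $\nabla v_s$. Thus the key task is a pointwise estimate of the form $\|\eta_s(p)\|\le C_1 e^{-C_2 d(p,\, M_0\setminus \core(M_0))}$. Integrating this in $s$ along the path and in time along the flow lines gives the claimed pointwise decay of $\log\bilip(\Phi,p)$, while the global $K^{3/2}$ bound comes from integrating the uniform bound $\|\eta_s\|\le \mathrm{const}\cdot \|\mu_s\|_\infty$ across the entire path (one can arrange the $3/2$ factor using the optimal harmonic extension from $\partial\bH^3$ to $\bH^3$, following Reimann).

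The core analytic input is a reproducing kernel representation of the harmonic strain field, as in McMullen's work on renormalization. One chooses $v_s$ so that $\eta_s$ is harmonic as a section of the appropriate bundle, then expresses $\eta_s(p)$ as an integral over $\partial \bH^3$ of the Beltrami differential $\mu_s$ against a kernel $K(p,\zeta)$. The kernel decays as $e^{-2 d(p,\zeta)}$ in visual distance, and $\mu_s$ is supported only over $\Omega_\Gamma\subset\partial\bH^3$; therefore $\|\eta_s(p)\|$ is controlled by the visual measure from $p$ of the domain of discontinuity, weighted by the kernel. When $p$ lies deep inside $\core(M_0)$, the lift of the domain of discontinuity occupies an exponentially small portion of the visual sphere at $p$ in terms of $d(p, M_0\setminus\core(M_0))$, yielding the decay estimate.

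The main obstacle is exactly this visual measure estimate in the presence of the geometry of $\core(M_0)$. Bounding it requires mixing-type information about the geodesic flow on the convex core, and the implied constants depend on $\area(\partial\core(M_0))$ and on a lower injectivity radius bound $\epsilon$ because geodesic rays entering sufficiently thin tubes can return to the boundary quickly and inflate the visual measure estimate. A secondary obstacle is making the whole construction compatible with volume preservation and with the global $K^{3/2}$ bi-Lipschitz bound simultaneously; this is handled by a careful choice of gauge for $v_s$ and by appealing to the harmonic extension theorem rather than the naive barycentric or Ahlfors--Beurling extensions.
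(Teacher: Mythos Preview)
The paper does not prove this theorem; it is quoted from Brock--Bromberg, followed only by explanatory remarks tracing the various ingredients through the literature (Reimann, Ahlfors, Thurston, McMullen). Your sketch is a reasonable outline of the actual Brock--Bromberg/McMullen approach and is broadly consistent with those remarks: one extends a time-dependent quasiconformal vector field on $\partial\bH^3$ to $\bH^3$ via Reimann's construction, integrates to obtain $\Phi$, and controls the pointwise strain by depth in the convex core.

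One point where your account differs from the paper's remarks concerns volume preservation. You present it as a gauge choice---selecting a divergence-free representative for $v_s$, with the obstruction being cohomological. The paper is more specific: Reimann's visual averaging extension of a quasiconformal vector field is \emph{automatically} divergence-free (a consequence of naturality under M\"obius transformations; see the paper's citations to McMullen's Appendices A and B), so the flow is volume preserving without any separate gauge-fixing step. This is not just a matter of packaging; it is exactly what resolves the ``secondary obstacle'' you raised about making volume preservation compatible with the global $K^{3/2}$ bound. Both come from the same Reimann extension, not from two independent choices that must then be reconciled.

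Your description of the exponential decay via harmonic strain fields and visual-measure estimates is in the right spirit for the Brock--Bromberg argument, but the paper itself does not rehearse that part and simply cites the source.
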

Some remarks about the statement of Theorem \ref{inflexibility} are in order, since the above formulation is rather spread out over the literature.  Namely, the result quoted above uses work of H. M. Reimann \cite{Reimann:volumepreserving} that unifies constructions of Ahlfors \cite{Ahlfors:inflex} and Thurston \cite[Chapter 11]{Thurston:notes} relating a quasi-conformal deformation at infinity to the internal geometry of a hyperbolic structure; see also \cite[Theorem 5.1]{Brock-Bromberg:Inflexibility} for a summary of the main result of Reimann's work that includes the $K^{3/2}$-bi-Lipschitz constant that is absent in the statement of \cite[Theorem 5.6]{Brock-Bromberg:Inflexibility}.   A self contained exposition of Reimann's work can be found in \cite[\S2.4 and Appendices A and B]{McMullen:Renormalization}.     
From a time-dependent quasi-conformal vector field on $\hat\bC = \partial \bH^3$, Reimann  \cite{Reimann:volumepreserving} analyzes a nice extension to $\bH^3$ via a visual averaging procedure that is natural with respect to M\"obius transformations and that be can integrated to obtain a path through hyperbolic metrics.  The behavior of this path of metrics is controlled by the quasi-conformal constant of the initial vector field at infinity.  The visual average of a quasi-conformal vector field is divergence free, and so the flow is volume preserving (c.f. \cite[Appendices A and B]{McMullen:Renormalization}, especially \cite[Theorems B.10 and B.21]{McMullen:Renormalization}).  Thus the map $\Phi$ from Theorem \ref{inflexibility} is volume preserving (this statement is also absent from \cite[Theorem 5.6]{Brock-Bromberg:Inflexibility}).   

We consider only marked Kleinian surface groups with no parabolic cusps.  We now also restrict ourselves to representations with one geometrically infinite end.  Fix a closed, oriented surface $S$ of negative Euler characteristic.  
In light of Theorem \ref{ELT}, we may supply a pair $\nu = (\nu_-, \nu_+)$ of end invariants to obtain a discrete and faithful representation $\rho(\nu): \pi_1(S)\to \g(\nu)\le\G$ and homotopy equivalence $f_\nu: S\to M_{\rho(\nu)} = \bH^3/\g(\nu)$ inducing $\rho(\nu)$ on $\pi_1$, where $M_{\rho(\nu)}$ has end invariants $\nu$.  The $\G$-conjugacy class of $\rho(\nu)$ and the homotopy class of $f_\nu$ are uniquely determined by $\nu$. 
Let $\lambda\in \cEL(S)$ and $X\in \teich(S)$.  Then we have an orientation reversing isometry $\iota : M_{(X,\lambda)}\to M_{(\lambda, X)}$ such that $\iota \circ f_{(X,\lambda)}\sim f_{(\lambda,X)}$.  Without loss of generality, we will work with manifolds whose `$+$' end is geometrically infinite, and whose `$-$' end invariant is geometrically finite.  That is, we will consider manifolds with end invariants $\nu = (X,\lambda)\in \teich(S)\times \cEL(S)$. 

Fix $\lambda\in \cEL$ and take $\nu_0 = (X_0,\lambda)$ and $\nu_1 = (X_1,\lambda)$.  The goal of this section is to prove
\begin{theorem}\label{main}
With notation as above, we have an equality in bounded cohomology
\[\rho(\nu_0)^*\Vol = \rho(\nu_1)^*\Vol ~\in~ \Hbb^3(\pi_1(S);\bR).\]
\end{theorem}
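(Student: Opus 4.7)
The plan is to exploit the Geometric Inflexibility Theorem (Theorem \ref{inflexibility}) to realize $M_{\rho(\nu_0)}$ and $M_{\rho(\nu_1)}$ as two hyperbolic structures on $S\times\bR$ related by a volume preserving, globally bi-Lipschitz diffeomorphism in the preferred homotopy class, and then to exhibit an explicit bounded coboundary at the cochain level that witnesses the desired equality. Because $\nu_0=(X_0,\lambda)$ and $\nu_1=(X_1,\lambda)$ share the ending lamination $\lambda$, quasi-conformal deformation theory produces an equivariant $K$-quasi-conformal conjugacy $\partial\bH^3\to\partial\bH^3$ from $\rho(\nu_0)$ to $\rho(\nu_1)$, where $K$ is controlled by $d_{\teich(S)}(X_0,X_1)$.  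Theorem \ref{inflexibility} then provides a volume preserving $K^{3/2}$-bi-Lipschitz diffeomorphism $\Phi:M_{\rho(\nu_0)}\to M_{\rho(\nu_1)}$ in the preferred homotopy class $\Phi\circ f_{\nu_0}\sim f_{\nu_1}$.  Under the canonical isomorphism identifying bounded cohomology of $\pi_1(S)$ with that of either manifold via the marking, and using $\Phi\circ f_{\nu_0}\sim f_{\nu_1}$, it suffices to prove $\Phi^*[\widehat{\omega}_1]=[\widehat{\omega}_0]\in\Hbb^3(M_{\rho(\nu_0)};\bR)$, where $\widehat{\omega}_i$ denotes the volume cocycle on $M_{\rho(\nu_i)}$.

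I would establish this equality by constructing a bounded $2$-cochain $\beta\in\Cbb^2(M_{\rho(\nu_0)};\bR)$ with $d\beta=\Phi^*\widehat{\omega}_1-\widehat{\omega}_0$. For each singular $2$-simplex $\sigma:\Delta_2\to M_{\rho(\nu_0)}$ the two maps $\Phi\circ\str_0\sigma$ and $\str_1\circ\Phi\sigma$ are singular $2$-simplices in $M_{\rho(\nu_1)}$ that agree on the vertices of $\Delta_2$.  Lifting to $\bH^3$ and connecting corresponding points by hyperbolic geodesics produces an equivariant straight-line homotopy $\Delta_2\times[0,1]\to\bH^3$ that descends to a singular $3$-chain $P\sigma$ in $M_{\rho(\nu_1)}$.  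The resulting prism operator satisfies the standard chain homotopy identity $\partial P+P\partial=\str_1\circ\Phi_*-\Phi_*\circ\str_0$.  Setting $\beta(\sigma):=\int_{P\sigma}\omega_1$ and invoking Stokes' theorem for the closed form $\omega_1$ together with the identity $\Phi^*\omega_1=\omega_0$ yields $d\beta(\tau)=\int_{\str_1\Phi\tau}\omega_1-\int_{\str_0\tau}\omega_0=\Phi^*\widehat{\omega}_1(\tau)-\widehat{\omega}_0(\tau)$ for every singular $3$-simplex $\tau$.

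The main obstacle is showing that $\beta$ is uniformly bounded, or equivalently that the hyperbolic volume of $P\sigma$ admits an upper bound independent of $\sigma$.  Lifted to $\bH^3$, the two triangles $\tilde\Phi\circ\widetilde{\str_0\sigma}$ and $\widetilde{\str_1\circ\Phi\sigma}$ share the same three vertices and each has area at most $K^3\pi$, the geodesic one by Gauss--Bonnet and the other by the global bi-Lipschitz constant. Their corresponding edges are $(K^{3/2},0)$-quasi-geodesics joining the same pairs of endpoints, so Theorem \ref{Morse} bounds their Hausdorff distance by a constant depending only on $K$.  A filling argument in $\bH^3$ combining this boundary control with the bounded-area estimate produces a uniform pointwise bound on the distance between the two triangles along $\Delta_2$, which in turn bounds the volume of the straight-line prism.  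Once $\beta$ is bounded, we conclude $\Phi^*[\widehat{\omega}_1]=[\widehat{\omega}_0]$ in $\Hbb^3(M_{\rho(\nu_0)};\bR)$; pulling back along $f_{\nu_0}$ then gives $\rho(\nu_0)^*\Vol=\rho(\nu_1)^*\Vol$ in $\Hbb^3(\pi_1(S);\bR)$.
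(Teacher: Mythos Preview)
Your overall strategy matches the paper's: use Geometric Inflexibility to obtain a volume preserving $K^{3/2}$-bi-Lipschitz $\Phi:M_0\to M_1$ in the correct homotopy class, and then construct a bounded $2$-cochain $\beta$ with $\widehat\omega_0 - \Phi^*\widehat\omega_1 = d\beta$. The gap is in your boundedness argument. You assert that a filling argument yields a uniform pointwise bound on $d(\tau_0(x),\tau_1(x))$ for $x\in\Delta_2$, where $\tau_0=\tilde\Phi\circ\widetilde{\str_0\sigma}$ and $\tau_1=\widetilde{\str_1\Phi\sigma}$. This is false already on the edges. The Morse Lemma controls the \emph{Hausdorff} distance between the quasi-geodesic edge $\tau_0([i,j])$ and the geodesic edge $\tau_1([i,j])$, but not the parameterization. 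If $\Phi$ stretches the first half of a long edge by the maximal factor and compresses the second half, then at the midpoint parameter the two points are separated by a distance comparable to $(K^{3/2}-K^{-3/2})\cdot d(a,b)$, which is unbounded as the edge length $d(a,b)\to\infty$. Your straight-line prism therefore has tracks of unbounded length, and the volume estimate you sketch does not go through.

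The paper resolves this by replacing the straight-line homotopy $\tau_0(x)\rightsquigarrow\tau_1(x)$ with a two-stage homotopy: first move $\tau_0(x)$ to its nearest-point projection $r(\tau_0(x))$ onto the convex geodesic triangle $\im\tau_1$, then slide within $\im\tau_1$ to $\tau_1(x)$. The first stage has uniformly bounded tracks because hyperbolic triangles are thin and each quasi-geodesic edge is Hausdorff-close to the corresponding geodesic edge; the second stage contributes zero signed volume since the velocity lies in the $2$-plane $\im\tau_1$. The remaining complication is that $r$ need not carry edges of $\tau_0$ into the corresponding edges of $\tau_1$, which would ruin the chain-homotopy identity $dC(\sigma)=C(\partial\sigma)$. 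The paper handles this by using the edgewise nearest-point projections $r_{i,j}$ on thin collars $N_{i,j}$ of the edges and linearly interpolating to $r$ over those collars; this restores the edge-to-edge property while keeping both the track length and the level-surface areas uniformly bounded (Proposition~\ref{coboundary bounded}).
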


Now that $\lambda$ and $X_i$ are fixed, we abbreviate $M_i = M_{\nu_i}$, $\rho_i = \rho(\nu_i)$, $f_i = f_{\nu_i}$, and $\g_i = \g(\nu_i)$.    We claim that there is a $K$-bi-Lipschitz volume preserving diffeomorphism $\Phi: M_0\to M_1$, where $K^{2/3}$ is dilatation of the Teichm\"uller map $X_0\to X_1$, \ie  $K^{2/3}=\exp(2d_{\teich(S)}(X_0, X_1)) $.  

Indeed, recall that $X_0$ is the quotient of the domain of discontinuity $\Omega_0$ of $\rho_0$ by the action of $\rho_0$.  By Bers' Theorem \cite{Bers:uniformization}, there is a $K^{2/3}$-quasi-conformal homeomorphism $\varphi: \widehat\bC\to\widehat\bC$ so that $\rho' = \varphi\circ \rho_0\circ \varphi\inverse: \pi_1 (S) \to \G$ is discrete, faithful, and such that the quotient of the domain of discontinuity of $\rho'$ by the action of $\rho'$ corresponds to $X_1$.  By Theorem \ref{inflexibility}, we have a $K$-bi-Lipschitz volume preserving diffeomorphism $\Phi: M_0\to M' = \bH^3/\im\rho'$.  By construction, $X_1$ is the `$-$' end invariant of $M'$, and since $\Phi$ maps curves of bounded length exiting the geometrically infinite end of $M_0$ to curves of bounded length exiting an end of $M'$, $\lambda$ is the other end invariant of $M'$.  By Theorem \ref{ELT}, $\rho'$ is conjugate in $\G$ to $\rho_1$.  Then  $M_1 = M'$, and so $\Phi: M_0 \to M_1$ is desired map.    
We  now wish to consider the difference  
\[ \widehat\omega_0 - \Phi^*\widehat\omega_1 \in \Cbb^3(M_0;\bR).\]

We will now construct an explicit  $2$-cochain $C\in \Cb^2(M_0)$ such that \[dC(\sigma) = \int_{\str_0\sigma}\omega_0-\int_{\str_1\Phi_*\sigma} \omega_1=  \widehat\omega_0(\sigma) - \Phi^*\widehat\omega_1(\sigma).\]
To this end, let $\tau: \Delta_2\to M_0$ be continuous.  We define a homotopy $H(\tau): \Delta_2\times I \to M_1$ such that $H_1 =\str_1\Phi_*\tau$ and $H_0 = \Phi_*\str_0\tau$ as follows.  
First, we consider lifts $\tau_0:\Delta_2\to \bH^3$ of $\Phi_*\str_0\tau$ and $\tau_1: \Delta_2 \to \bH^3$ of $\str_1\Phi_*\tau$ to the universal cover $\bH^3\xrightarrow{\pi}M_1$ such that $\tau_0(v_i) = \tau_1(v_i)$ where $v_i$ are the vertices of the 2-simplex $\Delta_2$.  
\begin{remark}
We would like to define our homotopy to be the geodesic homotopy joining $\tau_0(x)$ to $r(\tau_0(x))$, where $r$ is the nearest point projection onto $\im\tau_1$.  However, it is imperative that edges of $\tau_0$ are mapped to edges of $\tau_1$, and it is not guaranteed that $r$ will accomplish this task.  The following construction of $H(\tau)$ fixes this potential problem.
\end{remark}
Let $[i,j] \subset \Delta_2$ be the edge joining vertices $v_i$ and $v_j$.  The image $\tau_1([i,j])$ is a geodesic segment, and so there is a nearest point projection $r_{i,j}: \bH^3\to \im \tau_1([i,j])$, and since $\im\tau_1\subset\bH^3$ is geodesically convex, we have the nearest point projection $r: \bH^3\to \im \tau_1$.  We note here for use later that by convexity of the distance function on $\bH^3$, each of the $r_{i,j}$ and $r$ are $1$-Lipschitz retractions.  

\begin{figure}[h]
\includegraphics[width=5in]{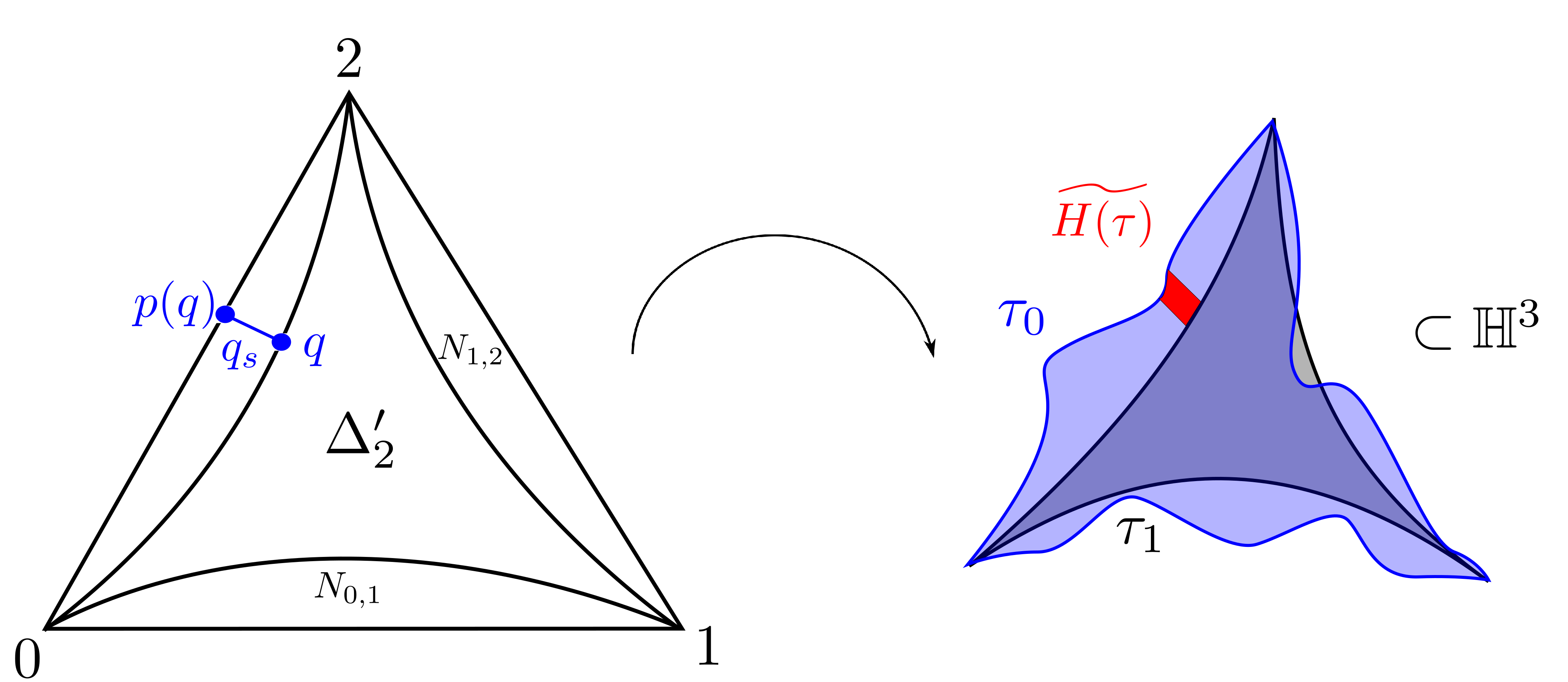}
\caption{Schematic of the definition of $H(\tau)$ in pieces.}
\label{simplex}
\end{figure}

For two points $x,y \in \bH^3$, let $[x,y]: I \to \bH^3$ be the unique geodesic segment, parameterized proportionally to arclength joining $x$ to $y$, \ie $[x,y](0) = x$, $[x,y](1) =y$ and $|[x,y]'(t)| = d(x,y)$.  First we define our map on the edges of $\Delta_2$.  Let $p\in [i,j]$, and define
\[\gamma_t(p) = 
\begin{cases}
[\tau_0(p), r_{i,j}(\tau_0(p))](2t), & 0 \le t\le \frac{1}{2} \\
[r_{i,j}(\tau_0(p)),\tau_1(p)](2t-1), & \frac{1}{2}\le t\le 1.
\end{cases}
\] 
Find regular, convex neighborhoods $N_{i,j}\subset (\Delta_2, d_{\str_0\tau})$  of $[i,j]$ that meet only at the vertices of $\Delta_2$, such that $N_{i,j}\subset \cN_{1/K}( [i,j])$ where distance is measured in the metric induced by $\str_0\tau$.  Let $N = N_{0,1} \cup N_{1,2}\cup N_{2,0}$ and $\Delta_2' = \Delta_2\setminus N$.

For $x\in \Delta_2'$ and $t\in I$, define 
\[\gamma_t(x) = 
\begin{cases}
[\tau_0(x), r(\tau_0(x))](2t), & 0 \le t\le \frac{1}{2} \\
[r(\tau_0(x)),\tau_1(x)](2t-1), & \frac{1}{2}\le t\le 1.
\end{cases}
\]
Finally, let $q \in \partial\cl(N_{i,j})\setminus [i,j]$ and $p(q)\in [i,j]$ be the closest point to $q$ in the $d_{\str_0\tau}$ metric.  
The geodesic segment joining $p$ to $p(q)$ is naturally parameterized proportionally to arclength by convex combinations of $q_s = s\cdot p + (1-s)\cdot(p(q))$, so that $q_1 = q$ and $q_0 = p(q)$.  Define now
\[\gamma_{\frac12}(q_s)= [r_{i,j}(p(q)), r(q)](s),\]
and 
\[\gamma_{t}(q_s)= \begin{cases}
[\tau_0(q_s),\gamma_{\frac12}(q_s)](2t), & 0\le t\le 1/2\\
[\gamma_{\frac12}(q_s), \tau_1(q_s)](2t-1), &  1/2\le t\le 1.
\end{cases}\]
Finally, take
\begin{align*}
H(\tau): \Delta_2\times I &\to M_1\\
(x,t) &\mapsto \pi(\gamma_t(x)).
\end{align*}

Notice that $H(\tau)$ is almost projection of the straight line homotopy between $\tau_0$ and $r\circ \tau_0$ concatenated with a homotopy between $r\circ \tau_0$ and $\tau_1$.  Instead, we constructed $H(\tau)$ to ensure that the edges of $\Delta_2$  do not land in the interior of $\im\tau_1$ at any stage of the homotopy.  We then linearly interpolated between the (potentially different) two maps on the edges over the region $N$.  $H(\tau)$ is piecewise smooth, and by convexity of $\im\tau_1$, the image of the later homotopy is contained entirely within $\tau_1$.   
Define $C\in \Cb^2(M_0)$ by linear extension of the rule
\[ C(\tau) = \int_{\Delta_2\times I }H(\tau)^*\omega_1.\]  

\begin{lemma} \label{coboundary lemma}
Let $\sigma: \Delta_3 \to M_0$. Then 
\[(\widehat\omega_0 - \Phi^*\widehat\omega_1)(\sigma)= dC(\sigma).\]

\end{lemma}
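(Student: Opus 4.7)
My plan is to prove the identity by promoting the cylinder construction used to define $C$ on $2$-simplices to an analogous construction on $3$-simplices, and then invoking Stokes' theorem, which is essentially trivial here because $\omega_1$ is already a top-degree form on $M_1$.

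First I would extend the recipe used to define $H(\tau)$ for $\tau : \Delta_2 \to M_0$ to a piecewise smooth map $H(\sigma) : \Delta_3 \times I \to M_1$ for any $\sigma : \Delta_3 \to M_0$. Lift $\Phi \circ \str_0 \sigma$ and $\str_1 \Phi_*\sigma$ to tetrahedra $\tau_0, \tau_1 : \Delta_3 \to \bH^3$ with matching vertices; since $\im \tau_1$ is the convex hull of four points, it is geodesically convex, as are each of its $2$-faces and $1$-faces. One builds $H(\sigma)$ using the nearest point projections onto $\im \tau_1$ and its sub-simplices, with nested convex neighborhoods of the $k$-skeleta of $\Delta_3$ (for $k=0,1,2$) used to force compatibility with the lower-dimensional construction. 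The critical property I need is that for every face inclusion $d^i : \Delta_2 \hookrightarrow \Delta_3$, the restriction $H(\sigma)|_{d^i(\Delta_2)\times I}$ agrees with $H(\sigma \circ d^i)$. By design, $H(\sigma)(\cdot, 0) = \Phi \circ \str_0 \sigma$ and $H(\sigma)(\cdot, 1) = \str_1 \Phi_* \sigma$.

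With such an $H(\sigma)$ in hand, I apply Stokes to the pullback $H(\sigma)^*\omega_1$ on $\Delta_3 \times I$. Since $\omega_1 \in \Omega^3(M_1)$ and $\dim M_1 = 3$, one has $d\omega_1 = 0$, so
\[ 0 \;=\; \int_{\Delta_3 \times I} H(\sigma)^* d\omega_1 \;=\; \int_{\partial(\Delta_3 \times I)} H(\sigma)^*\omega_1. \]
Decomposing $\partial(\Delta_3 \times I) = \partial \Delta_3 \times I \;\sqcup\; \Delta_3 \times \partial I$ with the usual orientation convention, the $\Delta_3\times\{0\}$ piece contributes $\int_{\Phi\circ \str_0\sigma}\omega_1 = \int_{\str_0\sigma}\Phi^*\omega_1 = \widehat\omega_0(\sigma)$, where the middle equality uses that $\Phi$ is a volume- and orientation-preserving diffeomorphism so $\Phi^*\omega_1 = \omega_0$. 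The $\Delta_3\times\{1\}$ piece contributes $\widehat\omega_1(\Phi_*\sigma) = \Phi^*\widehat\omega_1(\sigma)$. By the face compatibility of $H(\sigma)$, the lateral boundary contributes $\sum_{i=0}^{3} (-1)^i \int_{\Delta_2 \times I} H(\sigma\circ d^i)^*\omega_1 = \sum_{i=0}^{3}(-1)^i C(\sigma\circ d^i) = dC(\sigma)$. Collecting signs yields the claimed identity.

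The main obstacle is not the Stokes computation but the first step: defining $H(\sigma)$ on $\Delta_3 \times I$ so that every codimension-one lateral face matches $H(\sigma\circ d^i)$ on the nose, and so that the resulting map is piecewise smooth enough for Stokes to apply. This mirrors the explicit construction already carried out on $\Delta_2$, but with one additional layer of nested convex tubes around the $0$-, $1$-, and $2$-skeleta of $\Delta_3$ that routes points near lower-dimensional strata through the corresponding nearest-point retractions onto the $0$-, $1$-, and $2$-faces of $\im\tau_1$. Once this combinatorial check is in place, the rest of the argument is purely formal.
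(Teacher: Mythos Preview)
Your approach is correct but differs from the paper's. You build a $4$-cell $H(\sigma):\Delta_3\times I\to M_1$ with face compatibility and apply Stokes using $d\omega_1=0$. The paper never constructs such an $H(\sigma)$; instead it exploits that $M_1\cong S\times\bR$ is an open $3$-manifold so $\Hb^3_{\dR}(M_1)=0$, chooses a primitive $\eta$ with $d\eta=\omega_1$, and applies Stokes \emph{twice}: first to rewrite $\int_{\Phi_*\str_0\sigma-\str_1\Phi_*\sigma}\omega_1$ as an integral of $\eta$ over the $2$-cycle $(\Phi_*\str_0-\str_1\Phi_*)(\partial\sigma)$, and then on each prism $\Delta_2\times I$ to recover $\omega_1$. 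The only compatibility needed is the \emph{edge} compatibility of $H(\tau)$ already built into the $2$-dimensional construction---when two faces $\tau_k,\tau_{k'}$ of $\sigma$ share an edge, the lateral prism faces over that edge match, so the alternating sum $\sum(-1)^iH(\tau_i)_*[\partial(\Delta_2\times I)]$ reduces to $(\str_1\Phi_*-\Phi_*\str_0)(\partial\sigma)$.

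What each buys: the paper's route is more economical, since it uses exactly the $2$-simplex homotopy $H(\tau)$ already defined and nothing more; the extra layer of nested tubes you describe (around the $2$-skeleton of $\Delta_3$) is simply not needed. Your route avoids choosing a primitive for $\omega_1$ and is the textbook prism-operator argument, so it would transfer to situations where $\Hb^3_{\dR}\neq 0$; but that generality is irrelevant here, and the cost is the combinatorial construction you yourself flag as the main obstacle. If you want the shortest path, trade your unbuilt $H(\sigma)$ for the one-line observation that $\omega_1$ is exact on $M_1$.
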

\begin{proof}
Both $\Phi$ and $\Phi\inverse$ are volume preserving, so \[\int_{\str_0 \sigma}\omega_0 = \int_{\Phi_*\str_0 \sigma}{\Phi\inverse}^*\omega_0 = \int_{\Phi_*\str_0 \sigma}\omega_1,\]
so by linearity of the integral 
\[(\widehat\omega_0 - \Phi^*\widehat\omega_1)(\sigma)= \int_{\Phi_*\str_0\sigma - \str_1\Phi_*\sigma}\omega_1.\]

Since $\Hb^3(M_1) = 0$, there is a $2$-form $\eta\in \Omega^2(M_1)$ such that $d\eta = \omega_1$.  The exterior derivative is natural with respect to pullbacks and $\str_i$ are chain maps.  Applying Stokes' Theorem for manifolds with corners, we have
\begin{align}
\int_{\Phi_*\str_0\sigma - \str_1\Phi_*\sigma}\omega_1 &= \int_{\Phi_*\str_0\sigma - \str_1\Phi_*\sigma}d\eta \\ 
&= \int_{\partial(\Phi_*\str_0\sigma - \str_1\Phi_*\sigma)}\eta
=  \int_{(\Phi_*\str_0 - \str_1\Phi_*)(\partial\sigma)}\eta
\end{align}
We write $\partial \sigma = \tau_0-\tau_1+\tau_2-\tau_3$.  By the construction of $H(\tau_i)$, if  $\tau_k|_{[i,j]} = \tau_{k'}|_{[i',j']}$, then the equality $H(\tau_k)|_{[i,j]}(t) = H(\tau_{k'})|_{[i',j']}(t)$ holds for all $t\in I$.  Thus,
\begin{align}
\int_{(\Phi_*\str_0 - \str_1\Phi_*)(\partial\sigma)}\eta &= \sum_{i = 0}^3 (-1)^i\int_{\partial(\Delta_2\times I)}H(\tau_i)^*\eta \\
&= \sum_{i = 0}^3 (-1)^i\int_{\Delta_2\times I}H(\tau_i)^*\omega_1 \\
& = C(\partial\sigma) = dC(\sigma).
\end{align}
This is precisely what we wanted to show.
\end{proof}
The following proposition completes the proof of Theorem \ref{main}.
\begin{prop}\label{coboundary bounded}
There is a $c = c(K)$ such that for each $\tau: \Delta_2\to M_0$, we have \[\left|\int_{\Delta_2\times I } H(\tau)^*\omega_1\right| \le \pi K^2c^3.\]
In other words, $\|C\|_\infty \le \pi K^2c^3$, and so $C\in \Cbb^2(M_0)$.  
\end{prop}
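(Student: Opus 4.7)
The plan is to bound $|\int_{\Delta_2 \times I} H(\tau)^*\omega_1|$ by controlling the Jacobian of $H(\tau)$ pointwise and then integrating, exploiting that a substantial portion of the homotopy has two-dimensional image and so does not contribute.

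First I would establish a uniform Hausdorff distance bound between the images of $\tau_0 = \Phi_*\str_0\tau$ and $\tau_1 = \str_1\Phi_*\tau$ in $\bH^3$, valid for every $\tau$. The edges of $\tau_0$ are $K$-quasi-geodesics joining the same pairs of vertices as the corresponding geodesic edges of $\tau_1$, so by the Morse Lemma (Theorem \ref{Morse}) the Hausdorff distance between each such pair is at most some $D = D(K)$. Since $\bH^3$ is $\delta$-hyperbolic, $\im \tau_1$ lies in the $\delta$-neighborhood of its edges; and $\im \tau_0$, being a $K$-bi-Lipschitz image of a $\delta$-thin geodesic triangle in $\bH^3$, lies in the $K\delta$-neighborhood of its own edges. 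Hence $d_H(\im\tau_0, \im\tau_1) \le K\delta + \delta + D =: c_0(K)$, and in particular each geodesic segment making up the homotopy has length at most $c_0$.

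Second, I would observe that for $t \in [1/2, 1]$ the map $H(\tau)(\,\cdot\,, t)$ takes values in the two-dimensional geodesically convex set $\im\tau_1$, so $H(\tau)^*\omega_1 \equiv 0$ there and only $t \in [0, 1/2]$ contributes. On this region I would bound the Jacobian pointwise via $|\det DH| \le |\partial_t H| \cdot |\partial_{e_1}H \wedge \partial_{e_2} H|$. The time derivative is twice the velocity of a geodesic of length $\le c_0$, so $|\partial_t H| \le 2c_0$. For the spatial wedge, at fixed $x$, $s \mapsto H(x, s/2)$ is a geodesic and its $x$-variation is a Jacobi field in $\bH^3$ with boundary data $\partial_{e_i}\tau_0$ and $\partial_{e_i}(r\circ\tau_0)$; the explicit Jacobi field formula in constant curvature $-1$ over geodesics of length $\le c_0$ gives a bilinear estimate $|\partial_{e_1}H \wedge \partial_{e_2}H| \le c_1^2 \cdot W(x)$, where $c_1 = c_1(c_0)$ and $W(x)$ is a sum of the wedges $|\partial_{e_1}\tau_0 \wedge \partial_{e_2}\tau_0|$, $|\partial_{e_1}(r\tau_0) \wedge \partial_{e_2}(r\tau_0)|$ and the corresponding cross wedges. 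Because $r$ is a $1$-Lipschitz retraction onto the two-dimensional convex set $\im\tau_1$, its differential $Dr$ has rank $\le 2$ and operator norm $\le 1$, which dominates each of these terms by a uniform multiple of $|\partial_{e_1}\tau_0 \wedge \partial_{e_2}\tau_0|$.

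Integrating then yields $|C(\tau)| \le c_0 c_1^2 \cdot C \int_{\Delta_2} |\partial_{e_1}\tau_0 \wedge \partial_{e_2}\tau_0|\, dx$ for an absolute constant $C$; the last integral is the area of $\Phi \circ \str_0\tau$ counted with multiplicity, bounded by $K^2$ times the area of $\str_0\tau$. By Gauss-Bonnet any geodesic $2$-simplex in $\bH^3$ has area at most $\pi$, so $|C(\tau)| \le \pi K^2 c^3$ after absorbing constants into a suitable $c = c(K)$. The modified homotopy on the edge regions $N_{i,j}$ introduces no new difficulty: the projections $r_{i,j}$ are also $1$-Lipschitz retractions onto convex (in fact geodesic) subsets of $\im\tau_1$, and the same Jacobi field estimates apply. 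The main obstacle is that a naive bound on $|\partial_{e_i}H|$ only in \emph{norm} by $|\partial_{e_i}\tau_0|$ produces the unbounded product $|\partial_{e_1}\tau_0|\cdot|\partial_{e_2}\tau_0|$ when $\str_0\tau$ has long, nearly-parallel edges; one must instead exploit the rank-2 structure of $Dr$ to upgrade this to a \emph{wedge} bound, which is then integrable because hyperbolic triangles have uniformly bounded area.
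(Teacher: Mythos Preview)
Your overall architecture matches the paper's: you split the homotopy at $t=1/2$, observe the second half lands in the two-dimensional set $\im\tau_1$ and contributes no volume, bound the time derivative by a constant $c_0=c_0(K)$ via the Morse Lemma and $\delta$-thinness, and reduce the remaining integral to a uniform area bound on the slices $\gamma_t(\Delta_2)$.  The paper follows exactly this outline.

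The gap is in your area bound.  The claim that the \emph{cross} wedges $|\partial_{e_1}\tau_0 \wedge Dr(\partial_{e_2}\tau_0)|$ are dominated by a uniform multiple of $|\partial_{e_1}\tau_0 \wedge \partial_{e_2}\tau_0|$ because $Dr$ is $1$-Lipschitz with rank $\le 2$ is false as stated.  Take $u=(N,0,0)$ and $v=(N,1/N,0)$, so $|u\wedge v|=1$; a $1$-Lipschitz rank-$2$ linear map can send $v$ to $(-1/N,N,0)$, giving $|u\wedge Av|=N^2$.  The parallel transports appearing in the Jacobi expansion only compound this, since the transports from the two endpoints act by different isometries.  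So expanding the wedge bilinearly and bounding term by term does not work.

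Ironically, what you dismiss as the ``naive'' norm bound is exactly the correct route.  Convexity of the distance function in $\bH^3$ (equivalently, the Jacobi field estimate in nonpositive curvature) gives, for $x\in\Delta_2'$ and \emph{every} tangent direction $v$,
\[
|D\gamma_t(v)|\;\le\;(1-2t)\,|D\tau_0(v)|+2t\,|Dr\!\cdot\! D\tau_0(v)|\;\le\;|D\tau_0(v)|,
\]
since $r$ is $1$-Lipschitz.  This is a pointwise inequality of pulled-back metrics $\gamma_t^*g\le\tau_0^*g$ on $\Delta_2'$, and an inequality $P\le Q$ of positive semidefinite forms forces $\det P\le\det Q$; hence the area element of $\gamma_t$ is pointwise dominated by that of $\tau_0$, which integrates to at most $K^2\pi$.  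The paper states this as ``the maps $id_t:(\Delta_2,d_{\str_0\tau})\to(\Delta_t,d_t)$ are $cK$-Lipschitz,'' obtained from exactly this convexity argument together with the $1$-Lipschitz property of $r$.  The extra factor $c$ (rather than $1$) comes from the edge collars $N_{i,j}$, where the homotopy is not the straight-line homotopy to $r$ and one picks up the trajectory-length constant; you should treat that region separately rather than asserting the same Jacobi estimate applies verbatim.
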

 
\begin{proof}

With $t\in I$ fixed, call $\Delta_t = \Delta\times \{t\}$, and let $\vec n_t(x)$ denote the unit normal vector to the image of the surface $\gamma_t: \Delta_t\to \bH^3$ at $\gamma_t(x)$.  In what follows, $dA_t$ is the Riemannian area form for the pullback of the hyperbolic metric by $\gamma_t$,  $d_t$ is the distance function on $\Delta_t$ induced by this metric, and $d$ is the distance function on $\bH^3$.   We begin by estimating
\begin{align}
\left| \int_{\Delta_2\times I} H(\tau)^*\omega_1 \right| & = \left|\int_0^1  \int_{\Delta_t} \left\langle \frac{\partial H(\tau)}{\partial t} , \vec n_t \right\rangle ~dA_t~dt \right|\\ 
&\le \int_0^{\frac12} \int_{\Delta_t} \left| \left\langle \frac{\partial H(\tau)}{\partial t} , \vec n_t \right\rangle\right| ~dA_t~dt +\int_{\frac12}^1 \int_{\Delta_t} \left| \left\langle \frac{\partial H(\tau)}{\partial t} , \vec n_t \right\rangle\right| ~dA_t~dt \label{integral inequality}
\end{align}
Notice that for $1/2<t\le 1$, we have that $\left\langle \frac{\partial H(\tau)}{\partial t} , \vec n_t \right\rangle = 0$, because the trajectories $\gamma_t(x)$ are traveling orthogonally to $\vec n_t = \vec n_1$ within $\im\tau_1$,  for all $x$. By construction, for $0\le t\le 1/2$, $\left| \frac{\partial H(\tau)}{\partial t}(x,t)\right| = 2|\gamma_t'(x)| = 2d(\gamma_0(x), \gamma_{\frac12}(x))$.  Thus, by Cauchy-Schwartz we have
\[
\left|\left\langle \frac{\partial H(\tau)}{\partial t} , \vec n_t \right\rangle(x)\right| \le 
\begin{cases}
2d(\gamma_0(x), \gamma_{\frac12}(x)), & 0\le t\le 1/2\\
0, & 1/2<t\le 1.
\end{cases}
\]

Combining the above expression with inequality (\ref{integral inequality}), we have 
\begin{equation} \label{recipe}
\left| \int_{\Delta_2\times I} H(\tau)^*\omega_1 \right|\le 2\max_{x\in \Delta_2}\{d(\gamma_0(x), \gamma_{\frac12}(x))\} \int_0^{\frac12}\left( \int_{\Delta_t}dA_t\right)~dt.
\end{equation}

We now show that the distance $d(\gamma_0(x), \gamma_{\frac12}(x))$ is uniformly bounded, independently of $\tau$. 
 $\str_0 \tau|_{[i,j]}$ is a geodesic segment parameterized proportionally to its length $\ell_{i,j}$.  Since $\Phi$ is $K$-bi-Lipschitz, $\tau_0|_{[i,j]}$ is a $(K,0)$-quasigeodesic segment (parameterized proportionally to $\ell_{i,j}$).  Since $\bH^3$ is $\log\sqrt3$-hyperbolic, by the Morse Lemma (Theorem \ref{Morse}) $\tau_0([i,j])$ is no more than $c' = c'( \log\sqrt3,K)$ from $\tau_1([i,j])$, because $\tau_1([i,j])$ is the geodesic segment with the same endpoints as $\tau_0([i,j])$.  
Since hyperbolic triangles are $\log\sqrt3$-thin, given $x\in \Delta_2$, we can find an edge and a point $p \in [i,j]$ such that $x$ is distance at most $\log\sqrt3$ from $p$ in the metric induced by $\str_0\tau$.  If the $\str_0\tau$-segment between $x$ and $p$ passes through $\partial \cl(N)\setminus[i,j]$, call the point of intersection $q$.  Notice that $p = p(q)$ from our definition of $H(\tau)$.  Now $d(\gamma_{\frac12}(p), r(\tau_0(p)))\le c'$, so
\begin{align}\label{traj1}
d(\gamma_{\frac12}(p),\gamma_{\frac12}(x))&\le d(\gamma_{\frac12}(p), r(\tau_0(p)))+ d(r(\tau_0(p)), r(\tau_0(q)))+d(r(\tau_0(q)), \gamma_{\frac12}(x))\\
&\le  c' +1 +K\log\sqrt3
\end{align}
because $d(r(\tau_0(q)), \gamma_{\frac12}(x)) \le K\log \sqrt3$ and since $d_{\str_0\tau}(p,q)\le 1/K$ and $r$ is $1$-Lipschitz, $ d(r(\tau_0(p)), r(\tau_0(q)))\le 1$.  More importantly, 
\begin{align}
 d(\gamma_0(x), \gamma_{\frac12}(x)) & \le d(\gamma_0(x), \gamma_0(p))+d(\gamma_0(p), \gamma_{\frac12}(p)) + d(\gamma_{\frac12}(p),\gamma_{\frac12}(x))\\
 &\le K\log\sqrt3 + c' +(c'+1+K\log\sqrt3) \\ 
 &= K\log 3+2c' +1
 \end{align}
 Since $x$ was arbitrary, we have shown that 
 \begin{equation}\label{bounded tracks}
 \max_{x\in \Delta_2}\{d(\tau_0(x), \gamma_{\frac12}(\tau_0(x)))\} \le c,
 \end{equation} 
 where we define $c:=K\log3+2c'+1$.

We would like to show that the family of identity maps $id_t:(\Delta_2, d_{\str_0\tau}) \to (\Delta_t,d_t)$ are $cK$-Lipschitz on $\Delta_2$.   Let $x,y\in \Delta_2$, and consider the $\str\tau_0$-geodesic segment $[x,y]\subset \Delta_2$.  For $t=0$, we have that $d_0(x, y)\le Kd_{\str_0\tau}(x,y)$, because $\tau_0([x,y])$ is the $K$-Lipschitz image of a geodesic.  Partition the segment $[x,y]$ into intervals $[x=x_0, x_1]\cup[x_1,x_2]\cup ... \cup [x_{n-1}, x_n = y]$.  and approximate $\tau_0([x,y])$ as the concatenation of the geodesic paths $[\gamma_0(x_i),\gamma_0(x_{i+1})]$. By convexity of the hyperbolic metric, and since $r$ is $1$-Lipschitz, if $[x_i, x_{i+1}]\subset \Delta_2'$ we have 
\begin{align}\label{convexity of metric1}
d(\gamma_t(x_i), \gamma_t(x_{i+1})) & \le td(\gamma_0(x_i),\gamma_0(x_{i+1})) + (1-t)d(r(\gamma_0(x_i)),r(\gamma_{0}(x_{i+1})))\\
&\le td(\gamma_0(x_i),\gamma_0(x_{i+1})) + (1-t)d(\gamma_0(x_i),\gamma_{0}(x_{i+1}))\\
& = d(\gamma_0(x_i),\gamma_0(x_{i+1})).\label{convexity of metric3}
\end{align}
Assume now that $[x_i,x_{i+1}]\subset N$.  Then a similar analysis shows that $d(\gamma_t(x_i), \gamma_t(x_{i+1}))\le cd(\gamma_0(x_i),\gamma_0(x_{i+1})$.

Taking a limit over finer partitions, we see that the length $\ell(\gamma_t([x,y]))\le c\ell(\gamma_0([x,y]))$, and so $d_t(x,y)\le cd_0(x,y)$.  Therefore, $d_t(x,y)\le cKd_{\str_0\tau}(x,y)$ and the maps \[id_t: (\Delta_2, d_{\str_0\tau}) \to (\Delta_t,d_t), ~x\mapsto H(\tau)(x,t)\] are $cK$-Lipschitz for all $t\in [0,1]$.  

We can now estimate

\begin{equation} \label{bounded area}
\int_{\Delta_t}dA_t = \int_{id_t(\Delta_2)}dA_t = \int_{\Delta_2}id_t^*dA_t\le \int_{\Delta_2}(cK)^2dA_{\str_0\tau} \le \pi (cK)^2
\end{equation}
because the Lipschitz constant $cK$ bounds the Jacobian of $id_t$ by $(cK)^2$, and the area of a hyperbolic triangle is no more than $\pi$.  Combining estimates (\ref{recipe}), (\ref{bounded tracks}), and (\ref{bounded area}), we obtain

\[
\left| \int_{\Delta_2\times I} H(\tau)^*\omega_1 \right|\le 2c \int_0^{\frac12}(cK)^2\pi~dt = \pi K^2c^3,
\]
which completes the proof of the proposition and Theorem \ref{main}.

\end{proof}

Lemma \ref{coboundary lemma} and Proposition \ref{coboundary bounded} are slightly more general.

\begin{cor}\label{volume preserving bi-lip}
Let $\zeta_0 \in \Omega^3(M_0)$ and $\zeta_1\in \Omega^3(M_1)$, and suppose $F: M_0\to M_1$ is a bi-Lipschitz homeomorphism satisfying $\int_{\str_1 \sigma}F^{-1*}\zeta_0 = \int_{\str_1\sigma}\zeta_1$, for all continuous maps $\sigma: \Delta_3\to M_1$.  If the co-chains $\widehat{\zeta_i}$ are bounded, then \[[F^*\widehat{\zeta_1}] = [\widehat{\zeta_0}] \in \Hb_{\bb}^3(M_0;\bR) \cong \Hb_{\bb}^3(S;\bR).   \]
\end{cor}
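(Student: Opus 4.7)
The plan is to repeat the construction of Lemma \ref{coboundary lemma} and Proposition \ref{coboundary bounded}, substituting $F$ for $\Phi$, $\zeta_0$ for $\omega_0$, and $\zeta_1$ for $\omega_1$. Let $K$ denote the bi-Lipschitz constant of $F$. For each continuous $\tau:\Delta_2\to M_0$, construct the homotopy $H(\tau):\Delta_2\times I\to M_1$ using the exact recipe of Lemma \ref{coboundary lemma}, interpolating between $F_*\str_0\tau$ and $\str_1 F_*\tau$ by nearest-point projection on a regular neighborhood of the edges and geodesic homotopy on the interior. Then define $C\in \Cb^2(M_0;\bR)$ by $C(\tau)=\int_{\Delta_2\times I}H(\tau)^*\zeta_1$.

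To verify the identity $dC=\widehat{\zeta_0}-F^*\widehat{\zeta_1}$, I would mimic Lemma \ref{coboundary lemma} line-by-line. The volume-preserving property of $\Phi$ entered exactly once, in the equality $\int_{\str_0\sigma}\omega_0=\int_{\Phi_*\str_0\sigma}\omega_1$. Its replacement here is the chain of equalities
\[\widehat{\zeta_0}(\sigma)=\int_{\str_0\sigma}\zeta_0=\int_{F_*\str_0\sigma}F^{-1*}\zeta_0,\]
where the second step is the bi-Lipschitz change of variables formula (Rademacher's theorem guarantees that $F$ is a.e.\ differentiable), together with the hypothesis $F^*\widehat{\zeta_1}(\sigma)=\int_{\str_1 F_*\sigma}\zeta_1=\int_{\str_1 F_*\sigma}F^{-1*}\zeta_0$. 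Since $\Omega^4(M_0)=\{0\}$, the form $F^{-1*}\zeta_0$ is closed a.e., so the Stokes computation on $\Delta_3$ from Lemma \ref{coboundary lemma} identifies the difference $\widehat{\zeta_0}(\sigma)-F^*\widehat{\zeta_1}(\sigma)$ with $dC(\sigma)$ verbatim.

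The main obstacle is verifying that $C$ is bounded. The geometric estimates of Proposition \ref{coboundary bounded} — bounding the swept-out area $\int_{\Delta_t}dA_t\le \pi(cK)^2$ and the displacement $d(\gamma_0(x),\gamma_{1/2}(x))\le c$ for a constant $c=c(K)$ — depend only on $K$ and the $\log\sqrt{3}$-hyperbolicity of $\bH^3$, and so they go through without change. What must be supplied is the analogue of the pointwise bound $|\omega_1|=1$ used for the Riemannian volume form. In the intended applications (notably Section \ref{compactly supported classes}) $\zeta_1$ is a bounded smooth $3$-form such as $f\omega_1$ for a bounded, compactly supported $f$, so the pointwise bound $\|\zeta_1\|_\infty<\infty$ is immediate, and the analogue of the estimate in Proposition \ref{coboundary bounded} gives $\|C\|_\infty\le \pi K^2c^3\|\zeta_1\|_\infty$, completing the proof.
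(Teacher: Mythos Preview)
Your proposal is correct and follows the paper's own approach exactly: the paper simply records that Lemma \ref{coboundary lemma} and Proposition \ref{coboundary bounded} are ``slightly more general,'' and you have filled in the substitutions.

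One small wrinkle is worth tightening. You define $C(\tau)=\int_{\Delta_2\times I}H(\tau)^*\zeta_1$, but your displayed difference formula is expressed in terms of $F^{-1*}\zeta_0$:
\[
\widehat{\zeta_0}(\sigma)-F^*\widehat{\zeta_1}(\sigma)=\int_{F_*\str_0\sigma-\str_1F_*\sigma}F^{-1*}\zeta_0.
\]
Running the Stokes computation on $C$ as written produces $\int_{F_*\str_0\sigma-\str_1F_*\sigma}\zeta_1$ instead, so the identification is not quite ``verbatim.'' The gap closes once you note that the hypothesis --- equal integrals over \emph{every} geodesic tetrahedron in $M_1$ --- forces $F^{-1*}\zeta_0=\zeta_1$ almost everywhere by a Lebesgue differentiation argument, so the two integrands are interchangeable. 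Equivalently, you could simply define $C$ using $F^{-1*}\zeta_0$ throughout and avoid the switch. Either fix is immediate.

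Your remark that the argument really needs $\|\zeta_1\|_\infty<\infty$ (a pointwise bound on the form), rather than just boundedness of the cochain $\widehat{\zeta_1}$, is a genuine subtlety in the corollary as stated, and you resolve it the right way by observing that this holds in every application the paper makes.
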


Next, we show that singly degenerate bounded fundamental classes can be represented by a class defined on the whole manifold $S\times \bR$, but which has support only in the convex core.  This will come in handy when we prove Theorem \ref{relation 1}.  We remark that the following discussion is essentially standard \cite{Isometric}, but we include arguments here for completeness. 

We will now work with one marked hyperbolic manifold without parabolic cusps $f: S\to M= M_\nu $, and such that  $f_* = \rho(\nu): \pi_1(S) \to \g(\nu)<\G$.  Choose a point $p\in \core(M)$.  We define a new straightening operator  $\str_p:\Cb_\bullet(M)\to \Cb_\bullet(\core(M))$.  Let $\sigma: \Delta_n\to M$ and take any lift of the ordinary straightening $\widetilde{\str\sigma}: \Delta_n\to \bH^3$ to the universal cover $\pi:\bH^3\to M$.  Fix $\tilde{p}\in \pi\inverse(p)$, and let $\mathcal D = \{q\in \bH^3: d(\tilde p , q)\le d(\gamma( \tilde p) , q) \text{ for all } \gamma \in \g(\nu)\}$ be the Dirichlet fundamental polyhedron for $\g(\nu)$ centered at $\tilde p$; delete a face of $\mathcal D$ in each face-pair $(F, \gamma F)$ to obtain a fundamental domain for $\g(\nu)$, which we still call $\mathcal D$.  Then the vertices $v_0, ..., v_n$ of $\widetilde{\str\sigma}$ are labeled  by group elements $v_i = \gamma_i q_i$ where $\rho_\nu(g_i) = \gamma_i\in \g(\nu)$,  $q_i \in \mathcal D$, and $(g_i, q_i) \in \pi_1(S)\times \mathcal D$ is unique.  Define $\str_p \sigma = \pi(\sigma_p(\gamma_0, ..., \gamma_n))$, where $\sigma_p(\gamma_0, ..., \gamma_n)$ is the straightening of any simplex whose ordered vertex set is $(\gamma_0\tilde p, ..., \gamma_n\tilde p)$.  The definition is clearly independent of choices.  Since $p$ is in the convex core of $M$ and all of the edges of $\str_p(\sigma)$ are geodesic loops based at $p$, the edges of $\str_p \sigma$ (hence of all of $\im\str_p\sigma$) are contained in $\core(M)$; moreover, all maps are chain maps and the operator norm $\| \str_p \|\le 1$.  This is just because some simplices in a chain may collapse and cancel after applying $\str_p$.  Define \[\widehat{\omega_p}(\sigma) = \int_{\str_p\sigma}\omega.\]
\begin{remark}
It is not hard to see that the co-chain $\widehat{\omega_p}$ is essentially a topological description of the group co-chain $\rho^*\vol_{\tilde p}$.  
\end{remark}

Now we describe a prism operator on $2$-simplices (the definition extends to $n$-simplices, and so it can be shown that the prism operator defines a chain homotopy between $\str_p$ and $\str$, but we will not need this).  As before, for $x,y\in \bH^3$, $[x,y]: I\to \bH^3$ is the unique geodesic segment joining $x$ to $y$ parameterized proportionally to arclength.  Take the lift $\widetilde{\str\tau}$ with ordered vertex set $(\gamma_0q_0, \gamma_1q_1, \gamma_2q_2)$ where $q_i\in \mathcal D$, and define
\begin{align*}
H_p(\tau): \Delta_2\times I &\to M\\
 (x,t) &\mapsto \pi ([\widetilde{\str\tau}(x),\sigma_p(\gamma_0, \gamma_1, \gamma_2)(x)](t))
\end{align*}
Note that this is just projection of the straight line homotopy between specific lifts of $\str\tau$ and $\str_p\tau$.  
Recall that the prism operator from algebraic topology decomposes the space $\Delta_n\times I$ combinatorially as a union of $(n+1)$-simplices glued along their faces in a consistent way and is used to prove that homotopic maps induce chain homotopic maps at the level of chain complexes.  Apply the prism operator to  $\Delta_2\times I$, to obtain a triangulation comprised of  three tetrahedra inducing a chain $C = C_0+C_1+C_2$.  Then $C_p(\tau) = \str( H_p(\tau)_*C)$ is a straight $3$-chain satisfying $dC_p(\sigma) = C_p(\partial \sigma) = \str_p\sigma - \str\sigma$ for any $\sigma \in \Cb_3(M;\bR)$.  
Moreover,
\begin{equation}\label{prism}
\left|\int_{\str C_p(\tau)}\omega\right| \le 3v_3,
\end{equation}
because $\str C_p(\tau)$ is a sum of $3$ hyperbolic tetrahedra.  We have proved
\begin{lemma}\label{point support}
Let $M$ be a hyperbolic 3-manifold and $p\in \core(M)$.  Then 
 \[[\widehat{\omega}] = [\widehat{\omega_p}]\in \Hb_{\bb}^3(M).\]  
\end{lemma}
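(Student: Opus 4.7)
The plan is to promote the chain-level data the author has already assembled into a bounded $2$-cochain exhibiting $\widehat{\omega_p}-\widehat{\omega}$ as a coboundary. Define $\beta\in \Cb^2(M;\bR)$ on singular $2$-simplices by
\[ \beta(\tau) \;=\; \int_{\str C_p(\tau)}\omega, \]
and extend linearly. The bound \eqref{prism}, namely $|\beta(\tau)|\le 3v_3$, is exactly the statement that $\beta\in\Cb_{\bb}^2(M;\bR)$ with $\|\beta\|_\infty\le 3v_3$, so the only real content is identifying the coboundary $d\beta$.

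For a singular $3$-simplex $\sigma$, I would compute
\[ d\beta(\sigma) \;=\; \beta(\partial\sigma) \;=\; \int_{\str C_p(\partial\sigma)}\omega \;=\; \int_{C_p(\partial\sigma)}\omega, \]
using that $\str$ is the identity on already-straight chains. The classical prism identity for the chain homotopy $H_p$ applied to each $2$-simplex in $\partial\sigma$ gives the equality of $2$-chains
\[ \partial C_p(\partial\sigma) \;=\; (\str_p-\str)(\partial\sigma) - C_p(\partial^2\sigma) \;=\; \partial\bigl(\str_p\sigma-\str\sigma\bigr), \]
so the $3$-chains $C_p(\partial\sigma)$ and $\str_p\sigma-\str\sigma$ have equal boundaries. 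Their difference is therefore a $3$-cycle in $M$; since $M\simeq S\times\bR$ satisfies $H_3(M;\bR)=0$ (equivalently, the Riemannian volume form is globally exact, $\omega=d\eta$ for some $\eta\in\Omega^2(M)$), Stokes' theorem yields
\[ \int_{C_p(\partial\sigma)}\omega - \int_{\str_p\sigma-\str\sigma}\omega \;=\; \int_{\partial C_p(\partial\sigma) - \partial(\str_p\sigma-\str\sigma)}\eta \;=\; 0. \]
Combining these identities gives $d\beta(\sigma) = \widehat{\omega_p}(\sigma)-\widehat{\omega}(\sigma)$, i.e., $\widehat{\omega_p}-\widehat{\omega}=d\beta$ in $\Cb_{\bb}^3(M;\bR)$, so the two cocycles represent the same class in $\Hb_{\bb}^3(M;\bR)$.

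There is no serious obstacle: all of the work has already been done in constructing $\str_p$, $H_p(\tau)$, and $C_p$, and in verifying the uniform bound \eqref{prism}. The only care required is the bookkeeping in the prism identity and the appeal to $H_3(M;\bR)=0$ (or exactness of $\omega$ on $M$) to upgrade the chain-level boundary equality to an integral equality via Stokes. In particular, one should note that $\beta$ is manifestly $G$-invariant (when pulled back to the universal cover), consistent with the remark that $\widehat{\omega_p}$ is a topological incarnation of the group cocycle $\rho^*\mathrm{vol}_{\tilde p}$.
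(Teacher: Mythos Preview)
Your argument is correct and is essentially the paper's own proof, which is contained in the discussion immediately preceding the lemma; you have simply unpacked the (slightly elliptical) claim ``$dC_p(\sigma)=C_p(\partial\sigma)=\str_p\sigma-\str\sigma$'' more carefully than the paper does. One small remark: you invoke $H_3(M;\bR)=0$ to pass from the equality of boundaries to the equality of integrals, which uses $M\cong S\times\bR$; a cleaner route, valid for any hyperbolic $3$-manifold, is to observe that the difference $C_p(\partial\sigma)-(\str_p\sigma-\str\sigma)$ is \emph{literally} the boundary of the $4$-prism $-C_p(\sigma)$ (the chain homotopy extended one degree up), so Stokes and $d\omega=0$ suffice without any homological hypothesis.
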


\section{Compactly supported bounded classes}\label{compactly supported classes}

To prove Theorem \ref{relation 1}, we will construct a bi-Lipschitz embedding of the core of a singly degenerate manifold into a doubly degenerate one that is volume preserving away from a compact set.  We would like to ignore what happens near that compact set. 
d neighborhood.  The curves still approach the ending lamination and so the/a limiting sequence will exit an end.

If $M$ is a Riemannian manifold and $\omega\in \Omega^k(M)$ is a smooth $k$-form, we define the norm at a point $x\in M$ by \[\|\omega\|(x)  = \sup\{\omega_x(v_1, ..., v_k) : v_i\in T_xM, ~ \|v_i\|\le 1\},\] define $\|\omega\|_\infty = \sup\{\|\omega\|(x): x\in M\}\le \infty$.   
\begin{lemma}\label{bounded primitive}
Let $M$ be a hyperbolic manifold diffeomorphic to $S\times \bR$ with bounded geometry.  Let $\omega \in \Omega_c^3(M)$ be a compactly supported differential $3$-form.  Then there is an $\eta\in \Omega^2(M)$ such that $d\eta = \omega$ and $\|\eta\|_\infty < \infty$.  
\end{lemma}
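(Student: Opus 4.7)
The plan is to split $\omega$ into a class in compactly supported cohomology and an exact remainder.  Since $M$ is a connected, oriented, non-compact $3$-manifold, Poincar\'e duality gives $H_c^3(M;\bR)\cong H_0(M;\bR)=\bR$, the isomorphism being integration.  The class of $\omega$ is thus encoded by the single real number $c:=\int_M\omega$, so it suffices to produce a reference $3$-form $\omega_0\in\Omega_c^3(M)$ with $\int_M\omega_0=c$ admitting a bounded primitive $\eta_0$:  the remainder $\omega-\omega_0$ will then be exact in $H_c^3(M;\bR)$ via some $\beta\in\Omega_c^2(M)$ (automatically bounded by compact support), and $\eta:=\eta_0+\beta$ will solve the lemma.

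The key step is to build, using bounded geometry and the product structure of $M$, a smooth proper function $t:M\to\bR$ with uniformly bounded gradient and a bounded closed $2$-form $\Omega\in\Omega^2(M)$ satisfying $\int_{t\inverse(s)}\Omega=1$ for every regular value $s$.  Both exist via Theorem~\ref{model map}: in the singular Sol metric $ds^2=e^{2t}dx^2+e^{-2t}dy^2+dt^2$ on $\model M=S\times\bR$, the coordinate $t$ is $1$-Lipschitz (since $ds^2\ge dt^2$), while $dx\wedge dy$ is closed, has pointwise $ds$-norm identically $1$ (as $\|dx\|_{ds}=e^{-t}$ and $\|dy\|_{ds}=e^t$), and has unit integral on each slice $S\times\{s\}$ by the normalization of $|\QQ_0|$.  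Transferring via (a smoothing of) the bi-Lipschitz map $\Psi:\model M\to M$ produces $t$ and $\Omega$ on $M$; the singly degenerate and geometrically finite cases follow by handling each end analogously and patching across a compact core.

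Given $t$ and $\Omega$, choose $\chi\in C_c^\infty(\bR)$ with $\int\chi=1$ and support disjoint from $t(\supp\omega)$, and set
\[\omega_0:=c\,\chi(t)\,dt\wedge\Omega,\qquad \eta_0:=cF(t)\,\Omega,\qquad F(t):=\int_{-\infty}^t\chi(s)\,ds.\]
Then $\omega_0$ is compactly supported (as $t$ is proper); Fubini yields $\int_M\omega_0=c\int_\bR\chi(s)\bigl(\int_{t\inverse(s)}\Omega\bigr)\,ds=c$; and $d\eta_0=c\chi(t)\,dt\wedge\Omega+cF(t)\,d\Omega=\omega_0$ since $d\Omega=0$.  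Because $|F|\le 1$, we get $\|\eta_0\|_\infty\le|c|\,\|\Omega\|_\infty<\infty$.  Finally, $\omega-\omega_0\in\Omega_c^3(M)$ has total integral zero, so the identification $H_c^3(M;\bR)\cong\bR$ supplies the required compactly supported primitive $\beta$, and $\eta=\eta_0+\beta$ is the desired bounded primitive.

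The main difficulty is the construction of $t$ and $\Omega$ on a genuinely non-product hyperbolic manifold.  Bounded geometry is essential: without a uniform lower bound on injectivity radius, the fibers of any fibration $M\to\bR$ may be pinched arbitrarily, so no bounded closed $2$-form can integrate to a positive number over every fiber, and the argument above would collapse.
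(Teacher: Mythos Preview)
Your argument is correct and genuinely different from the paper's.  Both proofs hinge on producing a bounded closed $2$-form that generates $H^2$ on each end, but the constructions diverge.  You pull back the flat area form $dx\wedge dy$ from the singular Sol model $\model M$ via the model map $\Psi$, and your reference primitive $\eta_0=cF(t)\Omega$ is global from the outset; the compactly supported remainder is then handled by Poincar\'e duality in one clean stroke.  The paper instead builds its generator intrinsically inside $M$: it finds a bi-infinite minimizing geodesic $\ell$ exiting both ends, uses the injectivity radius bound to get a uniform tubular neighborhood, and writes down the Poincar\'e dual $\eta_\ell$ of $\ell$ as a bump form supported on that tube.  It then starts from an arbitrary primitive $\beta$ of $\omega$ and patches $\beta$ to multiples of $\eta_\ell$ on the ends via partitions of unity and de Rham correction terms.

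The trade-off: your argument is structurally cleaner (subtract a reference, kill the remainder with $H_c^3\cong\bR$), but it leans on $\Psi$ being a bi-Lipschitz \emph{diffeomorphism}, which is stronger than what Theorem~\ref{model map} literally asserts (a homotopy equivalence lifting to a quasi-isometry); you also need to smooth across the singular locus $\Sigma\times\bR$ of the Sol metric.  These are recoverable --- Minsky's bounded-geometry theorem does supply the bi-Lipschitz map, and the area form extends across cone points --- but they are genuine details you have elided.  The paper's construction is more self-contained: it never invokes the model and uses only the raw injectivity radius to get $\eta_\ell$, at the cost of a somewhat fiddly partition-of-unity computation.
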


\begin{proof}
Identify $M$ diffeomorphically with $S\times \bR$.  Since $S\times \bR$ is an open manifold, there is a differential form $\beta\in \Omega^2(S\times \bR)$, such that $d\beta = \omega$.  Without loss of generality, assume that $S\times (-1,1)$ contains the support of $\omega$.  Let $U = S\times (-\infty, -1)$, $W = S\times (-2, 2)$, and $V = S\times (1, \infty)$, and find a partition of unity $\psi = \{\psi_U, \psi_W, \psi_V\}$ subordinate to this cover.  Then $d\beta|_U = 0$ because the support of $\omega$ is contained in the complement of $U$.  The de Rham cohomology $\Hb^2(U) \cong \bR$, and so $[\beta|_U]$ represents the class of $b_U\in \bR$.  For example, since $[S\times \{-10\}]$ is a generator of the degree $2$ homology group, we have \[\langle [\beta|_U], [S\times \{-10\}]\rangle = \int_{S\times\{-10\}} \beta|_U = b_U. \]

Since $M$ has bounded geometry, we can find an embedded geodesic $\ell: \bR\to M$ such that the injectivity radius  $\inj_{\ell(t)}(M)>\epsilon$ for all $t$, and such that $\ell$ exits both ends of $M$.  To see this, choose a basepoint $p\in M$ and sequence $q_n\in M$ at distance $n$ from $p$. Find geodesic segments $[p,q_n]$ from $p$ to $q_n$ of length $n$ exiting an end of $M$.  By Arzela-Ascoli, we can extract a limit $\ell^+$, which is a geodesic ray; also find a ray $\ell^-$ based at $p$ exiting the other end of $M$.  The concatenation of these rays is a quasigeodesic and tracks a geodesic $\ell$ closely; $\ell$ is uniformly thick, because $M$ is.  Moreover, $\ell$ is minimizing in the sense that there is a constant $C>0$ such that $d(p, \ell(t))\ge |t|-C$.  This is because $\ell$ is essentially a limit of minimizing segments making linear progress away from $p$ (see \cite{MahanLecuire} for more about minimizing geodesics).  Since the injectivity radius of $M$ is bounded away from 0, and $\ell$ is minimizing, $\ell$ only comes within $\epsilon$ of itself finitely many times.  Let $m = \max\{n: \{t_1, ... ,t_n\} \text{ are pairwise distinct and } d(\ell(t_1), \ell(t_i))<\epsilon \text{ for all $i$}\}$.  

  Then the Poincar\'e dual of $\im\ell$ is a generator for the de Rham cohomology $\Hb^2(S\times \bR)$; it can be represented by a form $\eta_\ell$ whose support is contained in a tubular $T$ neighborhood of $\ell$.  Indeed, by assumption, the $\epsilon$ neighborhood of $\ell$ is such a tubular neighborhood, foliated by planes meeting $\ell$ orthogonally.  Let $\mathcal O$ be an orthonormal section of the $2$-frame bundle over $T$ which is tangent to each disk $D_\epsilon(\ell(s))$, and let $\psi_\ell$ be a bump function on the disk $D_\epsilon(\ell(s))$ which is  constant on $D_{\frac\epsilon2}(\ell(s))$,  decreases radially to $0$, and has integral $1$.  Then $\eta_\ell=\psi_\ell\mathcal O^*\in \Omega^2(S\times \bR)$ is our desired representative for the poincar\'e dual of $\ell$, and by construction $\|\eta_\ell\|_\infty \le  \frac{m}{\area(D_{\frac\epsilon2})}$. 

Then $b_U\eta_\ell|_U - \beta|_U = d\alpha_U$ for some $\alpha_U\in \Omega^1(U)$.  Construct a new form that interpolates from $b_U\eta_\ell|_U$ to $\beta|_U$ as follows 
\[\gamma_U := \psi_Ub_U\eta_\ell|_U +\psi_W\beta|_U.\]
Then $\gamma_U$ is not closed, but by adding a correction term $d\psi_U\wedge\alpha_U$, one  checks that $d(\gamma_U+d\psi_U\wedge\alpha_U) = 0$.  Moreover, since $d\psi_U\wedge\alpha_U$ has support in $U\cap W$, and $\gamma_U = b_U\eta_\ell|_U$ away from $W$, we have that 
\[\langle [\gamma_U +d\psi_U\wedge\alpha_U], [S\times \{-10\}]\rangle = \int_{S\times\{-10\}} b_U\eta_\ell= b_U.\]
Thus there is an $\alpha_U'\in \Omega^1(U)$ such that $\gamma_U+d\psi_U\wedge\alpha_U - \beta|_U = d\alpha_U'$.  Since $\gamma_U+d\psi_U\wedge\alpha_U$ and $ \beta|_U$ agree near the boundary of $U$, $d\alpha_U'$ is zero there, and so it extends to a $2$-form on all of $S\times \bR$ with support in $U$.  

Repeat these steps on $V$, and define 
\[\eta : = \psi_U(\gamma_U+d\psi_U\wedge \alpha_U) + \psi_W ( \beta - d\alpha_U' - d\alpha_V') + \psi_V(\gamma_V + d\psi_V\wedge \alpha_V).\] A tedious but straightforward calculation shows that $d\eta = \omega = d\beta$. Observe that \[\|\beta|_W -d\alpha'_U|_W - d\alpha'_V|_W+d\psi_U\wedge \alpha_U + d\psi_V\wedge  \alpha_V\|_\infty = c<\infty,\]
because $W$ is precompact, the supports of both $d\psi_U\wedge d \alpha_U$ and $d\psi_V\wedge d \alpha_V$ are compact, and all functions are smooth. Finally we estimate
\[ \|\eta\|_\infty \le m\cdot\max\left\{ \frac{|b_U|}{\area(D_{\frac\epsilon2})}, \frac{|b_V|}{\area(D_{\frac\epsilon2})}\right\} + c,\]
 so $\|\eta\|_\infty<\infty$ and we are finished.

\end{proof}
The previous result was the main technical step for proving the main result from this section, which now follows easily.  The idea is that differential forms define cochains by integration on geodesic simplices.  If the pointwise norm of the differential form is globally bounded with respect to the ambient hyperbolic metric, then this cochain is bounded.  Stokes' Theorem then provides the link between bounded primitives of differential forms and bounded primitives of cochains in negative curvature.  

\begin{prop}\label{compact support}
Let $\rho: \pi_1(S)\to \G$ be Kleinian with bounded geometry and let $f: M_\rho\to \bR$ be compactly supported and smooth.  Then the class $[\widehat{f\omega}] = 0\in \Hb_{\bb}^3(M_\rho;\bR)$.
\end{prop}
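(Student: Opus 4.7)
The plan is to leverage Lemma \ref{bounded primitive} in a straightforward way: produce a bounded primitive $\eta$ of the differential form $f\omega$, then transfer this primitive relationship to the cochain level via straightening and Stokes' theorem.

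First, I would apply Lemma \ref{bounded primitive} to the smooth, compactly supported $3$-form $f\omega \in \Omega^3_c(M_\rho)$. Since $\rho$ is Kleinian without parabolics and $M_\rho$ has bounded geometry, it is diffeomorphic to $S\times \bR$ with positive injectivity radius, so the hypotheses are satisfied. We obtain $\eta \in \Omega^2(M_\rho)$ with $d\eta = f\omega$ and $\|\eta\|_\infty < \infty$ as a pointwise norm.

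Next, I would define a $2$-cochain $\hat\eta \in \Cb^2(M_\rho;\bR)$ by linear extension of the rule
\[\hat\eta(\tau) = \int_{\str \tau} \eta\]
for continuous $\tau: \Delta_2 \to M_\rho$. Boundedness of $\hat\eta$ follows immediately: since $\str \tau$ is either degenerate or a totally geodesic hyperbolic triangle, its area is bounded by $\pi$ (the maximal area of an ideal hyperbolic triangle), and so
\[|\hat\eta(\tau)| \le \|\eta\|_\infty \cdot \area(\str \tau) \le \pi\|\eta\|_\infty,\]
giving $\hat\eta \in \Cb_{\bb}^2(M_\rho;\bR)$.

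Finally, I would verify $d\hat\eta = \widehat{f\omega}$ using that $\str$ is a chain map and Stokes' theorem applied to the geodesic tetrahedron $\str\sigma$:
\[d\hat\eta(\sigma) = \hat\eta(\partial \sigma) = \int_{\str \partial \sigma} \eta = \int_{\partial \str \sigma} \eta = \int_{\str \sigma} d\eta = \int_{\str \sigma} f\omega = \widehat{f\omega}(\sigma).\]
This exhibits $\widehat{f\omega}$ as a bounded coboundary, so $[\widehat{f\omega}] = 0 \in \Hb_{\bb}^3(M_\rho;\bR)$. The only substantive content here is Lemma \ref{bounded primitive}; once the bounded primitive $\eta$ is in hand, the passage to the cochain level is essentially automatic because the area of geodesic triangles in $\bH^3$ is uniformly bounded — this is the classical reason that $L^\infty$ differential forms on hyperbolic manifolds define bounded cocycles. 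No further use of the bounded geometry hypothesis is needed beyond what Lemma \ref{bounded primitive} already required.
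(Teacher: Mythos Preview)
Your proof is correct and follows essentially the same approach as the paper's own argument: apply Lemma \ref{bounded primitive} to obtain a bounded primitive $\eta$ of $f\omega$, define $\widehat\eta$ by integrating $\eta$ over straightened triangles, and use Stokes' theorem together with the uniform area bound $\pi$ on hyperbolic triangles to conclude that $\widehat{f\omega} = d\widehat\eta$ is a bounded coboundary. Your write-up is in fact slightly more careful than the paper's (which contains a minor typo writing $\widehat\omega$ for $\widehat{f\omega}$).
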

\begin{proof}
 By Lemma \ref{bounded primitive}, there is an $\eta\in \Omega^2(M)$ such that $d\eta = f\omega$ and $\|\eta\|_\infty <\infty$.  Since $\str$ is a chain map, by Stokes' Theorem, \[\widehat{\omega}(\sigma)  = \int_{\str\sigma}\omega  = \int_{\str\partial\sigma}\eta = \widehat\eta(\partial\sigma).\]
This means that $\widehat{\omega} = d\widehat{\eta}$. Since the area of a hyperbolic triangle is bounded by $\pi$,   
\[\left|\widehat\eta(\tau)\right| =  \left| \int_{\str\tau }\eta \right| \le  \int_{\str\tau} \|\eta\|~dA\le \pi\|\eta\|_\infty. \]
This shows that $\widehat\eta\in \Cb_{\bb}^2(M)$, and so $[\widehat{\omega}]=0$ in bounded cohomology.  
\end{proof}

\section{Volume preserving limit maps} \label{volume preserving limit maps}
In this section, we consider maps of singly degenerate manifolds into doubly degenerate manifolds that have an end invariant in common.  Since the doubly degenerate manifold is quasi-conformally rigid, we cannot apply the Geometric Inflexibility Theorem to obtain a bi-Lipschitz, volume preserving map.  Instead, we take a limit of such maps.  We will only have control on the bi-Lipschitz constant away from the convex core boundary, so we have to modify our map near it.

Let $\lambda_-, \lambda_+ \in \cEL_{\bb}(S)$, and set $\nu = (\lambda_{\scriptscriptstyle-},\lambda_+)$; then $\inj M_{\nu} = \epsilon>0$.  Then we have a model manifold $\model M_\nu$ and map \[\Psi_\nu : \model M_\nu \to M_{\nu} \] as in Theorem \ref{model map}, a Teichm\"uller geodesic $t\mapsto Y_t $, $t\in \bR$ and pleated surfaces $f_n: X_n \to M_\nu$ for each $n\in \bZ$ as in Theorem \ref{ending ray}.   Since $\Psi_\nu$ is an $(L,c)$-quasi-isometry and $\Psi_\nu(~\cdot ~, n) = f_n$, there is $k\in \bN$ such that 
\begin{equation}\label{nu-separation}
d_{M_\nu}(\im f_{kn}, \im f_{k(n+1)}) > 1
\end{equation} for all $n$.

Let $\nu_{i} = (\lambda_-, Y_{ki})$, $\rho_i = \rho(\nu_i):\pi_1(S)\to \G$, and $M_i = M_{\rho_i}$.  Since $t\mapsto Y_t$ is geodesic, \[d_{\teich(S)} (Y_{ki}, Y_{k(i+1)}) =  \frac12 \log (e^{2k}),\] so by Theorem \ref{inflexibility} we have volume preserving $K= e^{3k}$-bi-Lipschitz diffeomorphsims \[\phi_i : M_i \to M_{i+1} \] and constants $C_1, C_2 >0$ independent of $i$ such that \[\log \bilip (\phi_i, p) \le C_1 e^{-C_2d(p, M_i \setminus \core(M_i))}\] for all $p\in M_i$.   As in Theorem \ref{model map}, there are models $\model M_i$ and maps \[\Psi_i : \model M_i = (S\times (-\infty, ik], ds) \to \core(M_i).\]  Observe that the inclusion $\model M_i \to \model M_\nu$ is  an isometric embedding with respect to the path metric on its image.  Note that $[\rho_i]$ converges to $[\rho_\nu]$ algebraically. 

Let $S_n = S\times \{n\}\subset S\times \bR$.  By Theorem \ref{model map}, for $i\in \bN\cup\{0\}$ and $n\le ki$ the maps $\Psi_i (~\cdot ~ , n): X_n^i \to M_i$ are pleated maps.
We have the following analogue of Inequality (\ref{nu-separation}):
\begin{equation}\label{separation}
d_{M_i}(\Psi_i(S_{kn}), \Psi_i(S_{k(n+1)})) > 1
\end{equation} for all $n< i$.

\begin{prop}\label{volume preserving limit}
 For every $\epsilon' >0$, there is a compact set $\mathcal K \subset \core (M_0)$ and a volume preserving $(1+\epsilon')$-bi-Lipschitz embedding $\Phi_\nu : \core(M_0)\setminus \mathcal K \to  M_\nu$ in the homotopy class determined by $\rho_0$ and $\rho_\nu$.   
\end{prop}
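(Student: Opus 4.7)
The plan is to construct $\Phi_\nu$ as a subsequential limit of the compositions
\[
F_i := \phi_{i-1}\circ\phi_{i-2}\circ\cdots\circ\phi_0 : M_0 \longrightarrow M_i.
\]
Each $F_i$ is volume preserving (being a composition of volume preserving maps) and realizes the homotopy class from $\rho_0$ to $\rho_i$. Since $[\rho_i]\to[\rho_\nu]$ algebraically (the models $\model M_i$ exhaust $\model M_\nu$ and pleated surfaces stabilize), with suitable basepoint choices any pointwise limit of the $F_i$ will be interpreted as landing in $M_\nu$ in the homotopy class prescribed by $\rho_0$ and $\rho_\nu$.

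The main estimate is pointwise: by the chain rule and Theorem \ref{inflexibility},
\[
\log \bilip(F_i, p) \;\le\; \sum_{j=0}^{i-1} C_1 \exp\!\bigl(-C_2\, d_{M_j}(F_j(p),\, M_j\setminus\core(M_j))\bigr),
\]
so the whole argument reduces to establishing the following \emph{linear escape} property: there exist constants $\alpha, b > 0$ such that for every $p\in\core(M_0)$ with $d_{M_0}(p, \partial\core(M_0))\ge D$,
\[
d_{M_j}\!\bigl(F_j(p),\, \partial\core(M_j)\bigr) \;\ge\; D + \alpha j - b \quad\text{for all } j.
\]
Granted this, the series above is dominated by a convergent geometric series with ratio $e^{-C_2\alpha}<1$, and its total can be made smaller than $\log(1+\epsilon')$ by taking $D$ sufficiently large.

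The linear escape is the main technical step and is where the bounded-geometry assumption on $\lambda_\pm$ is essential. The model maps $\Psi_i : \model M_i \to \core(M_i)$ are $(L,c)$-quasi-isometries with uniform constants, and the isometric inclusion $\model M_0 \hookrightarrow \model M_j$ together with inequality (\ref{separation}) shows that successive pleated surfaces $\Psi_j(S_{kn})$ are uniformly separated in $M_j$; hence a point with model coordinates $(x,s)$, $s\le 0$, lies at distance roughly proportional to $kj-s$ from $\partial\core(M_j)$. Without bounded geometry the model metric could develop long product regions and this lower bound could fail. The induction step matches $F_j(p)$ with $\Psi_j\circ\Psi_0^{-1}(p)$ up to uniformly bounded error: deep inside $\core(M_j)$ the map $\phi_j$ is nearly isometric by Theorem \ref{inflexibility}, and the pleated-surface sweepouts determining $\Psi_j$ and $\Psi_{j+1}$ come from the same Teichm\"uller geodesic, so $\phi_j\circ\Psi_j$ and $\Psi_{j+1}$ agree up to bounded error on the part of the model they share.

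With the pointwise bi-Lipschitz constants of $F_i$ controlled by $1+\epsilon'$ uniformly on $\core(M_0)\setminus\mathcal K$, where $\mathcal K$ is the compact $D$-collar of the pleated-surface boundary $\partial\core(M_0)$, Arzel\`a-Ascoli extracts a subsequential limit $\Phi_\nu : \core(M_0)\setminus \mathcal K \to M_\nu$. The $(1+\epsilon')$-bi-Lipschitz bound passes to the limit, and in particular the lower bound forces $\Phi_\nu$ to be injective, so it is an embedding. Volume preservation also passes to the limit: each $F_i$ has Jacobian identically $1$, and a $C^0$-limit of uniformly bi-Lipschitz volume preserving maps is volume preserving. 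The homotopy class is the right one by construction. The principal obstacle throughout is the inductive model-coordinate control that produces linear escape from $\partial\core(M_j)$; everything else is soft (Arzel\`a-Ascoli, algebraic convergence) or a direct application of previously established results.
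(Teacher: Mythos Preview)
Your proposal is correct and follows essentially the same approach as the paper: compose the inflexibility maps, establish linear escape from the core boundary by induction, sum the resulting geometric series, and extract a limit by Arzel\`a--Ascoli. The paper's version of the inductive step is slightly more concrete than yours: rather than asserting that $\phi_j\circ\Psi_j$ and $\Psi_{j+1}$ agree up to bounded error, it tracks a Bers-short curve $\alpha$ on the pleated surface $\Psi_0(S_{-N})$, uses the inductive bi-Lipschitz bound to control the length of $\Phi_{n-1}(\Psi_0(\alpha))$, and then applies a ``short curves are near their geodesic representatives'' lemma (Lemma~\ref{close curves}) to pin $\Phi_{n-1}(\Psi_0(S_{-N}))$ within bounded Hausdorff distance of $\Psi_n(S_{-N})$; this avoids the mild circularity in your phrasing (you invoke ``$\phi_j$ nearly isometric deep inside'' before establishing that the image is deep inside).
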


\begin{proof}
The $M_{i}$ will converge geometrically to $M_\nu$, and we will extract a limiting map from the sequence $ \{\Phi_i= \phi_i\circ ...\circ\phi_0|_{\core(M_0)\setminus \mathcal K}\}$ by passing to universal covers.  As long as we can control the bi-Lipschitz constant of $\Phi_i$, we can apply the Arzel\`a--Ascoli Theorem to obtain a convergent subsequence with limit $\widetilde{\Phi_\nu}: \widetilde {\core(M_0)\setminus \mathcal K} \to \bH^3$.  By the algebraic convergence of $[\rho_n]\to [][\rho_\nu]$, $\widetilde{\Phi_\nu}$ is $(\rho_0, \rho_\nu)$-equivariant and hence descends to a map $\Phi_\nu: \core(M_0)\setminus \mathcal K \to M_\nu$ of quotients. The limiting map will be volume preserving away from $\mathcal K$ and $(1+\epsilon')$-bi-Lipschitz, where $\mathcal K$  depends on both $\epsilon'$ and $K$. 

Now we choose $\mathcal K$.    For $N \in \bN$, $\Psi_0(~\cdot~, -N)$ is an immersed homotopy equivalence.  By minimal surface theory \cite{FHS:embedded}, for any $\epsilon''>0$, there is an embedding $g_{-N}: S_{-N} \to \cN_{\epsilon''}(\Psi_0(S_{-N}))$ that is homotopic to $\Psi_0(~\cdot~, -N)$.  By the homeomorpshism theorem of Waldhausen \cite{Waldhausen:homeo}, $\partial \core(M_0)$ and $\im g_{-N}$ bound an embedded submanifold $\mathcal K_N$ homeomorphic to $S\times I$.  

 By Inequality (\ref{separation}), since $\partial \core(M_0) =\Psi_0(S_0)$, 
 \[d_{M_0}(\Psi_0(S_0), \Psi_0(S_{-N})) > \frac Nk.\]   
 
 Choose $N$ large enough so that, using the above estimate and Theorem \ref{inflexibility},
 \[\bilip (\phi_0, \core(M_0)\setminus \mathcal K_{N}) \le (1+\epsilon')^{1-e^{-C_2}}< (1+\epsilon'),\] and take $\mathcal K = \mathcal K_{N+1}$.    

Now we show, by induction, that for $n\ge 0$, 
 \begin{equation}\label{inductive hyp}
 \log\bilip(\phi_n, \Phi_{n-1} (\Psi_0(S_{-N})))\le (1-e^{-C_2})\log(1+\epsilon') e^{-C_2n}.
 \end{equation} 
 From this it will follow that for all $n\ge 0 $,  
 \begin{align}\label{global bound}
 \log\bilip(\Phi_n, \Psi_0(S_{-N}))&\le (1-e^{-C_2})\log(1+\epsilon') \sum_{j = 0}^ne^{-jC_2}\\ 
& < \log(1+\epsilon')\frac{1-e^{-C_2}}{1-e^{-C_2}}=\log(1+\epsilon').
 \end{align}
 The base case was covered, above.  So assume that (\ref{inductive hyp}) holds for all $m<n$, so that $\bilip(\Phi_{n-1}, \Psi_0(S_{-N}))< (1+\epsilon')$.
 It suffices to show that there is some constant $e\ge 0$ such that \[d_{M_n}(\Phi_{n-1}(\Psi_0(S_{-N})), \partial \core(M_n))\ge n+\frac Nk - e.\]
To this end, we would like to show that $d^H_{M_n}(\Phi_{n-1}(\Psi_0(S_{-N})), \Psi_n(S_{-N}))<e$ for some $e$, where $d^H_X$ denotes the Hausdorff distance on closed subsets of a metric space $X$.    
We use the following well known fact, which follows from the observation that the projection onto a geodesic in $\bH^3$ contracts by a factor of at least $\cosh R$, where $R$ is the distance to the geodesic.
\begin{lemma}[{\cite[Lemma 4.3]{Minsky:ELTbdd}}]\label{close curves}
Let $M$ be a hyperbolic manifold, $\alpha: S^1 \to M$ a homotopically non-tivial closed curve, and $\alpha^*\subset M$ its geodesic representative.  Then $d^H_M(\alpha, \alpha^*)\le R$, where $R = \cosh\inverse \left(\frac{\ell_M(\alpha)}{\ell_M(\alpha^*)}\right) +\frac12 \ell_M(\alpha)$.   
\end{lemma}

Let $\alpha\subset S_{-N}$ be a simple closed $X_{-N}^0$-geodesic whose length is at most $L_0$---the Bers constant for $S$.   By property (ii) in Theorem \ref{model map}, the $(S, |\QQ_0|)$-length of $\alpha$ is at most $LL_0+c$, where $L$ and $c$ depend only on $S$ and $\epsilon$.  A second application of property (ii) in Theorem \ref{model map} shows that the $X_{-N}^n$-length of $\alpha$ is at most $L^2L_0+Lc+c$.  Since $\bilip(\Phi_{n-1}, \Psi_0(S_{-N}))<(1+\epsilon')$, $\ell_{M_n}( \Phi_{n-1}(\Psi_0(\alpha)))\le (1+\epsilon')L_0$.  The $M_n$-length of the geodesic representative of $\alpha$ is at least $2\epsilon$, so applying Lemma \ref{close curves}, there is a constant $L' = L'( S, \epsilon, \epsilon')$ such that 
$d_{M_n}^H(\Phi_{n-1}(\Psi_0(\alpha)), \Psi_n(\alpha))\le 2L'$.  There is a universal bound $D= D(\epsilon)$ on the diameter of an $\epsilon$-thick hyperbolic surface.  Thus \begin{equation}\label{surface distance}
d_{M_n}^H( \Psi_n(S_{-N}),\Phi_{n-1}(\Psi_0(S_{-N})))\le D+(1+\epsilon')D+2L' = e.\end{equation}

Recall that $\partial \core(M_n) = \Psi_n(S_{kn})$.  We use Inequalities (\ref{surface distance}) and (\ref{separation}) together with the triangle inequality to estimate
\begin{align*}
d_{M_n}(\partial \core(M_n), \Phi_{n-1}(\Psi_0(S_{-N}))) & \ge d_{M_n}(\Psi_n(S_{kn}), \Psi_n(S_{-N})) - d^H_{M_n}( \Psi_n(S_{-N}),\Phi_{n-1}(\Psi_0(S_{-N}))) \\ 
&\ge \frac Nk+n -e.
\end{align*}

We (retroactively) assume that $N/k-e$ was large enough so that $C_1e^{-C_2(\frac Nk-e)}<(1-e^{-C_2})\log(1+\epsilon')$.  Then $\log\bilip(\phi_n, \Phi_{n-1}( \Psi(S_{-k})))\le (1-e^{-C_2})\log(1+\epsilon')e^{-C_2n}$, which completes the proof of (\ref{inductive hyp}) and also (\ref{global bound}).  
\end{proof}

\begin{remark} Later on, we will only use the existence of a bi-Lipschitz embedding $\Phi_-: \core(M_0) \to  M_\nu$ that is volume preserving away from a compact set.  We can therefore take $\epsilon = 1$  (this is an arbitrary choice) in the statement of  Proposition \ref{volume preserving limit}, and extend $\Phi_\nu$ to $\mathcal K$ by \emph{any} bi-Lipschitz homeomorphism onto some subset $\mathcal K'\subset M_\nu\setminus \im \Phi_\nu$ to obtain $\Phi_-$.  The overall bi-Lipschitz constant will be $B = \max\{2, \bilip(\Phi_-|_{\mathcal K})\}$.  

\end{remark}

\section{Ladders}\label{ladders}

We will want to understand the relationship between based geodesics loops in a doubly degenerate manifold $M$ with bounded geometry and with end invariants $\nu = (\lambda_{\scriptscriptstyle-}, \lambda_+)$ and the same homotopy class of based geodesic loop in a singly degenerate manifold $N$ with end invariants $(Y_0,\lambda_+)$.  Here, $Y_0$ is a point on the Teichm\"uller geodesic between $\lambda_{\scriptscriptstyle-}$ and $\lambda_+$, as in Section \ref{ELTbdd}.  The model manifolds have the form $\model N = (S\times \bRplus, d_{\model N})$ and $\model M = (S\times \bR, d_{\model M})$, where the infinitesimal formulation of the metrics $d_{\model M}$ and $d_{\model N}$ are both of the form $ds^2 = |\QQ_t| +dt^2$, as explained in Section \ref{ELTbdd} .  Then the inclusion $\widetilde{\model N} \hookrightarrow \widetilde{\model M}$ of universal covers is isometric with the path metric on the image, but  $\widetilde{\model N} \subset \widetilde{\model M}$ is very far from being even qausi-convex .  The model manifold $\model N $ has universal cover that is equivariantly quasi-isometric to $\widetilde{\core(N)}$.   

let $\Sigma\subset S_0$ be the set of zeros of $\QQ_0$. We pick a point $p \in \Sigma \subset \model N \subset \model M$, and choose $\tilde p$ a lift of $p$ under universal covering projections.  Let $\iota_t^*d$ be singular Euclidean metric on $\widetilde{S_t} = \widetilde{S\times\{t\}}$ defined by  $|\widetilde{\QQ_t}|$, so that the inclusions  $\iota_t:\widetilde {S_t}\to \widetilde{\model M}$ as level surfaces are isometries with respect to the path metric on the image.  We withhold the right to  abbreviate a point $(x,t)\in \widetilde{S_t}$ as $x_t$.  Note that the (identity) map $\widetilde{S_s}\to \widetilde{S_t}$, $x_s\mapsto x_t$ is a lift of the Teichm\"uller map between the conformal structures underlying $\QQ_s$ and $\QQ_t$, by construction (see Section \ref{ELTbdd}).  That is, $x_s\mapsto x_t$ is affine, $e^{|s-t|}$-bi-Lipschitz, and it takes $\iota_s^*d$-geodesics to $\iota_t^*d$-geodesics.  The inclusions $(\widetilde{S_t}, \iota_t^*d)\to (\widetilde{\model M}, d_{\model M})$ are uniformly proper for all $t$ in the following sense. 
\begin{lemma}\label{uniformly proper} For all $t\in \bR$ and  $D\ge 0$, if $d_{\model M}(x_t,y_t)= D$, then $\iota_t^*d(x_t, y_t)\le D e^{D/2}$.
\end{lemma}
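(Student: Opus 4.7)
The plan is to take a nearly $d_{\model M}$-minimizing path $\gamma\colon [0,D+\epsilon]\to \widetilde{\model M}$ from $x_t$ to $y_t$, parametrized by arclength, and project it onto the level surface $\widetilde{S_t}$.  Because $\iota_t$ is an isometric embedding with the path metric on the image, $\iota_t^*d(x_t,y_t)$ equals the infimum of $|\widetilde{\QQ_t}|$-lengths of paths in $\widetilde{S_t}$ joining $x_t$ and $y_t$, so bounding the $|\widetilde{\QQ_t}|$-length of the projection suffices.

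Write $\gamma(s)=(\bar\gamma(s),t(s))$ using the product structure $S\times\bR$.  Two estimates follow immediately from unit speed in the metric $ds^2=e^{2t}dx^2+e^{-2t}dy^2+dt^2$: first, $|\dot t(s)|\le 1$, which together with $t(0)=t(D+\epsilon)=t$ forces
$$|t(s)-t|\le \min(s,\,D+\epsilon-s)\le (D+\epsilon)/2;$$
second, $e^{2t(s)}\dot x(s)^2+e^{-2t(s)}\dot y(s)^2\le 1$.  The main computation compares the $|\widetilde{\QQ_t}|$-speed of $\bar\gamma$ to the horizontal speed of $\gamma$ at height $t(s)$; using $e^{\pm 2t}=e^{\pm 2(t-t(s))}e^{\pm 2t(s)}$ together with the second estimate above gives the pointwise inequality
$$e^{2t}\dot x^2+e^{-2t}\dot y^2\le e^{2|t-t(s)|}\bigl(e^{2t(s)}\dot x^2+e^{-2t(s)}\dot y^2\bigr)\le e^{2|t-t(s)|},$$
so that the $|\widetilde{\QQ_t}|$-speed of $\bar\gamma$ is at most $e^{|t-t(s)|}$.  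Integrating and applying the first estimate yields
$$\mathrm{length}_{|\widetilde{\QQ_t}|}(\bar\gamma)\le \int_0^{D+\epsilon} e^{|t-t(s)|}\,ds\le (D+\epsilon)\,e^{(D+\epsilon)/2},$$
and letting $\epsilon\to 0$ gives the claimed bound $\iota_t^*d(x_t,y_t)\le De^{D/2}$.

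I do not anticipate a genuine obstacle.  The only mild subtlety is that the singular Sol metric on $\widetilde{\model M}$ has a singular locus $\widetilde\Sigma\times\bR$ where length-minimizing geodesics may fail to exist; this is exactly why I work with rectifiable near-minimizers and take the $\epsilon\to 0$ limit at the end, rather than trying to use an actual $d_{\model M}$-geodesic between $x_t$ and $y_t$.
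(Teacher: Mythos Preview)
Your argument is correct and is essentially the infinitesimal version of the paper's proof: the paper observes that a minimizing path from $x_t$ to $y_t$ stays in the slab $\widetilde S\times[t-D/2,\,t+D/2]$ and that the vertical projection $p_t$ onto $\widetilde{S_t}$ is an $e^{D/2}$-Lipschitz retraction, whence the projected path has length at most $De^{D/2}$. Your pointwise estimate $e^{2t}\dot x^2+e^{-2t}\dot y^2\le e^{2|t-t(s)|}$ is exactly the derivative-level content of ``$p_t$ is $e^{D/2}$-Lipschitz'', and your use of near-minimizers rather than an actual geodesic is harmless extra care.
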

\begin{proof}Let $\gamma: [0,D]\to \widetilde{\model M}$ be a $d_{\model M}$-geodesic joining $x_t$ and $y_t$.  Then $\im\gamma\subset \widetilde{S} \times [t-D/2, t+D/2]$.  Endow $\widetilde{S} \times [t-D/2, t+D/2]\subset\widetilde{\model M}$ with its path metric; the projection  $p_t:  \widetilde{S} \times [t-D/2, t+D/2] \to S_t$, mapping $x_s\mapsto x_t$ is then a $e^{D/2}$-Lipschitz retraction that commutes with the bi-Lipschitz Teichm\"uller map; they are both identity on the first coordinate.  So the length of $p_t(\gamma)\subset \widetilde{S_t}$ is at most $e^{D/2}D$, which bounds the distance $\iota_t^*d(x_t,y_t)$.  
\end{proof}

Suppose $\gamma: I \to S_0$ is a closed $d_0$-geodesic based at $p$.  Find the lift $\widetilde \gamma : I \to \widetilde S_0$ based at $\tilde p$.  We will conflate the map $\widetilde \gamma$ with its image $\widetilde \gamma \subset \widetilde {\model N} \subset \widetilde{\model M}$. While $\model M$ and $\model N$ do not have negative sectional curvatures, they are $\delta$-hyperbolic, where $\delta$ depends on both $S$ and $\epsilon$ as in Theorem \ref{model map}; the Teichm\"uller geodesic $Y_t\subset \teich^{\ge \epsilon}(S)$.  Straighten $\widetilde{\gamma}$ with respect to the two metrics $\gamma^*\subset \widetilde{\model M}$ and $\gamma_+^*\subset \widetilde{\model N}$ rel endpoints.  We would like to make precise the notion that $\gamma^*$ and $\gamma_+^*$ fellow travel in $\widetilde{\model N}$, and while $\gamma^*$ spends time in $\widetilde{\model M}\setminus\widetilde{\model N}$, $\gamma_+^*$ spends time near $\widetilde{\partial \model N}$.   The main result from this section is

\begin{theorem}\label{almost bounded distance}
There exists a constant $D_1>0$ depending only on $\epsilon$ and $S$ such that the following holds.
Let $\gamma: I \to \model N$ be a loop based at $p$. With notation as above, let $\Pi: \widetilde {\model M} \to \im \gamma^*$ be a nearest point projection.  

If $d(\gamma_+^*(t), \Pi(\gamma_+^*(t)))>D_1$, then $\Pi(\gamma_+^*(t))\in \mathcal N _{D_1}(\widetilde{\model M}\setminus \widetilde{\model{N}})$.  In other words, $\gamma_+^*\subset \cN_{D_1}(\gamma^* \cup \partial \widetilde{\model N})$ and $\gamma^*\cap \widetilde{\model N} \subset \cN_{D_1}(\gamma_+^*)$. 
\end{theorem}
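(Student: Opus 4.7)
The plan is to prove the theorem via a hyperbolic ladder construction in the spirit of Mahan Mj, adapted to the singular Sol bundles $\widetilde{\model M}\to \bR$ and $\widetilde{\model N}\to [0,\infty)$. The guiding principle is that any $\widetilde{\model M}$-geodesic admits a uniformly quasi-convex ``horizontal completion'' obtained by filling in fiberwise geodesics at each height; applying this to both $\gamma^*$ and $\gamma_+^*$ and comparing the two resulting ladders yields the fellow-traveling dichotomy asserted by the theorem.

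First I would construct two ladders. For each $t$ in the projection of $\gamma^*$ onto $\bR$, let $\Lambda^*_t\subset \widetilde S_t$ be the $\iota_t^*d$-geodesic hull of $\gamma^*\cap \widetilde S_t$, and set $\Lambda^* = \bigcup_t\Lambda^*_t$. Analogously, for each $t\ge 0$ in the projection of $\gamma_+^*$, let $\Lambda^+_t\subset \widetilde S_t$ be the $\iota_t^*d$-geodesic hull of $\gamma_+^*\cap \widetilde S_t$, and set $\Lambda_+ = \bigcup_{t\ge 0}\Lambda^+_t\subset \widetilde{\model N}$. By Lemma \ref{uniformly proper} and the uniformly bi-Lipschitz Teichm\"uller identifications $x_s\mapsto x_t$, each ladder has $\widetilde{\model M}$-Hausdorff distance to its defining geodesic bounded by a constant depending only on $\epsilon$ and $S$. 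Directly from the construction $\Lambda^*\cap \widetilde{\model N}\subset \cN_{O(1)}(\gamma^*\cup \partial \widetilde{\model N})$, since the portion of $\Lambda^*$ at levels $t<0$ is excised and the $t=0$ ``cap'' lies in $\partial \widetilde{\model N}$.

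The central technical step is to prove uniform quasi-convexity: $\Lambda^*$ is $k$-quasi-convex in $\widetilde{\model M}$ and $\Lambda^*\cap \widetilde{\model N}$ is $k$-quasi-convex in $\widetilde{\model N}$, for some $k=k(\epsilon,S)$. Granting this, the theorem follows. First, $\gamma_+^*$ is the $\widetilde{\model N}$-geodesic between two points of $\Lambda^*\cap \widetilde{\model N}$, so quasi-convexity yields
\[\gamma_+^*\subset \cN_k(\Lambda^*\cap \widetilde{\model N})\subset \cN_{k+O(1)}(\gamma^*\cup \partial \widetilde{\model N}),\]
the first inclusion of the ``in other words'' statement. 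For the second inclusion, a fiberwise comparison of $\Lambda^*$ and $\Lambda_+$---using the coarse matching of $\gamma^*$- and $\gamma_+^*$-crossings in each $\widetilde S_t$ forced by the agreement of endpoints and Lemma \ref{uniformly proper}---gives $\Lambda^*\cap \widetilde{\model N}\subset \cN_{O(1)}(\Lambda_+)$, so that $\gamma^*\cap \widetilde{\model N}\subset \Lambda^*\cap \widetilde{\model N}\subset \cN_{D_1}(\gamma_+^*)$. The $\Pi$-formulation of the theorem then follows from the two inclusions together with a standard thin-triangle and Gromov-product argument in $\widetilde{\model M}$, after enlarging $D_1$ by a constant factor.

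The main obstacle is the uniform quasi-convexity of $\Lambda^*$ in $\widetilde{\model M}$ with constants depending only on $\epsilon$ and $S$. Because the fibers $(\widetilde S_t,\iota_t^*d)$ are singular Euclidean rather than $\delta$-hyperbolic, Mj's fiberwise-hyperbolicity hypothesis fails in our setting; instead, the proof must exploit the global $\delta$-hyperbolicity of $\widetilde{\model M}$ from Theorem \ref{model map} together with the Teichm\"uller-flow contraction in the vertical direction, using Lemma \ref{uniformly proper} to bound fiberwise excursions by ambient $\widetilde{\model M}$-displacements and thereby close the ladder estimate.
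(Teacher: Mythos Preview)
Your proposal has the right instinct---ladders---but the construction you describe is not the one that works, and the gaps you flag as ``obstacles'' are in fact where the entire content of the proof lives.

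First, your ladders $\Lambda^*$ and $\Lambda_+$ are built from the \emph{ambient geodesics} $\gamma^*$, $\gamma_+^*$ by taking fiberwise hulls of their level-crossings. The paper instead builds $\ladder(\gamma)=\widetilde\gamma\times\bR$ from the \emph{fiber geodesic} $\widetilde\gamma\subset\widetilde{S_0}$, then proves this is uniformly quasi-convex via an explicit coarse-Lipschitz retraction $\Pi^\gamma$ assembled from the fiberwise nearest-point projections. Your assertion that $\Lambda^*$ lies at bounded Hausdorff distance from $\gamma^*$ is unjustified: $\gamma^*$ can cross a given height $t$ at points arbitrarily far apart along $\gamma^*$, and Lemma~\ref{uniformly proper} only controls fiber distance in terms of ambient distance, not the other way around. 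So the fiberwise hull $\Lambda^*_t$ can be far from $\gamma^*$.

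Second, and more seriously, you correctly identify quasi-convexity of $\Lambda^*\cap\widetilde{\model N}$ in $\widetilde{\model N}$ as the crux, but you do not prove it; gesturing at ``global $\delta$-hyperbolicity together with Teichm\"uller-flow contraction'' is not an argument. The paper's mechanism is entirely different and much more concrete: it decomposes $\widetilde\gamma$ into saddle connections $\alpha_1\cdot\ldots\cdot\alpha_n$, observes that each subladder $\ladder(\alpha_i)$ is $\sqrt2$-bi-Lipschitz to a strip in $\bH^2$, and introduces explicit VH-paths (alternating vertical and horizontal segments governed by the \emph{balance times} $b(\alpha_i)$, $b_\pm(\alpha_i)$) which serve as uniform quasi-geodesics in both $\ladder$ and $\ladder_+$. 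The dichotomy in the theorem then comes from a partition of $\{1,\ldots,n\}$ into intervals according to whether $b_+(\alpha_i)\ge 0$ or $<0$: on intervals of the first type the VH-geodesic and VH$_+$-geodesic coincide, while on intervals of the second type the VH$_+$-geodesic is forced onto $\widetilde{S_0}=\partial\widetilde{\model N}$ and the VH-geodesic dives into $\widetilde{\model M}\setminus\widetilde{\model N}$. This balance-time bookkeeping is the actual content, and it is absent from your sketch.
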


We prove Theorem \ref{almost bounded distance} at the end of the section after first describing a family of  quasi-geodesics in $\widetilde{\model N}\subset \widetilde{\model M}$. With $\gamma$ and $p$ as above, let $\ladder(\gamma) = \widetilde\gamma \times \bR$ and $\ladder_+(\gamma) = \widetilde\gamma \times \bRplus$.  The spaces $\ladder(\gamma)\subset \widetilde{\model M}$ and $\ladder_+(\gamma)\subset \widetilde{\model N}$ can be thought of as the union of the lifts of based $\iota_t^*d$-geodesic segments as $t$ ranges over $\bR$ or $\bRplus$.  Each of the spaces $\ladder(\gamma)\subset \widetilde{\model M}$ and  $\ladder_+(\gamma)\subset \widetilde{\model N}$ inherits a path metric that we call $d_\ladder$.  We call the spaces $(\ladder(\gamma), d_\ladder)$ and $(\ladder_+(\gamma), d_\ladder)$ \emph{ladders} after \cite{Mahan:CT-bdd}, \cite{Mahan:CT-trees}.  The point of the ladder construction is that we will be able to reduce the problem of understanding based geodesics in $\model M$ or $\model N$ to understanding the way based geodesics behave in $2$-dimensional quasi-convex subsets.
 
 Ladders also inherit metrics from the ambient spaces $\widetilde{\model M}$ and $\widetilde{\model N}$.  We now show that the inclusion of a ladder with its path metric is undistorted with respect to the ambient model metric, and so ladders are quasi-convex.  The strategy of the proof is standard and can be found in \cite{Mahan:CT-bdd}.  We include details, because we prefer to continue to work with the infinitesimal formulation of the metric as opposed to discretizing our space.  
\begin{prop}\label{quasi-convex}
There are constants $L_0$ and $L_1$ depending on $S$ and $\epsilon$, such that the inclusion $(\ladder(\gamma), d_\ladder)\hookrightarrow (\widetilde{\model M}, d_{\widetilde{\model M}})$ is an $(L_0,L_0)$-quasi-isometric embedding.  Consequently, $\ladder(\gamma)\subset \widetilde{\model M}$ is $L_1$-quasi-convex.  Analogous statements hold for $\ladder_+(\gamma)\subset \widetilde{\model N}$.  
\end{prop}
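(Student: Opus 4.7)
The strategy, following \cite{Mahan:CT-bdd}, is to build a coarsely Lipschitz retraction $\pi:\widetilde{\model M}\to\ladder(\gamma)$ and derive both conclusions from it. Since the inclusion is tautologically $1$-Lipschitz, whenever $p,q\in\ladder(\gamma)$ we will have $d_\ladder(p,q)=d_\ladder(\pi(p),\pi(q))\leq L_0\,d_{\model M}(p,q)+L_0$, yielding the quasi-isometric embedding. For quasi-convexity in the $\delta$-hyperbolic space $\widetilde{\model M}$ one then projects any ambient geodesic joining two ladder points to an $(L_0,L_0)$-quasi-geodesic in $\ladder(\gamma)$ with the same endpoints and invokes the Morse Lemma (Theorem \ref{Morse}); the constant $L_1$ depends only on $L_0$ and $\delta$.

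The retraction $\pi$ is constructed level by level. For each $t\in\bR$, the singular flat surface $(\widetilde{S_t},\iota_t^{*}d)$ is the universal cover of a closed surface whose cone angles at the zeros of $\QQ_t$ are all at least $2\pi$, so it is a complete CAT$(0)$ space. The identity map $\widetilde{S_s}\to\widetilde{S_t}$ is an affine Teichm\"uller map that sends $\iota_s^{*}d$-geodesics to $\iota_t^{*}d$-geodesics, so $\widetilde{\gamma}_t$ is a $\iota_t^{*}d$-geodesic for every $t$. Let $\pi_t:\widetilde{S_t}\to\widetilde{\gamma}_t$ be the nearest-point projection, which is $1$-Lipschitz in $\iota_t^{*}d$ by convexity of the distance function in CAT$(0)$. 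Set $\pi(x,t):=(\pi_t(x),t)$; by construction this is a retraction onto $\ladder(\gamma)$.

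The heart of the argument is the coarse Lipschitz estimate on $\pi$, with constants depending only on $\epsilon$ and $S$. Working infinitesimally, as preferred by the author, I consider a smooth path $\alpha(s)=(x(s),t(s))$ in $\widetilde{\model M}$ and decompose $\alpha'(s)$ into a horizontal part tangent to $\widetilde{S_{t(s)}}$ and a vertical part along $\partial_t$. The horizontal part projects with $1$-Lipschitz factor via $d\pi_{t(s)}$, contributing to $|\pi\circ\alpha|_\ladder$ at most the horizontal length of $\alpha$. The vertical part produces a unit-speed vertical translation inside $\ladder(\gamma)$ together with a drift of the projected point $\pi_t(x)$ along $\widetilde{\gamma}$ as the metric on the level surface varies. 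The essential estimate is that this drift is uniformly controlled in $|t'(s)|$: the affine Teichm\"uller identification conjugates $\pi_s$ to a nearby map on $\widetilde{S_t}$, and by Lemma \ref{uniformly proper}, points $x$ that are far from $\widetilde{\gamma}_t$ in $\iota_t^{*}d$ are also far in $d_{\model M}$, so any ambient path passing through such points must be correspondingly long. Integrating yields $|\pi\circ\alpha|_\ladder\leq L_0\,|\alpha|_{\model M}+L_0$.

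I expect the main obstacle to be controlling the drift $\partial_t\pi_t(x)$ within the infinitesimal framework, particularly near the singular locus $\Sigma\times\bR$ where $\pi_t$ may fail to be differentiable and for $x$ at intermediate $\iota_t^{*}d$-distance from $\widetilde{\gamma}_t$. My plan is to bypass pointwise differentiability by integrating the CAT$(0)$ convexity of the distance function to $\widetilde{\gamma}_t$ along short sub-paths, using the affine Teichm\"uller scaling to compare projections on nearby level surfaces. Uniformity of the resulting constants in the parameters of $\gamma$ follows from the bounded geometry hypothesis, which keeps the Teichm\"uller geodesic $t\mapsto Y_t$ in the $\epsilon$-thick part of moduli space and hence bounds the local geometry of $|\QQ_t|$ uniformly. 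The argument for $\ladder_+(\gamma)\subset\widetilde{\model N}$ is verbatim identical, since $\widetilde{\model N}$ coincides with the half-space $\{t\geq 0\}$ inside $\widetilde{\model M}$ equipped with its induced path metric.
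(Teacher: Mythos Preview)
Your overall strategy matches the paper's exactly: define the level-wise nearest-point retraction $\pi(x,t)=(\pi_t(x),t)$, show it is coarsely Lipschitz, and deduce the quasi-isometric embedding and quasi-convexity via the Morse Lemma. The definition of $\pi$ and the reduction to a coarse Lipschitz estimate are identical to the paper.

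Where you diverge is in the execution of that estimate. You attempt an infinitesimal argument, differentiating along a path and controlling the ``drift'' $\partial_t\pi_t(x)$, and you correctly flag this as the main obstacle. This is a genuine gap: the drift is \emph{not} uniformly bounded pointwise. A point $x$ at large $\iota_t^*d$-distance from $\widetilde\gamma_t$ can have its projection $\pi_t(x)$ jump by an amount comparable to that distance under a unit change in $t$, so integrating an infinitesimal bound along an arbitrary path does not yield a uniform coarse Lipschitz constant. Lemma~\ref{uniformly proper} does not rescue this, since it only relates $d_{\model M}$-distances to $\iota_t^*d$-distances between pairs of points, not projections.

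The paper sidesteps this entirely by working coarsely from the outset: it suffices to show that $d_{\model M}(x_s,y_t)\le 1$ implies $d_\ladder(\pi(x_s),\pi(y_t))\le L_0$, and then iterate the triangle inequality. The key ingredient you are missing is that the level surfaces $(\widetilde S,\iota_t^*d)$ are not merely CAT(0) but uniformly $\delta$-hyperbolic (by bounded geometry), so one may invoke the standard fact that quasi-isometries almost commute with nearest-point projections in Gromov hyperbolic spaces (e.g.\ \cite[Lemma 2.5]{Mahan:CT-bdd}). Since $|s-t|\le 1$, the Teichm\"uller identification $\widetilde S_s\to\widetilde S_t$ is an $(e,0)$-quasi-isometry, whence $\iota_t^*d(\pi_s(x)_t,\pi_t(x_t))\le C(\delta,e)$; combined with the $1$-Lipschitz bound on $\pi_t$ and Lemma~\ref{uniformly proper} this gives the required estimate. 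I recommend abandoning the infinitesimal approach and adopting this coarse one.
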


\begin{proof}
We will construct an $(L_0, L_0)$-coarse-Lipschitz retraction $\Pi^{\gamma} : \widetilde{\model M} \to \ladder(\gamma)$.  Once we have done so, we see that for each $x,y\in \ladder$, we have 
\[d_\ladder(x,y) = d_\ladder(\Pi^\gamma(x), \Pi^\gamma(y)) \le L_0 d_{\widetilde{\model M}}(x,y) +L_0.\] 
By definition of the path metric, for all $x,y\in \ladder$, we have $d_{\widetilde{\model M}}(x,y)\le d_\ladder(x,y)$.  Combining inequalities, we see that \[ d_{\widetilde{\model M}}(x,y)\le d_\ladder(x,y)\le L_0 d_{\widetilde{\model M}}(x,y) +L_0,\] which is to say that $\ladder\hookrightarrow \widetilde{\model M}$ is an $(L_0, L_0)$-quasi-isometric embedding.  Since $(\widetilde{\model M},d_{\widetilde{\model M}})$ is a $\delta$-hyperbolic metric space, $\ladder$ is $\delta' = \delta'(\delta, L_0)$ hyperboic, and by the Morse Lemma (Theorem \ref{Morse}), $\ladder$ is $L_1 = L_1(\delta, L_0)$-quasi-convex. 

Now we construct $\Pi^\gamma$.
Let $\pi^\gamma_t: \widetilde{S} \to \gamma$ be the nearest point projections with respect to the metric $\iota_t^*d$; the spaces $(\widetilde{S}, \iota_t^*d)$ are CAT(0) and $\gamma_t$ is $\iota_t^*d$-geodesically convex, so the projections $\pi^\gamma_t$ are $1$-Lipschitz retractions \cite{BH}.
The global retraction 
\begin{align*}
\Pi^{\gamma} : \widetilde{\model M} & \to \ladder(\gamma) \\ (x,t) & \mapsto (\pi^\gamma_t(x),t)
\end{align*}
leaves $dt^2$ invariant.  
We now need to understand how much the projections $\pi^\gamma_t$ change as $t$ changes, because unlike in Lemma \ref{uniformly proper}, the projections $\pi_t^\gamma$ do not commute with $x_s = (x,s) \mapsto (x,t) = x_t$. 

Assume $d_{\widetilde{\model M}}(x_s, y_t)\le 1$.  We will find $L_0\ge 1$ such that $d_\ladder(\Pi^\gamma(x_s), \Pi^\gamma(y_t))\le L_0$, and the lemma will follow from this by a  repeated application of the triangle inequality.  In what follows we abbreviate $\pi^\gamma_t$ to $\pi_t$.
\begin{align}
\label{line 1} d_{\ladder}(\Pi^\gamma(x_s), \Pi^\gamma(y_t)) &= d_{\ladder}((\pi_s(x),s), (\pi_t(y),t))\\  \label{line 2}
& \le d_{\ladder}((\pi_s(x),s), (\pi_s(x),t))+d_{\ladder}((\pi_s(x),t), (\pi_t(x),t))+d_{\ladder}((\pi_t(x),t), (\pi_t(y),t)) \\ \label{line 3}
&\le |s-t| + \iota_t^*d((\pi_s(x),t), (\pi_t(x),t)) +\iota_t^*d((\pi_t(x),t), (\pi_t(y),t))\\ \label{line 4}
& \le 1 + C + \iota_t^*d(x_t, y_t) 
\end{align}
Equality in (\ref{line 1}) is the definition of $\Pi^\gamma$, and (\ref{line 2}) is the triangle inequality.  In (\ref{line 3}), we use the fact that segments of the form $(x, I)$ for any connected interval $I\subset \bR$ are convex and have length $|I|$, and the fact that $d_\ladder \le \iota_t^*d$ restricted to $S_t\cap \ladder$.   The Teichm\"uller map $S_s\to S_t$ is $e^{|s-t|}$-bi-Lipschitz, hence in particular an $(e,0)$-quasi-isometry, because $|s-t| \le d_{\model M} (x_s, y_t)\le 1$.   Since ``quasi-isometries almost commute with nearest point projections'' in a $\delta$-hyperbolic metric space, e.g. \cite[Lemma 2.5]{Mahan:CT-bdd} or \cite[Proposition 11.107]{Drutu-Kapovich:book},  $\iota_t^*d((\pi_s(x),t), (\pi_t(x),t))\le C = C(\delta, e)$, which is the constant appearing in (\ref{line 4}).  We have also used that $\pi_t: S \to \gamma$ is $1$-Lipschitz in (\ref{line 4}). 

 We assumed that $d_{\widetilde{\model M}}(x_s, y_t)\le 1$ so \[d_{\widetilde{\model M}}(y_t, x_t)\le d_{\widetilde{\model M}}(y_t, x_s) + d_{\widetilde{\model M}}(x_s, x_t) \le 1+|s-t| \le 2.\]  Using Lemma \ref{uniformly proper}, we obtain $\iota_t^*d(x_t, y_t) \le 2e^{2/2} = 2e$.   
Taking $L_0 = 1+C+2e$ completes the proof that $\Pi^\gamma$ is an $(L_0, L_0)$-coarse-Lipschitz retraction.  The proof of the corresponding statements for  $\ladder_+(\gamma)\subset \model N$ are analogous.   
\end{proof}

Let $\alpha: [0, a] \to \widetilde{S_0}$ be a \emph{saddle connection}, \ie $\alpha(0), \alpha(a)\in \Sigma$, $\alpha(t)\not\in\Sigma$ for all $t\in (0, a)$, and $\alpha$ is a $d_0$-geodesic segment parameterized by arclength (so $a$ is the $d_0$ length of $\alpha$). Then $\alpha$ is a Euclidean segment, and its \emph{slope} $s(\alpha)$ is the ratio of the transverse measures of $\alpha$ in the vertical and horizontal directions when those are non-zero; in other words, $\frac{\int_\alpha |dy|}{\int_\alpha |dx|} = |s(\alpha)|$. If $\int_\alpha |dy| = 0$, then $\alpha$ is  an \emph{expanding saddle connection}.  If ${\int_\alpha |dx|} = 0$, then $\alpha$ is a \emph{contracting saddle connection}.  When $\alpha$ is neither an expanding nor a contracting saddle connection, the pullback of the model metric $ds^2$ by inclusion of $\ladder(\alpha)\hookrightarrow \widetilde{\model M}$ has the form \[e^{2t}dx^2 +e^{-2t}s(\alpha)^2dx^2 +dt^2,\] and the value of $t$ where $t\mapsto e^{2t} +e^{-2t}s(\alpha)^2$ is minimized is given by $b(\alpha): = \frac12\log(|s(\alpha)|)$.  Call $b(\alpha)$ the \emph{balance time} of $\alpha$; thus we see that the  the function $t \mapsto \ell_t(\alpha)$ is convex, and in fact takes the form $t\mapsto \ell_{b(\alpha)}(\alpha)\sqrt{\cosh2(t-b(\alpha))}$; $\ell_{b(\alpha)}(\alpha)$ is the unique minimum value.

For any non-expanding and non-contracting saddle connection $\alpha$ whose length is at most $1$ at its balance time, define two numbers $b_{\pm}(\alpha)\in \bR$ by the rule $\ell_{b_{\pm}(\alpha)}(\alpha) = 1$; the two values of $b_{\pm}$ are given explicitly by the function in the previous paragraph, and are defined such that $b_-\le b \le b_+$.  If $\alpha$ is expanding, the length of $\alpha$ in level surfaces is infimized at $b(\alpha) : = -\infty$, while if $\alpha$ contracting, it's length in level surfaces is infimized at $b(\alpha) :=+\infty$. Note however that if $\alpha$ is expanding, then $\ell_{\log a }(\alpha) =1$; set $b_+(\alpha) = \log a$ and $b_-(\alpha) = b(\alpha) = -\infty$.  If $\alpha$ is contracting, then $\ell_{-\log a}(\alpha) =1$; set $b_-(\alpha) = -\log a$ and $b_+(\alpha) = b(\alpha) = +\infty$.   If $\alpha$ is not short at its balance time, set $b_\pm(\alpha) = b(\alpha)$.  See Figure \ref{VH-crossings} for a schematic of the definitions of $b_\pm(\alpha)$ and $b(\alpha)$.  

Say that $\beta : [0, a] \to \ladder(\gamma)$ is a \emph{horizontal segment} if $\im \beta \subset S_t$ for some $t\in \bR$.  Say that $\beta: [0,a]\to \ladder(\gamma)$ is a \emph{vertical segment} if $\im \beta \subset \{x\} \times \bR$ for some $x\in \widetilde{S_0}$.  Say that $\gamma$ is a \emph{VH-path} if it decomposes as a concatenation of subpaths $\gamma = \beta_1\cdot \alpha_1\cdot  ... \cdot \alpha_n\cdot \beta_{n+1}$ that alternate between vertical and horizontal segments, beginning and ending in vertical segments (that may have length $0$).  
\begin{notation} If $\alpha\subset \widetilde{S_0}$ is a saddle connection and $h\in \bR$, then by abuse of notation $\alpha_h$ denotes both the geodesic map $[0, \ell_h(\alpha)]\to \widetilde{S_h}$, $t\mapsto \alpha(t)\times \{h\}$ or $t\mapsto \alpha(\ell_h(\alpha) -t)\times \{h\}$ and the image of that map $\alpha\times \{h\}$.  If $I$ is an index set, $\alpha_i\subset \widetilde{S_0}$ will refer to the $i$th saddle connection in a set $\{\alpha_i\}_{i\in I}$.
\end{notation}

\begin{definition}\label{VH-geodesics}
Let $\alpha\subset \widetilde{S_0}$ be a saddle connection, and let $x_s, y_t$ be points on the two different components of $\partial \ladder(\alpha)$.  Say that a VH-path $\gamma = \beta_r\cdot \alpha_h\cdot \beta_l$ joining $x_s$ and $y_t$ is a \emph{VH-crossing}, if $\beta_r$and $\beta_l$ are geodesics parameterized by arclength, $\alpha_h$ is a $d_h$-geodesic segment, and
\begin{enumerate}[(i)]
\item If $\ell_{b(\alpha)}(\alpha)\ge 1$, or equivalently, $b(\alpha) = b_\pm(\alpha)$, then $h = b(\alpha)$.  
\item If $b_-(\alpha)<b(\alpha)<b_+(\alpha)$, then 
\begin{enumerate}
\item if $s\ge b_+(\alpha)$, then $h = b_+(\alpha)$, and if $s\le b_-(\alpha)$, then $h= b_-(\alpha)$. \label{3a}
\item if $b_-(\alpha)\le s\le b_+(\alpha)$, then $h = s$. \label{3b}
\end{enumerate}
\end{enumerate}
\end{definition}
We aim to prove that VH-crossings are efficient, which is partly the content of Lemma \ref{Hdist}, Lemma \ref{Hdistplus1}, and Lemma \ref{Hdistplus2}, below.  We now explain how to think about ladders as convex subsets of the hyperbolic plane bound by bi-infinite geodesics; VH-crossings mimic the behavior of geodesics in hyperbolic geometry that cross this convex set.
\begin{figure}[h]
\def\svgwidth{\columnwidth}
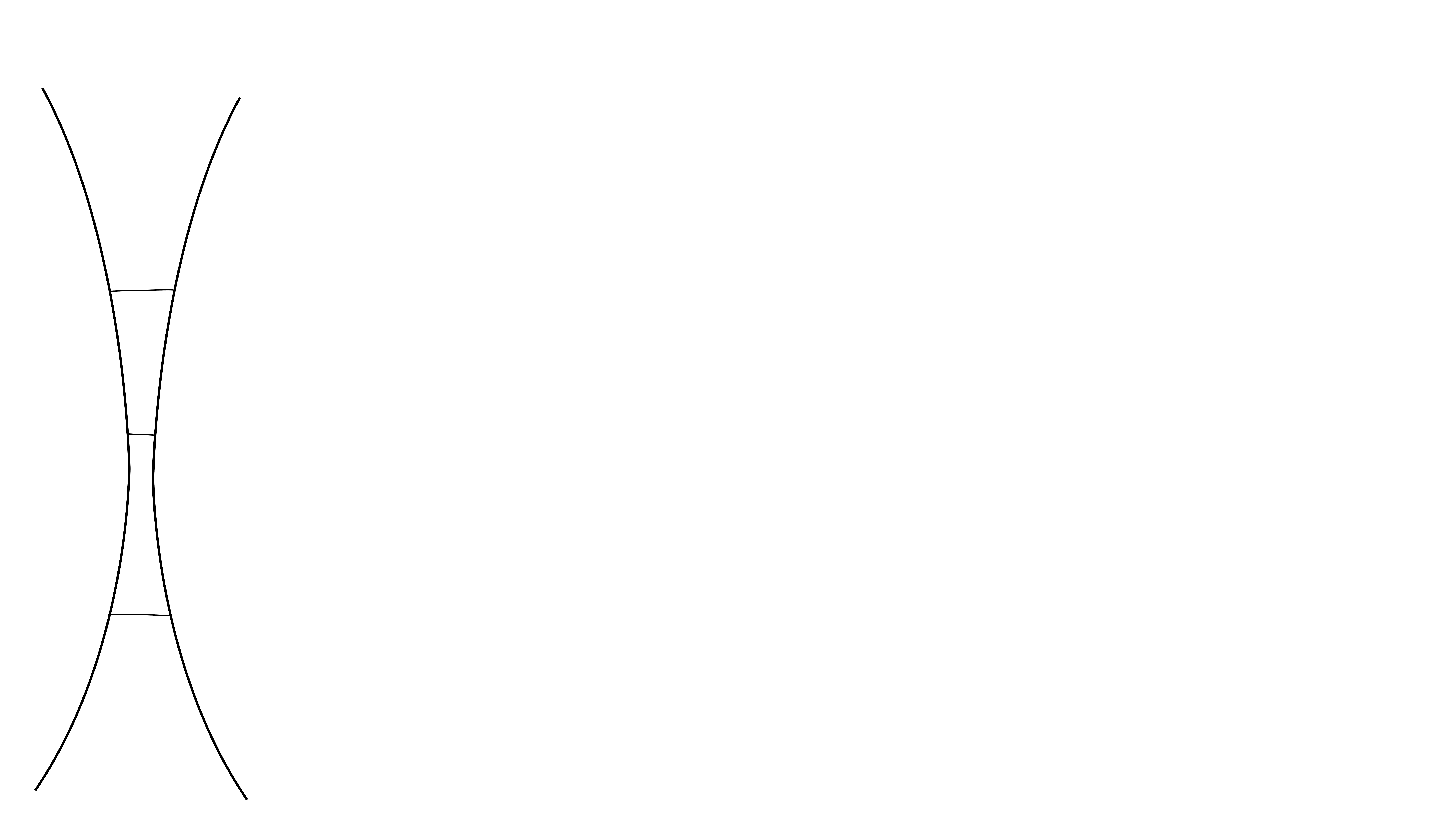
\caption{On the left, $\ell_{b(\alpha)}(\alpha)<1$.  The VH-crossings between some points, $a$, ..., $f\in \partial \ladder(\alpha)$ are depicted in red.  On the right, $
\beta$ is a contracting saddle connection, and $i_\beta$ maps $\ladder(\beta)$ to $\mathbb S(0)\subset \bU$.  }
\label{VH-crossings}
\end{figure}
\begin{figure}[h]
\def\svgwidth{.8\columnwidth}
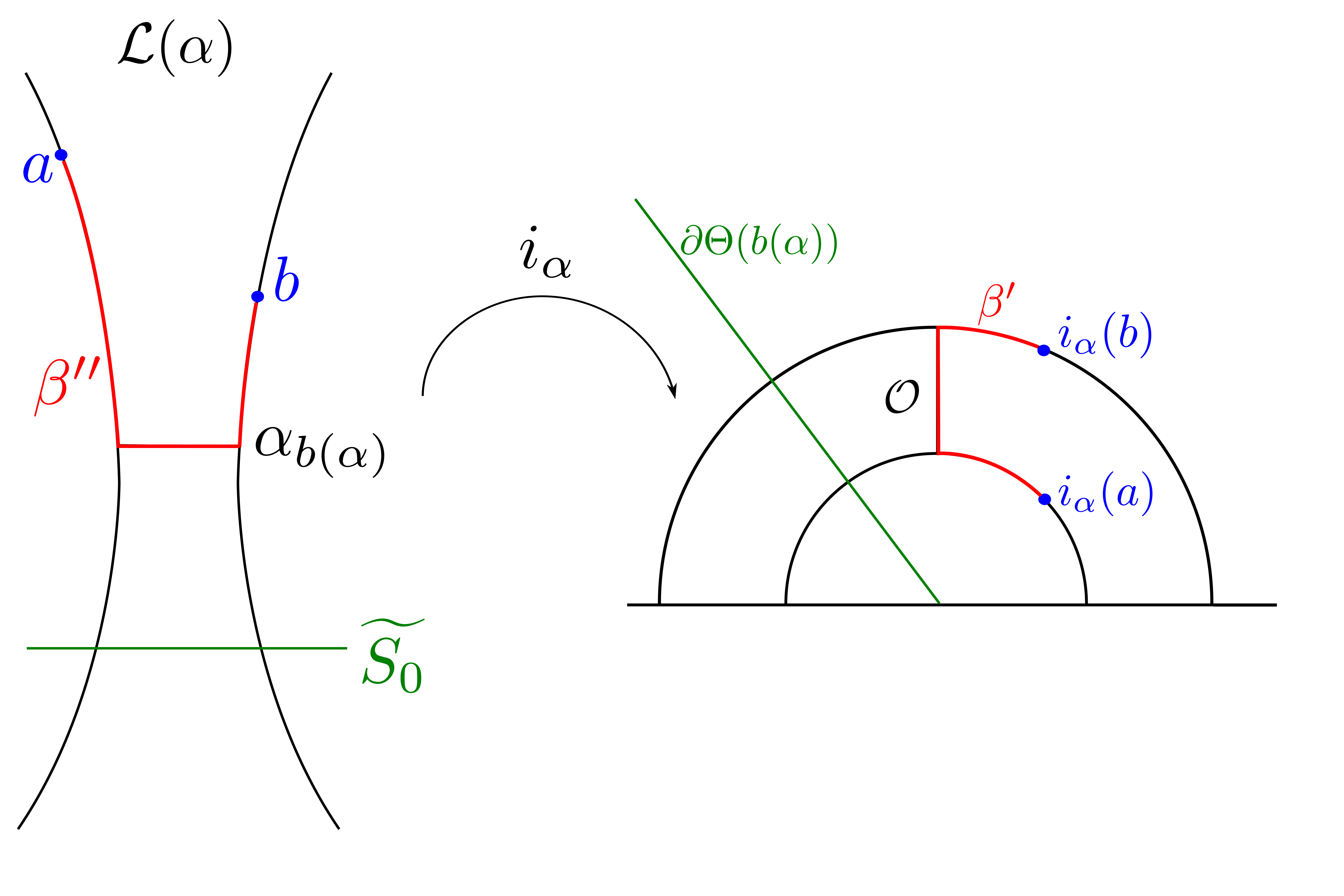
\caption{The ladder $\ladder(\alpha)$ is $\sqrt 2$-bi-Lipschitz equivalent to a strip in the hyperbolic plane. }
\label{U}
\end{figure}
Let $\bU$ be the upper half plane and $\rho_\bU$ its Poincar\'e metric.  For $\ell>0$, take $\bS(\ell) = \{1\le |z| \le e^\ell\}\cap \bU$ the \emph{strip of width $\ell$}; by direct computation, the map \begin{align*}
i_\alpha: (\ladder (\alpha), d_\ladder) &\to (\bS(\ell_{b(\alpha)}(\alpha)), \rho_\bU)\\
 (x,t) & \mapsto e^x\tanh(t-b(\alpha))+ie^x\sech(t-b(\alpha))
 \end{align*}
is $\sqrt2$-bi-Lipschitz and takes $\alpha \times \{b(\alpha) - t\}$ to the boundary of the $t$-neighborhood of the unique common perpendicular $\mathcal O$ to $\partial \bS(\ell_{b(\alpha)}(\alpha))$.   For $\ell = 0$, $\bS(0) = \{z\in \bU: 0\le \Re(z)\le 1\}$.  Let $a \in \{|z| = 1\}\cap \bS(\ell)$ and $b \in \{|z| = e^\ell\}\cap \bS(\ell)$, and 
\begin{equation}\label{beta'}
\beta' = [a, i] \cdot \mathcal O \cdot [ie^\ell, b] \subset \bU,
\end{equation} 
where $[x,y]$ denotes the $\rho_{\bU}$-geodesic segment joining $x$ to $y$.  It is a standard fact in hyperbolic geometry that  $\beta'$ is an $(K(\ell), 0)$-quasi-geodesic segment, where $K$ can be chosen to depend continuously on $\ell$.  By Theorem \ref{Morse},  \begin{equation}\label{nbhd}
d_{\rho_\bU}^H(\beta', [a,b]) \le n(\ell),
\end{equation}
 where $n(\ell)$ is some decreasing continuous function of $\ell$ that goes to infinity has $\ell$ goes to $0$.  The following is almost immediate from our description of $i_\alpha$ and from the observation in (\ref{nbhd}).

\begin{lemma}\label{Hdist}
Let $\alpha\subset \widetilde{S_0}$ be a saddle connection.  Let $a, b$ be two points on different components of $\partial \ladder(\alpha)$, and $\beta$ be the $d_\ladder$-geodesic segment joining them.  Let $\beta'' = \beta_r\cdot \alpha_h \cdot \beta_l$ be the VH-crossing joining $a$ and $b$.  
There is a universal constant $N$ such that $d_\ladder^H(\beta, \beta'') \le N $. 
\end{lemma}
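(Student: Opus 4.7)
The plan is to transfer the problem to hyperbolic geometry via the $\sqrt{2}$-bi-Lipschitz map $i_\alpha$ and apply the Morse Lemma (Theorem \ref{Morse}) inside $(\bU, \rho_\bU)$. Since $\beta$ is a $d_\ladder$-geodesic and $i_\alpha$ is bi-Lipschitz, the image $i_\alpha(\beta)$ is a $(\sqrt{2},0)$-quasi-geodesic in $\bU$ joining $i_\alpha(a)$ and $i_\alpha(b)$, so Morse furnishes a universal constant $D_0$ with $d^H_{\rho_\bU}(i_\alpha(\beta), [i_\alpha(a), i_\alpha(b)]) \le D_0$. It therefore suffices to produce a universal upper bound on $d^H_{\rho_\bU}(i_\alpha(\beta''), [i_\alpha(a), i_\alpha(b)])$, pull the result back through $i_\alpha^{-1}$ (again $\sqrt{2}$-Lipschitz), and absorb the $\sqrt{2}$ factor into $N$.

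The bound on $i_\alpha(\beta'')$ would come from a case analysis of Definition \ref{VH-geodesics}. In Case (i), when $\ell_{b(\alpha)}(\alpha) \ge 1$ and $h = b(\alpha)$, the image $i_\alpha(\beta'')$ is exactly of the form $\beta'$ in (\ref{beta'}): two vertical boundary rays meeting the common perpendicular $\mathcal O$. Estimate (\ref{nbhd}) then gives $d^H_{\rho_\bU}(i_\alpha(\beta''), [i_\alpha(a), i_\alpha(b)]) \le n(\ell_{b(\alpha)}(\alpha)) \le n(1)$, using that $n$ is decreasing. In Case (ii), when $\ell_{b(\alpha)}(\alpha) < 1$, the chosen height $h$ always satisfies $\ell_h(\alpha) \le 1$: in subcase (3a) we have $\ell_{b_\pm(\alpha)}(\alpha) = 1$ by definition, while in subcase (3b) the inequality $\ell_s(\alpha) \le 1$ for $s \in [b_-(\alpha), b_+(\alpha)]$ follows from the convexity of $t \mapsto \ell_t(\alpha)$ (which is minimized at $b(\alpha)$ with value $\ell_{b(\alpha)}(\alpha)$ and equals $1$ at the endpoints $b_\pm(\alpha)$). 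Consequently $i_\alpha(\alpha_h)$ is a $\rho_\bU$-arc of length at most $\sqrt{2}$, and $i_\alpha(\beta'')$ is a concatenation of two boundary geodesic subrays of $\partial i_\alpha(\ladder(\alpha))$ joined by this short crossing.

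To finish Case (ii) I would verify that any such concatenation is a $(C, 0)$-quasi-geodesic in $\bU$ with universal $C$, and apply Morse once more. Geometrically, the choice of $h$ in Definition \ref{VH-geodesics} is designed to place the crossing where the hyperbolic strip is at most $\sqrt{2}$ wide, mimicking the behavior of the true hyperbolic geodesic between two points on opposite sides of a narrow strip, which must also transit near its thinnest locus. The main technical obstacle I anticipate is precisely this last verification: one must check that the turning angles where the short crossing meets the two boundary geodesics of the strip are bounded away from $\pi$ uniformly in the heights $s, t$ of $a, b$, so that the piecewise geodesic $[i_\alpha(a), a'] \cup [a', b'] \cup [b', i_\alpha(b)]$ (which lies within $\sqrt{2}$ of $i_\alpha(\beta'')$) is a genuine uniform quasi-geodesic. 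This reduces to an explicit computation in the upper half plane using the formula for $i_\alpha$ to bound the angle between a vertical boundary of $\bS(\ell_{b(\alpha)}(\alpha))$ and a hyperbolic segment of length at most $\sqrt{2}$ meeting it. Combining the bounds from the two cases and pulling back through $i_\alpha^{-1}$ then yields the universal $N$.
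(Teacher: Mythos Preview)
Your approach is essentially the same as the paper's: transport the problem to $\bU$ via the $\sqrt{2}$-bi-Lipschitz map $i_\alpha$, apply the Morse Lemma to $i_\alpha(\beta)$, and then control $d^H_{\rho_\bU}(i_\alpha(\beta''),[i_\alpha(a),i_\alpha(b)])$ by cases on Definition~\ref{VH-geodesics}. In Case~(i) you recover exactly the paper's argument, including $i_\alpha(\beta'')=\beta'$ and the bound $n(1)$ from~(\ref{nbhd}); the paper collects constants as $N=\sqrt{2}(M+n(1))$.

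For Case~(ii) the paper is in fact no more detailed than your outline: it simply asserts that ``VH-geodesics map to quasi-geodesics $\beta'=i_\alpha(\beta'')$ in the corresponding convex subsets of $\bU$'' and leaves the verification to the reader. Your identification of the remaining technical point---uniform quasi-geodesicity of the piecewise path with a short crossing---is exactly what is being swept under the rug. One simplification you may have missed: in subcase~(\ref{3b}) the choice $h=s$ forces $\beta_r$ to be a point, so $i_\alpha(\beta'')$ is just a short arc of length $\le\sqrt{2}$ followed by a single boundary geodesic, which is trivially a uniform quasi-geodesic (a bounded perturbation of a geodesic ray). In subcase~(\ref{3a}) the crossing has length exactly $\ell_{b_\pm(\alpha)}(\alpha)=1$, and your angle argument (or an equivalent local-to-global argument for piecewise geodesics in $\bH^2$) handles it. So your plan is correct and complete modulo that routine verification, which the paper also omits.
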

\begin{proof}
Consider the case that $b(\alpha) = b_\pm(\alpha)$, \ie $\alpha$ is not too short at its balance time.  Then, more explicitly, we have $h = b(\alpha)$ and 
\[\beta'' =  [a,\alpha_{b(\alpha)}(0)]\cdot \alpha_{b(\alpha)}\cdot[\alpha_{b(\alpha)}(\ell_{b(\alpha)}(\alpha)),b].\]  Then $i_\alpha(\beta)$ is a $(\sqrt2,0)$ quasi-geodesic, so by Theorem \ref{Morse}, there is a universal constant $M>0$ such that $d_{\rho_\bU}^H(i_\alpha(\beta), [i_\alpha(a), i_\alpha(b)])\le M$.  Defining $\beta'$  by (\ref{beta'}), using (\ref{nbhd}), and because $\ell_{b(\alpha)}(\alpha)\ge 1$, $d_{\rho_\bU}^H(\beta', [i_\alpha(a),i_\alpha(b)]) \le n(1)$.  The explicit description of $\beta''$ yields $i_\alpha(\beta'') =\beta'$.  Applying $i_\alpha\inverse$ to $\beta'$ and collecting constants, we see that $N = \sqrt2(M+n(1))$ is sufficient, in this case.  

The case that $b(\alpha)\not= b_\pm(\alpha)$ is modeled on the previous paragraph.  However, we have to consider the two relevant alternatives (\ref{3a}) and (\ref{3b}) in Definition \ref{VH-geodesics} to construct $\beta''$.  
The point is that VH-geodesics map to quasi-geodesics $\beta' = i_\alpha(\beta'')$ in the corresponding convex subsets of $\bU$ bounded by bi-infinite geodesics. 
\end{proof}

\begin{definition}\label{normal form} Let $\gamma\subset\widetilde{S_0}$ be a concatenation of saddle connections $\alpha_1\cdot ... \cdot \alpha_n$.  A concatenation $\gamma^{VH} = \beta_1\cdot ...\cdot \beta_n$ of VH-crossings $\beta_i = \beta_{l,i}\cdot \alpha_{h_i}\cdot\beta_{r,i}$ is a \emph{VH-geodesic tracking $\gamma$} if for $i \not = n$,  $\beta_{r,i}$ is a point and $\gamma^{VH}$ joins $\partial \gamma$.
\end{definition}
\noindent It is not hard to see that there is only one VH-gedoesic $\gamma^{VH}$ tracking $\gamma$. 
\begin{lemma}\label{HVgeodesics} Let $\gamma\subset\widetilde{S_0}$ be a concatenation of saddle connections $\alpha_1\cdot ... \cdot \alpha_n$, let $\gamma^{VH}$ the VH-geodesic tracking $\gamma$, and let  $\gamma_\ladder^*$ be the $(\ladder,d_\ladder)$-geodesic joining $\partial \gamma$.  Then $d_\ladder^H(\gamma^{VH}, \gamma_\ladder^*)\le N$.  
\end{lemma}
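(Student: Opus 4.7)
The plan is to show that the VH-geodesic $\gamma^{VH}$ is a uniform quasi-geodesic in $(\ladder(\gamma),d_\ladder)$ and then invoke the Morse Lemma (Theorem \ref{Morse}), which applies because $\ladder(\gamma)$ is $\delta$-hyperbolic with $\delta$ depending only on $S$ and $\epsilon$ by Proposition \ref{quasi-convex}.

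First I would establish that each individual VH-crossing $\beta_i = \beta_{l,i}\cdot \alpha_{h_i}\cdot \beta_{r,i}$ is a uniform $(K_0,C_0)$-quasi-geodesic in the sub-ladder $\ladder(\alpha_i)$. By Lemma \ref{Hdist}, $\beta_i$ lies within Hausdorff distance $N$ of the $d_\ladder$-geodesic in $\ladder(\alpha_i)$ joining its endpoints; a standard fact is that any path within bounded Hausdorff distance of a geodesic is a quasi-geodesic with constants depending only on the bound. Moreover, each $\ladder(\alpha_i)$ is $L_1$-quasi-convex in $\widetilde{\model M}$ (Proposition \ref{quasi-convex}), and the path metric on $\ladder(\alpha_i)$ is uniformly quasi-isometric to the restriction of $d_\ladder$ on $\ladder(\gamma)$. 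So each $\beta_i$ is also a uniform quasi-geodesic in $\ladder(\gamma)$.

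Next, I would verify that the concatenation $\gamma^{VH}=\beta_1\cdot \ldots\cdot \beta_n$ is globally a uniform quasi-geodesic in $\ladder(\gamma)$. The key structural observation is that, by Definition \ref{normal form}, consecutive VH-crossings meet at a single point on the shared vertical line $\{v_i\}\times \bR$, which is a bi-infinite geodesic in $\ladder(\gamma)$. As a consequence $\gamma^{VH}$ does not backtrack: under the projection $\ladder(\gamma)=\widetilde\gamma\times\bR\to \widetilde\gamma$, its image traverses $\widetilde\gamma$ monotonically. Combining this non-backtracking property with the uniform local quasi-geodesic estimate and the local-to-global principle for quasi-geodesics in $\delta$-hyperbolic spaces (see e.g.\ \cite{BH}), the concatenation is a uniform $(K,C)$-quasi-geodesic in $\ladder(\gamma)$. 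An application of Theorem \ref{Morse} then yields the desired uniform Hausdorff bound.

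The main technical obstacle is the handling of transitions between consecutive VH-crossings whose individual lengths may fall below the threshold required by the local-to-global principle. To deal with this, I would group any run of short consecutive crossings $\beta_i\cdot\ldots\cdot\beta_j$ into a single piece supported in the sub-ladder $\ladder(\alpha_i\cup\ldots\cup\alpha_j)$ (still quasi-convex by Proposition \ref{quasi-convex}) and reapply Lemma \ref{Hdist}-style reasoning directly, using the $\sqrt{2}$-bi-Lipschitz identifications $i_{\alpha_k}$ with strips in $\bU$ and comparing with the common perpendicular of the appropriate convex region. This reduces the inductive step to a planar hyperbolic geometry computation, and the uniform constants $K,C$, and hence the final Hausdorff constant $N$ (depending only on $S$ and $\epsilon$), emerge at the end.
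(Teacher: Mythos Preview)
Your approach is correct in outline but takes a more circuitous route than the paper. The paper observes that the individual $\sqrt{2}$-bi-Lipschitz maps $i_{\alpha_i}:\ladder(\alpha_i)\to \bS(\ell_{b(\alpha_i)}(\alpha_i))\subset\bU$ can be glued along their shared boundary geodesics (after post-composing with suitable hyperbolic isometries) to produce a \emph{single} $\sqrt{2}$-bi-Lipschitz map from all of $\ladder(\gamma)$ onto a convex region of $\bH^2$ bounded by bi-infinite geodesics. In that planar region the image of $\gamma^{VH}$ is the familiar ``follow common perpendiculars between successive boundary lines'' path, a classical uniform quasi-geodesic in $\bH^2$; one applies the Morse Lemma once and pulls back. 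This sidesteps the local-to-global principle, the grouping of short crossings, and the bookkeeping at transitions entirely. Your argument would work, and in fact your final paragraph edges toward the paper's idea, but the global planar model is considerably cleaner and makes the uniformity of $N$ transparent.

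One genuine gap: the ``standard fact'' you invoke---that any path within bounded Hausdorff distance of a geodesic is a quasi-geodesic---is false without control on the parameterization (a path can oscillate arbitrarily inside an $N$-tube). For VH-crossings this is not a real obstacle, since they are concatenations of at most three geodesic segments and the proof of Lemma~\ref{Hdist} already exhibits each one as the $i_\alpha^{-1}$-image of an explicit uniform quasi-geodesic $\beta'$ in $\bU$; but you should cite that structure directly rather than the incorrect general principle.
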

\begin{proof}The ladder $\ladder(\gamma) = \ladder(\alpha_i)\cup_\partial ... \cup_\partial \ladder(\alpha_n)$ is $\sqrt2$-bi-Lipschitz equivalent to a convex region in the hyperbolic plane bounded by bi-infinite geodesics obtained by gluing together translates by hyperbolic isometries of the maps $i_{\alpha_i}$ along their common boundaries.  It is well known that the image of $\gamma^{VH}$ mimics the behavior of the geodesic segment in $\bU$ joining the image of $\partial \gamma$.  In particular, Definition \ref{normal form} guarantees that $\gamma^{VH}$ does not backtrack.  Collecting constants as in the proof of Lemma \ref{Hdist} yields the lemma.  
\end{proof}

We would now like to understand geodesics in $\ladder_+(\alpha)$, as in Lemma \ref{Hdist}.  There are two cases: either $\alpha$ is balanced in $\widetilde {\model{N}}$, \ie $b(\alpha)\ge 0$, or else $b(\alpha)<0$ and $\alpha$ is balanced in $\widetilde{\model M}\setminus \widetilde {\model N}$.  
\begin{definition}\label{VH-geodesics}
Let $\alpha\subset \widetilde{S_0}$ be a saddle connection, and let $x_s, y_t$ be points on the two different components of $\partial \ladder_+(\alpha)$, so that $s,t\ge 0$.  Say that a VH-path $\gamma = \beta_r\cdot \alpha_h\cdot \beta_l$ joining $x_s$ and $y_t$ is a \emph{VH\textsubscript{$+$}-crossing}, if $\beta_r$ and $\beta_l$ are geodesics parameterized by arclength, $\alpha_h$ is a $d_h$-geodesic segment and
\begin{enumerate}[(i)]
\item if  $b_+(\alpha) \ge 0$, then $\gamma$ is a VH-geodesic.  
\item if $b_+(\alpha) < 0$, then $h = 0$.  
\end{enumerate}
\end{definition}
Given $\hat{b}\in \bR$, let $\Theta(\hat{b}) = \{z: 1>\cos(\arg(z))\ge \tanh(-\hat{b})\}\subset \bU$, so that $\partial \Theta(\hat{b})$ contains a component of $ \partial\cN_{\hat{b}}(\mathcal O)$; if $\hat{b} \ge0$,  then $\Theta(\hat{b})\supset \mathcal O$, while $\Theta(\hat{b})\cap \mathcal O=\emptyset$, otherwise.   Since $i_\alpha$ maps $\alpha_t = (\alpha, t)$ to the boundary of the $t$-neighborhood of $\mathcal O$, we see that 
\[i_\alpha|_{\ladder_+(\alpha)}: \ladder_+(\alpha) \to \bS(\ell_{b(\alpha)}(\alpha))\cap \Theta (b(\alpha))\]
is a $\sqrt2$-bi-Lipschitz diffeomorphism taking  $\widetilde{S_0}\cap \ladder_+(\alpha)$ into $\partial \Theta(b(\alpha))$.   \begin{lemma} \label{Hdistplus1}Suppose $\alpha$ is a saddle connection such that $b_+(\alpha)\ge 0$, and let $a, b$ be points on different components of $\partial \ladder_+(\alpha)$.  Let $\beta$ the be $(\ladder, d_\ladder)$-geodesic joining $a$ and $b$, and let $\beta_+$ be a $(\ladder_+, d_\ladder)$-geodesic joining $a$ and $b$.   There is a constant $N'=N'(\epsilon,S)$ such that $d^H_\ladder(\beta, \beta_+) \le N'$.  

Let $\beta_+''$ bet the VH\textsubscript{$+$}-crossing joining $a$ and $b$.  Then $d^H_\ladder(\beta_+'', \beta_+)\le N'$, as well.
\end{lemma}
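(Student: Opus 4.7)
The plan is to show that the full-ladder geodesic $\beta$ lies in a uniform $d_\ladder$-neighborhood of $\ladder_+(\alpha)$, and then produce $\beta_+$ from $\beta$ via nearest point projection combined with the Morse Lemma inside the half-ladder.  I would split into two cases based on the sign of $b(\alpha)$.  If $b(\alpha) \ge 0$, a direct case check of the VH-crossing construction using $s, t \ge 0$ shows the horizontal level $h$ satisfies $h \ge 0$, so the VH-crossing $\beta''$ is entirely contained in $\ladder_+(\alpha)$, and by Lemma \ref{Hdist}, $\beta$ lies in the $N$-neighborhood of $\beta''$, hence of $\ladder_+(\alpha)$.  If instead $b_+(\alpha) \ge 0 > b(\alpha)$, the full-ladder geodesic $\beta$ may dip below $t = 0$, and controlling the excursion will require the bounded geometry hypothesis.

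The bounded geometry assumption enters as follows.  Since the Teichm\"uller geodesic $(Y_t)$ stays in the $\epsilon$-thick part of Teichm\"uller space and the identity $(\widetilde{S_t}, \iota_t^*d) \to \widetilde{X_t}$ is a uniform $(L,c)$-quasi-isometry by Theorem \ref{model map}(ii), saddle connections in the universal cover have $|\QQ_t|$-length uniformly bounded below by some $c(\epsilon, S) > 0$ at their balance times.  Applied to $\alpha$, this gives $\ell_{b(\alpha)}(\alpha) \ge c(\epsilon,S)$, whence $b_+(\alpha) - b_-(\alpha) = \cosh^{-1}\bigl(\ell_{b(\alpha)}(\alpha)^{-2}\bigr) \le K_0(\epsilon, S)$; combined with $b_-(\alpha) \le 0 \le b_+(\alpha)$ in this case, we conclude $|b(\alpha)| \le K_0$.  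Transferring via $i_\alpha$ to the convex strip $\bS \subset \bU$, the $\bU$-geodesic $G$ joining $i_\alpha(a)$ and $i_\alpha(b)$ excurses at most $K_0 + O(1)$ hyperbolic units from $\bS \cap \Theta(b(\alpha))$, because $\partial \Theta(b(\alpha))$ lies within hyperbolic distance $K_0$ of the geodesic $\mathcal O$ while the endpoints of $G$ lie in $\bS \cap \Theta(b(\alpha))$.  Pulling back through the $\sqrt 2$-bi-Lipschitz $i_\alpha^{-1}$ places $\beta$ in a uniform $d_\ladder$-neighborhood of $\ladder_+(\alpha)$ in this case as well.

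With $\beta \subset \cN_r(\ladder_+(\alpha))$ for some $r = r(\epsilon, S)$, the nearest point projection $\pi_+$ onto $\ladder_+(\alpha)$ moves points by at most $r$; by Proposition \ref{quasi-convex} applied to $\ladder_+(\alpha) \hookrightarrow \widetilde{\model N}$, the space $(\ladder_+(\alpha), d_\ladder)$ is $\delta'(\epsilon,S)$-hyperbolic, so $\pi_+(\beta)$ is a quasi-geodesic with constants depending only on $\epsilon, S$, and Theorem \ref{Morse} inside $\ladder_+(\alpha)$ gives $d^H_\ladder(\pi_+(\beta), \beta_+) = O(1)$.  Combining with $d^H_\ladder(\beta, \pi_+(\beta)) \le r$ yields $d^H_\ladder(\beta, \beta_+) \le N_1(\epsilon, S)$.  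For the second claim, the hypothesis $b_+(\alpha) \ge 0$ forces the $\mathrm{VH}_+$-crossing $\beta_+''$ to coincide with the VH-crossing $\beta''$ by definition, and by Lemma \ref{Hdist} $\beta''$ is within $d_\ladder$-Hausdorff distance $N$ of $\beta$; combining with the first claim via the triangle inequality for Hausdorff distance concludes the proof with $N' := \max\{N_1, N + N_1\}$.  The main obstacle is securing the bound $|b(\alpha)| \le K_0$ in the case $b(\alpha) < 0$, which is the only place $\inj(M) \ge \epsilon$ is used in a non-routine way; the rest is a standard application of the Morse lemma combined with nearest point projection onto a quasi-convex subset.
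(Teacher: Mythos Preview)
Your argument is correct, but you are doing more work than necessary. The case split on the sign of $b(\alpha)$ is superfluous: your ``direct case check'' for $b(\alpha)\ge 0$ already covers the case $b_+(\alpha)\ge 0>b(\alpha)$. Indeed, when $b(\alpha)<b_+(\alpha)$ we are in case (ii) of Definition~\ref{VH-geodesics}, and since $s\ge 0>b(\alpha)>b_-(\alpha)$ the alternative $s\le b_-(\alpha)$ is impossible; the remaining alternatives give $h=b_+(\alpha)\ge 0$ or $h=s\ge 0$. So $\beta''\subset\ladder_+(\alpha)$ always, under the sole hypothesis $b_+(\alpha)\ge 0$, and your bounded-geometry estimate $|b(\alpha)|\le K_0$ is never invoked. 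This is essentially how the paper proceeds: it notes $\beta''=\beta_+''\subset\ladder_+(\alpha)$, transports via $i_\alpha$ to $\bS\cap\Theta(b(\alpha))$, and observes that the image is a uniform quasi-geodesic in both the full strip and the truncated region, whence $\beta''$ is Hausdorff-close to both $\beta$ and $\beta_+$.

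One small point: your projection step (``$\pi_+(\beta)$ is a quasi-geodesic in $\ladder_+$'') deserves a word of justification, since knowing only that $\pi_+$ moves points by at most $r$ does not immediately control the $d_{\ladder_+}$-length of the projected path. The cleanest fix is to bypass the projection entirely and argue, as the paper does, that $\beta''$ is itself a uniform $(\ladder_+,d_\ladder)$-quasi-geodesic (via $i_\alpha$), which you already know is within $N$ of $\beta$; then the Morse Lemma in $\ladder_+$ gives $d_\ladder^H(\beta'',\beta_+)\le N'$ directly.
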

\begin{proof}The VH-crossing and VH\textsubscript{$+$}-crossing $\beta''$ joining $a$ and $b$ coincide. Then $i_\alpha(\beta'')\subset \bS(\ell_{b(\alpha)}(\alpha))\cap \Theta (b(\alpha))$ is quasi-geodesic in both $\bS(\ell_{b(\alpha)}(\alpha))\cap \Theta (b(\alpha))$ and $\bS(\ell_{b(\alpha)}(\alpha))$.  The geodesic joining $i_\alpha(a)$ and $i_\alpha(b)$ in  $\bS(\ell_{b(\alpha)}(\alpha))\cap \Theta(b(\alpha))$ has bounded Hausdorff distance from the geodesic $[i_\alpha(a),i_\alpha(b)]\subset \bS(\ell_{b(\alpha)}(\alpha))$. Mapping these geodesics back to $\ladder$ via $i_\alpha\inverse$, we again obtain $(\sqrt2,0)$-quasi-geodesics.  The conclusion follows from the proof of Lemma \ref{Hdist} and an application of the triangle inequality.  
\end{proof}
\begin{definition}\label{normal form +} Let $\gamma\subset\widetilde{S_0}$ be a concatenation of saddle connections $\alpha_1\cdot ... \cdot \alpha_n$.  A concatenation $\gamma_+^{VH} = \beta_1\cdot ...\cdot \beta_n$ of VH\textsubscript{$+$}-crossings $\beta_i = \beta_{l,i}\cdot \alpha_{h_i}\cdot\beta_{r,i}$ is a \emph{VH\textsubscript{$+$}-geodesic tracking $\gamma$} if for $i \not = n$,  $\beta_{r,i}$ is a point and $\gamma_+^{VH}$ joins $\partial \gamma$.
\end{definition}
\noindent Again,  $\gamma_+^{VH}$ is uniquely determined by these conditions and $\gamma$.  The following is immediate from Definition \ref{normal form +} and Lemma \ref{Hdistplus1}.
\begin{lemma}\label{cor:normal}Suppose that $\gamma\subset\widetilde{S_0}$ is a concatenation of saddle connections $\alpha_1\cdot ... \cdot \alpha_n$, $b_+(\alpha_i)>0$ for each $i$, and $\gamma^{VH} =\beta_1\cdot ...\cdot \beta_n$ is the VH-geodesic tracking $\gamma$.  Then $\gamma^{VH} = \gamma^{VH}_+\subset \ladder_+$ is a uniform $(\ladder,d_\ladder)$-quasi-geodesic and a uniform $(\ladder_+,d_\ladder)$-quasi-geodesic.  In particular,  if $\gamma^*_\ladder$ and $\gamma^*_{\ladder_+}$ are $(\ladder,d_\ladder)$ and $(\ladder_+, d_\ladder)$ geodesics joining $\partial \gamma$, respectively, then $d_\ladder^H(\gamma^{VH}, \gamma_\ladder^*)\le N'$ and $d_\ladder(\gamma^{VH}, \gamma_{\ladder_+}^*)\le N'$.  
\end{lemma}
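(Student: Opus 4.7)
The plan is to unwind the definitions to identify $\gamma^{VH}$ with $\gamma^{VH}_+$, verify that this common path lies in $\ladder_+$, and then combine Lemma \ref{HVgeodesics} with hyperbolicity of $\ladder_+$ to obtain the quasi-geodesic property and Hausdorff bounds in the subspace.

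First, since $b_+(\alpha_i)>0$ for each $i$, case (i) of the definition of a VH\textsubscript{$+$}-crossing says that across each $\alpha_i$ a VH\textsubscript{$+$}-crossing is just a VH-crossing.  Hence the pieces $\beta_i$ composing $\gamma^{VH}$ are themselves valid VH\textsubscript{$+$}-crossings, and the defining conditions for $\gamma^{VH}_+$ are satisfied by $\gamma^{VH}$; by uniqueness of the concatenation tracking $\gamma$, we obtain $\gamma^{VH}=\gamma^{VH}_+$.  To see $\gamma^{VH}\subset \ladder_+$, I would inductively check that each horizontal height $h_i$ is non-negative.  The initial endpoint of $\beta_1$ lies on $\widetilde{S_0}$ at height $0$, and the matching condition ``$\beta_{r,i}$ is a point'' forces the starting height of $\beta_{i+1}$ to equal $h_i$.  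A routine case analysis of the VH-crossing definition under the hypotheses $b_+(\alpha_i)>0$ and starting height $s\ge 0$ rules out the only subcase producing negative $h_i$ (case (ii)(a) with $s\le b_-(\alpha_i)<0$, which contradicts $s\ge 0$).  Since the vertical portions of each $\beta_i$ interpolate between heights of the same sign, we conclude $\beta_i\subset \ladder_+(\alpha_i)$ and therefore $\gamma^{VH}\subset \ladder_+$.

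By Lemma \ref{HVgeodesics}, $\gamma^{VH}$ is a uniform $(\ladder,d_\ladder)$-quasi-geodesic.  Because $\gamma^{VH}\subset \ladder_+$, its arc length computed in $\ladder_+$ agrees with its arc length in $\ladder$, while $d_\ladder$-distance in $\ladder_+$ dominates $d_\ladder$-distance in $\ladder$ pointwise on $\ladder_+$.  Consequently, the same quasi-isometry constants make $\gamma^{VH}$ a uniform $(\ladder_+,d_\ladder)$-quasi-geodesic.  Proposition \ref{quasi-convex} shows that $\ladder_+$ quasi-isometrically embeds into the $\delta$-hyperbolic space $\widetilde{\model N}$, so $(\ladder_+,d_\ladder)$ is itself $\delta'$-hyperbolic for some universal $\delta'$.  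Applying the Morse Lemma (Theorem \ref{Morse}) inside $\ladder_+$ to the quasi-geodesic $\gamma^{VH}$ and the geodesic $\gamma^*_{\ladder_+}$ joining the same endpoints yields a universal bound $d^H_{\ladder_+}(\gamma^{VH},\gamma^*_{\ladder_+})\le D$, which in turn bounds $d^H_\ladder(\gamma^{VH},\gamma^*_{\ladder_+})$.  Combined with the bound $d^H_\ladder(\gamma^{VH},\gamma^*_\ladder)\le N$ from Lemma \ref{HVgeodesics}, setting $N'=\max\{N,D\}$ finishes the proof.  The main technical care is in the inductive height check of the containment step; the remaining steps are essentially automatic given the previous lemmas.
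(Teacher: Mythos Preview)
Your proof is correct. The identification $\gamma^{VH}=\gamma^{VH}_+$ and the inductive height check showing $\gamma^{VH}\subset\ladder_+$ are exactly what is needed, and you spell out the case analysis (which the paper leaves implicit).

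Where you diverge from the paper is in deducing the $(\ladder_+,d_\ladder)$-quasi-geodesic property. The paper's one-line proof cites Lemma~\ref{Hdistplus1}, whose content is precisely that for a single saddle connection with $b_+(\alpha)\ge 0$ the VH-crossing is uniformly close to both the $\ladder$- and the $\ladder_+$-geodesic across that ladder; one then repeats the gluing argument of Lemma~\ref{HVgeodesics} in $\ladder_+$. You bypass Lemma~\ref{Hdistplus1} entirely: you take the $(\ladder,d_\ladder)$-quasi-geodesic bound from Lemma~\ref{HVgeodesics}, observe that arc length is the same while $d_{\ladder_+}\ge d_\ladder$ on $\ladder_+$, conclude $\gamma^{VH}$ is quasi-geodesic in $\ladder_+$ with the same constants, and then invoke hyperbolicity of $\ladder_+$ (from Proposition~\ref{quasi-convex}) and the Morse Lemma. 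This is a softer, more general argument that would work for any quasi-geodesic lying in a hyperbolic subspace, whereas the paper's route uses the local hyperbolic-plane picture of each $\ladder_+(\alpha_i)$ built up specifically for this purpose. Both buy the same conclusion; yours reuses Lemma~\ref{HVgeodesics} wholesale and avoids rerunning the per-piece analysis, at the cost of an extra pass through the Morse Lemma.
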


\begin{figure}[h]
\def\svgwidth{\columnwidth}
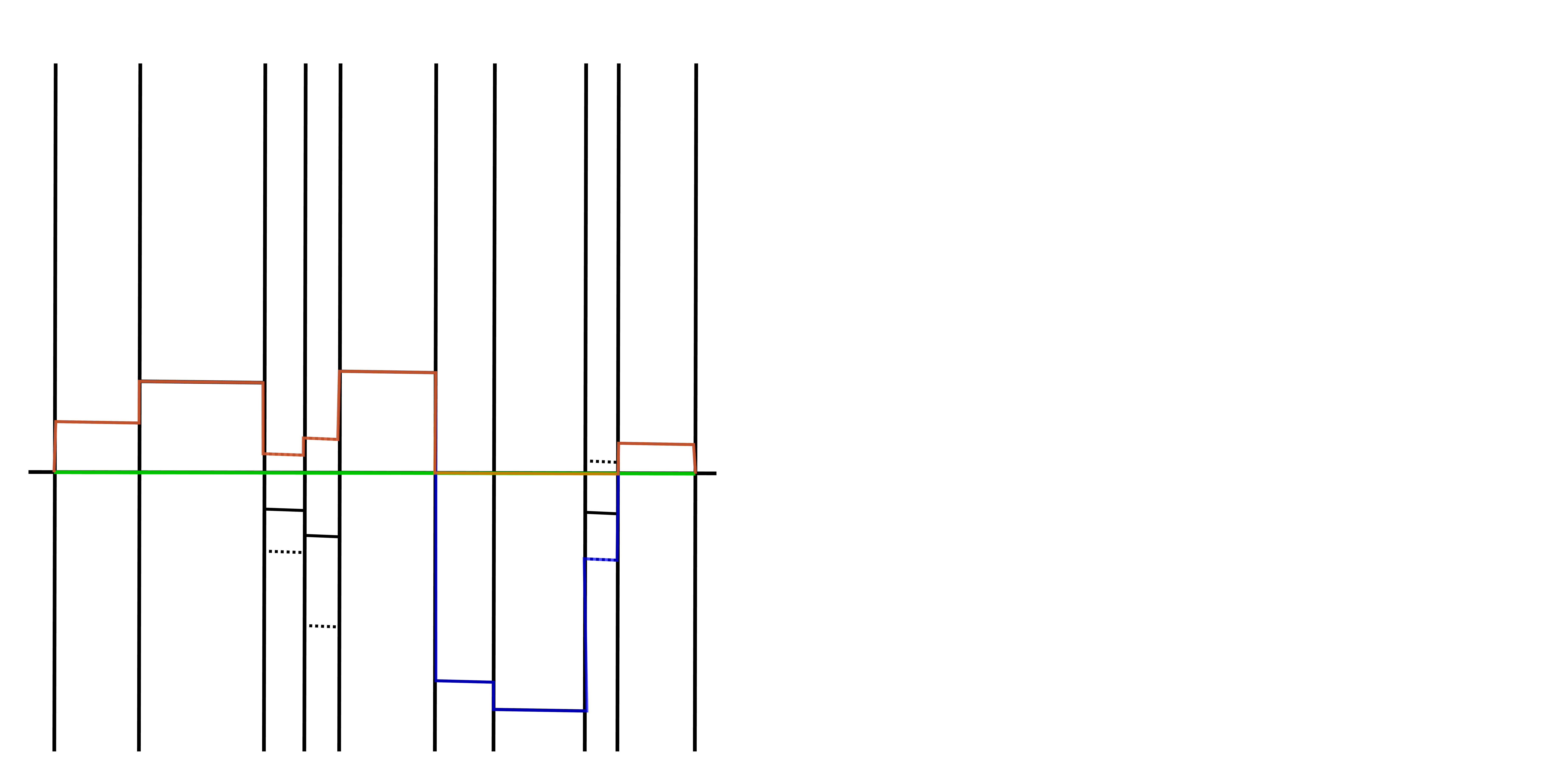
\caption{On the left is $\gamma^{VH}\subset \ladder(\gamma)$ and $\gamma^{VH}_+\subset\ladder_+(\gamma)$; on the right is the corresponding $\sqrt{2}$-bi-Lipschitz convex subset of $\bH^2$.  The partition from the proof of Theorem \ref{almost bounded distance} is $I_1 = [[1,5]]$, $I_2 = [[6,8]]$, and $I_3 = [[9]]$. }
\label{V}
\end{figure}

Now we model the situation that the saddle connection $\alpha$ is balanced in $\widetilde{\model M}\setminus\widetilde{\model N}$.  If $\hat b< 0$, let $\mathcal O' = \partial \Theta(\hat b) \cap \bS(\ell)$; $\mathcal O'$ has $\partial \mathcal O' = \{p,q\}\subset \partial \bS(\ell)$.  Take \begin{equation} \beta_+' = [a,p]\cdot \mathcal O'\cdot [q,b] \subset \bU. \label{beta'+}\end{equation}

\begin{lemma}\label{Hdistplus2}
Let $\alpha\subset S_0$ be a saddle connection such that $b_+(\alpha)< 0$, let $a, b$ be two points on different components of $\partial \ladder_+(\alpha)$, and let  $\beta_+$ be a $(\ladder_+,d_\ladder)$-geodesic segment joining them.  Swapping $a$ and $b$ if necessary, let  $\beta_+''$ be the VH\textsubscript{$+$}-crossing \[\beta_+'' = [a,\alpha_{0}(0)]\cdot \alpha_{0}\cdot[\alpha_{0}(\ell_{0}(\alpha)),b].\] There is a  universal constant $N''$ such that $d_\ladder^H(\beta_+, \beta_+'') \le N''$.
\end{lemma}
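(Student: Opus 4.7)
The plan is to follow the strategy used in Lemmas~\ref{Hdist} and~\ref{Hdistplus1}, transporting the problem into $2$-dimensional hyperbolic geometry via the $\sqrt{2}$-bi-Lipschitz map
\[
i_\alpha\colon \ladder_+(\alpha) \longrightarrow \Omega := \bS(\ell_{b(\alpha)}(\alpha)) \cap \Theta(b(\alpha)) \subset \bU.
\]
Under this identification, the bottom slice $\alpha_0$ maps onto the hypercycle $\mathcal{O}' = \partial \Theta(b(\alpha)) \cap \bS(\ell_{b(\alpha)}(\alpha))$ at Poincar\'e distance $-b(\alpha)$ from $\mathcal{O}$, the two vertical boundary components of $\ladder_+(\alpha)$ map to subarcs of the two $\bU$-geodesic components of $\partial \bS(\ell_{b(\alpha)}(\alpha))$, and $i_\alpha(\beta_+'')$ is precisely the concatenation $\beta_+'$ of (\ref{beta'+}). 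The crucial quantitative input is that $b_+(\alpha)<0$ forces $\ell_0(\alpha) \ge \ell_{b_+(\alpha)}(\alpha) = 1$, since $t\mapsto \ell_t(\alpha)$ is a shifted $\cosh$ minimized at $b(\alpha)\le b_+(\alpha)<0$. Equivalently, the two endpoints $p,q$ of $\mathcal{O}'$ on $\partial \bS(\ell_{b(\alpha)}(\alpha))$ are at a universally bounded-below $\bU$-distance.

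First I would verify that $\beta_+' = [i_\alpha(a),p]\cdot \mathcal{O}'\cdot [q,i_\alpha(b)]$ is a $(K_0,K_0)$-quasi-geodesic in $\bU$ with universal $K_0$. The outer arcs are $\bU$-geodesic subsegments of $\partial \bS(\ell_{b(\alpha)}(\alpha))$, and the middle is a hypercycle of constant geodesic curvature. The uniform lower bound on $d_\bU(p,q)$ bounds the angles at which $\mathcal{O}'$ meets $\partial \bS$ at $p$ and $q$ away from flat, independently of $b(\alpha)$ and $\ell_{b(\alpha)}(\alpha)$; standard hyperbolic geometry then gives the universal quasi-geodesic estimate. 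Next, $i_\alpha(\beta_+)$ is the intrinsic geodesic of $\Omega$ from $i_\alpha(a)$ to $i_\alpha(b)$. Because $\bS(\ell_{b(\alpha)}(\alpha))$ is $\bU$-convex while $\mathcal{O}'$ has constant geodesic curvature, this intrinsic geodesic either equals $[i_\alpha(a),i_\alpha(b)]$ or consists of a $\bU$-geodesic segment into $\mathcal{O}'$, a subsegment of $\mathcal{O}'$, and a $\bU$-geodesic segment out of $\mathcal{O}'$. In either case the same length estimate exhibits it as a universal $\bU$-quasi-geodesic.

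With both $i_\alpha(\beta_+)$ and $i_\alpha(\beta_+'')$ exhibited as universal quasi-geodesics in the $\log\sqrt{3}$-hyperbolic space $\bU$ sharing endpoints, the Morse Lemma (Theorem~\ref{Morse}) bounds their $\bU$-Hausdorff distance by a universal constant, and pulling back through the $\sqrt{2}$-bi-Lipschitz $i_\alpha^{-1}$ yields the desired universal $N''$ with $d_\ladder^H(\beta_+,\beta_+'')\le N''$. The main obstacle is securing these quasi-geodesic constants uniformly as $b(\alpha)\to -\infty$: the hypercycle $\mathcal{O}'$ has multiplicative distortion $\cosh(-b(\alpha))$ compared with its projection onto $\mathcal{O}$, which grows without bound. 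The estimate is rescued by the fact that $\mathcal{O}'$ enters the argument only as a connector between $p$ and $q$, and the universal lower bound $\ell_0(\alpha)\ge 1$ prevents these endpoints from coalescing, thereby controlling every constant that feeds into the Morse Lemma.
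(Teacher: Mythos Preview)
Your overall strategy matches the paper's: transport via $i_\alpha$ to the region $\Omega = \bS(\ell_{b(\alpha)}(\alpha))\cap\Theta(b(\alpha))$, identify $i_\alpha(\beta_+'')=\beta_+'$, and invoke the Morse Lemma. The gap is in \emph{where} you run the Morse Lemma. You assert that $\beta_+'$ and $i_\alpha(\beta_+)$ are universal quasi-geodesics in $(\bU,\rho_\bU)$, but this is false when $-b(\alpha)$ is large. The hypercycle $\mathcal O'$ at distance $r=-b(\alpha)$ from $\mathcal O$ is only a $(\cosh r,\,C_r)$-quasi-geodesic in $\bU$: two points on $\mathcal O'$ whose feet on $\mathcal O$ are at distance $d$ are at hypercycle arclength $d\cosh r$ but $\bU$-distance roughly $d+2r$ once $d$ is large. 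Concretely, if $\ell_{b(\alpha)}(\alpha)$ is large (so $b_\pm(\alpha)=b(\alpha)<0$) and $a=p$, $b=q$, then $\beta_+'=\mathcal O'$ has $\bU$-length $\ell_{b(\alpha)}(\alpha)\cosh r$ while $d_\bU(p,q)\approx \ell_{b(\alpha)}(\alpha)+2r$, and the ratio is unbounded in $r$. Your lower bound $\ell_0(\alpha)\ge 1$ keeps $p$ and $q$ from coalescing but does nothing to tame this multiplicative distortion; the same issue afflicts $i_\alpha(\beta_+)$, since the intrinsic $\Omega$-geodesic from $p$ to $q$ is $\mathcal O'$ itself (the $\bU$-chord $[p,q]$ exits $\Omega$ through $\mathcal O'$).

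The paper sidesteps this by working in the \emph{path metric on $\Omega$} rather than in $\bU$. There $i_\alpha(\beta_+)$ is a $(\sqrt2,0)$-quasi-geodesic (it is the $i_\alpha$-image of the $d_\ladder$-geodesic $\beta_+$), and $\beta_+'$ is uniformly quasi-geodesic because $\alpha_0$ is the \emph{shortest} horizontal crossing of $\ladder_+(\alpha)$ (since $t\mapsto\ell_t(\alpha)$ is increasing on $[0,\infty)$ when $b(\alpha)<0$), so the VH$_+$-crossing does not backtrack. Uniform hyperbolicity of $(\Omega,d_\Omega)$ comes for free from the $\sqrt2$-bi-Lipschitz identification with $\ladder_+(\alpha)$, which is $\delta'$-hyperbolic by Proposition~\ref{quasi-convex}. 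Running the Morse Lemma in $(\Omega,d_\Omega)$ then gives the universal bound. Your argument becomes correct once you replace ``quasi-geodesic in $\bU$'' by ``quasi-geodesic in the path metric on $\Omega$'' throughout.
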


\begin{proof}
This is directly analogous to the proof of Lemma \ref{Hdist} when $\alpha$ is not short at balance time.  Define $\beta_+'$ by (\ref{beta'+}).  The key point is that $\beta_+' = i_\alpha(\beta_+'')$, and $\beta_+'$ is uniformly quasi-geodesic in the path metric on $\Theta(b(\alpha))\cap \bS(\ell_{b(\alpha)}(\alpha))$ (which is no longer convex in $\bU$).  This is independent of whether $\alpha$ is short at balance time, because $b_+(\alpha)<0$ by assumption. 
\end{proof}

\noindent We are now in a position to prove the main result from this section.  
\begin{proof}[Proof of Theorem \ref{almost bounded distance}]

By our choice of $p\in \Sigma$ and fixed lift $\tilde p$, the lift $\gamma\subset \widetilde{S_0}$ based at $\tilde p$ of any $d_0$-geodesic loop $\overline\gamma \subset S_0$ based at $p$ decomposes as a concatenation of saddle connections, \ie $\gamma = \alpha_1\cdot ... \cdot \alpha_n \subset \widetilde{S_0}$.  Now, build the ladders $\ladder_+(\gamma)\subset \ladder(\gamma)\subset \widetilde{\model M}$.  The ladders decompose as the union $\ladder(\gamma) = \ladder(\alpha_1)\cup_\partial ... \cup_\partial \ladder(\alpha_n)$.  Let $\gamma^*_\ladder$ be a $(\ladder, d_\ladder)$-geodesic and $\gamma^*_{\ladder_+}$ a $(\ladder_+, d_\ladder)$-geodesic.  

For integers $i\le j$, let $[[i,j]] = [ i,j]\cap \bN$.  Starting with $i_1 = 1$, we iteratively construct a partition $1 = i_1 \le t_1 < i_2\le t_2 < ... <i_k \le t_k = n$ so that the subintervals $I_j=[[i_j,t_j]]$ satisfy the following three properties:
\begin{enumerate}
\item If $b_+(\alpha_{i_j})\ge 0$, then $b_+(\alpha_m)\ge 0$ for each $m\in I_j$, and either $t_j = n$ or $b_+(\alpha_{t_j+1})<0$. \label{above}
\item If $b_+(\alpha_{i_j})<0$, then $b_-(\alpha_m)< 0$ for each $m\in I_j$ and either $t_j = n$ or $b_-(\alpha_{t_j+1})\ge0$. \label{below}
\item For $j\ge 1$, $i_{j+1} = t_j +1$.   
\end{enumerate}
Let $\alpha_{I_j} = \alpha_{i_j}\cdot ... \cdot \alpha_{t_j}$ or $\alpha_{I_j} = \alpha_{i_j}$ if $i_j = t_j$, and set $\ladder(\alpha_{I_j}) = \ladder(\alpha_{i_j})\cup_\partial ... \cup_\partial \ladder(\alpha_{t_j})$ and $\ladder_+(\alpha_{I_j}) = \ladder(\alpha_{I_j}) \cap \ladder_+(\gamma)$.  

\begin{claim}\label{main claim}
There exists a number $D = D(\epsilon, S)$ such that for each $j = 1, ..., k$, we have \[\gamma_{\ladder_+}^*\cap \ladder(\alpha_{I_j}) \subset\cN_{D}((\gamma_\ladder^*\cap \ladder(\alpha_{I_j}))\cup \alpha_{I_j}).\]  That is, restricted to each subladder $\ladder(\alpha_{I_j})$, $\gamma_{\ladder_+}^*$ is close to $\gamma_\ladder^*$ or $\alpha_{I_j}\subset \widetilde{S_0}$.  Moreover, 
$\gamma_{\ladder}^*\cap \ladder_+(\alpha_{I_j})\subset \cN_D(\gamma_{\ladder_+}^*)$. 
\end{claim}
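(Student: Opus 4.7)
The plan is to replace both $\gamma^*_\ladder$ and $\gamma^*_{\ladder_+}$ by their combinatorial tracking paths $\gamma^{VH}$ and $\gamma^{VH}_+$ and then compare these tracking paths on each sub-ladder $\ladder(\alpha_{I_j})$. Lemma~\ref{HVgeodesics} already gives $d^H_\ladder(\gamma^{VH},\gamma^*_\ladder)\le N$. My first step would be to establish the analog $d^H_\ladder(\gamma^{VH}_+,\gamma^*_{\ladder_+})\le N'$ uniformly in $\gamma$ by realizing $\ladder_+(\gamma)$ as the $\sqrt{2}$-bi-Lipschitz image of the glued union of the $\bU$-regions $\bS(\ell_{b(\alpha_m)}(\alpha_m))\cap \Theta(b(\alpha_m))$, checking via Definition~\ref{normal form +} that $\gamma^{VH}_+$ does not backtrack at the joins, and applying the Morse Lemma in the resulting $\delta'$-hyperbolic space. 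This generalizes Lemmas~\ref{Hdistplus1} and~\ref{Hdistplus2} from a single sub-ladder to the whole of $\ladder_+(\gamma)$.

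With both combinatorial replacements in hand I would analyze the two types of intervals in the partition. For $I_j$ of type~\ref{above}, every $m\in I_j$ satisfies $b_+(\alpha_m)\ge 0$, so clause (i) in the definition of VH\textsubscript{$+$}-crossing makes each such crossing equal the corresponding VH-crossing, and a direct inspection of the cases in the definition of VH-crossing forces the horizontal heights $h_i$ to be nonnegative. Hence $\gamma^{VH}\cap \ladder(\alpha_{I_j})$ and $\gamma^{VH}_+\cap \ladder(\alpha_{I_j})$ agree up to at most two transitional boundary segments of uniformly bounded diameter, and both containments in the claim follow from the trackings established in the first paragraph. For $I_j$ of type~\ref{below}, every $m\in I_j$ satisfies $b_-(\alpha_m)<0$: if additionally $b_+(\alpha_m)<0$, Lemma~\ref{Hdistplus2} forces the local VH\textsubscript{$+$}-crossing of $\alpha_m$ along $\alpha_{m,0}\subset \widetilde{S_0}$, while if $b_+(\alpha_m)\ge 0$ the hyperbolic-plane picture of $\bS\cap \Theta(b(\alpha_m))$ still pushes the $(\ladder_+,d_\ladder)$-geodesic against the level-zero boundary. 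Chaining, $\gamma^{VH}_+\cap \ladder(\alpha_{I_j})\subset\cN_D(\alpha_{I_j})$, which gives the first containment. For the second containment in this case, any point of $\gamma^{VH}$ lying in $\ladder_+(\alpha_m)$ must sit near $\partial\ladder_+(\alpha_m)\cap \widetilde{S_0}=\alpha_{m,0}$, since the bulk of $\gamma^{VH}$ is concentrated near the level $b(\alpha_m)<0$; combined with the first containment, this yields the second.

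The principal technical obstacle is the global VH\textsubscript{$+$} tracking statement $d^H_\ladder(\gamma^{VH}_+,\gamma^*_{\ladder_+})\le N'$, because the pieces $\bS(\ell)\cap \Theta(b)$ with $b<0$ are not convex in $\bU$, so one cannot argue directly from the Morse Lemma that a non-backtracking concatenation of local quasi-geodesics is itself quasi-geodesic. What saves the argument is the $\epsilon$-thickness of the Teichm\"uller ray $t\mapsto Y_t$, which translates into a uniform lower bound on the widths $\ell_{b(\alpha_m)}(\alpha_m)$ of the strips involved and thereby bounds all the hyperbolicity and Morse-Lemma constants uniformly in $\gamma$.
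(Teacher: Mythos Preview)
Your overall strategy---replace $\gamma^*_\ladder$ and $\gamma^*_{\ladder_+}$ by their VH and VH\textsubscript{$+$} tracks and compare on each $\ladder(\alpha_{I_j})$---is exactly what the paper does, and your case analysis on the two interval types is essentially correct (in particular, your inductive observation that on a type~\ref{above} interval all VH-crossing heights stay nonnegative is right, and this is precisely Lemma~\ref{cor:normal}). The difference is in how you handle the obstacle you yourself flag.

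Your proposed fix for the global VH\textsubscript{$+$} tracking is wrong. The $\epsilon$-thickness of the Teichm\"uller geodesic does \emph{not} give a uniform lower bound on the balance-time lengths $\ell_{b(\alpha_m)}(\alpha_m)$: a saddle connection of fixed $|\QQ_0|$-length but large slope has balance-time length going to zero, and nothing about $\epsilon$-thickness prevents this (thickness bounds systoles, not individual saddle connections in the universal cover). So the non-convex pieces $\bS(\ell)\cap\Theta(b)$ can be arbitrarily thin, and a bare appeal to uniform Morse constants does not rescue the local-to-global step for $\gamma^{VH}_+$.

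The paper sidesteps this entirely by arguing inductively on the number of intervals rather than establishing global VH\textsubscript{$+$} tracking up front. The key extra observation is that at the boundary between $I_j$ and $I_{j+1}$, both $\gamma^*_\ladder$ and $\gamma^*_{\ladder_+}$ are forced uniformly close to the terminal endpoint $p_{t_j}$ of $\alpha_{t_j}$: for instance, passing from type~\ref{above} to type~\ref{below}, the VH-geodesic contains the vertical segment $\ladder(\alpha_{t_j})\cap\ladder(\alpha_{t_j+1})$ joining $b_+(\alpha_{t_j})\ge 0$ to $b_+(\alpha_{t_j+1})<0$, hence passes through $p_{t_j}$; and since $b_+(\alpha_{t_j+1})<0$, the two components of $\partial\ladder_+(\alpha_{t_j+1})$ are $1$-separated convex sets whose nearest points are the endpoints of $\alpha_{t_j+1}$, forcing $\gamma^*_{\ladder_+}$ close to $p_{t_j}$ as well. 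This common anchor point lets one replace $\gamma$ by $\alpha_{t_j+1}\cdots\alpha_n$ and repeat, so one only ever needs the VH\textsubscript{$+$} tracking on a \emph{single} interval at a time, where Lemmas~\ref{Hdistplus1} and~\ref{Hdistplus2} apply directly. You should reorganize your argument around this restart mechanism rather than the global tracking claim.
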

We perform an inductive argument on the number $k$ of subintervals in this partition.  We will prove the claim on the ladder $\ladder(\alpha_{I_1})$ induced by the first interval and show that we may repeat the argument on $d_\ladder$-geodesics joining $\partial (\alpha_{t_1+1}\cdot ... \cdot \alpha_n)$.   A reformulation of the claim says that $d_\ladder$-geodesics satisfy the conclusion of Theorem \ref{almost bounded distance}.  Since  $d_\ladder$-geodesics approximate $d_{\model M}$ and $d_{\model N}$ geodesics within bounded Hausdorff distance by Proposition \ref{quasi-convex}, after we prove Claim \ref{main claim} and collect constants, the theorem will be proved.  In what follows, `uniformly close,'  or `uniformly bounded distance,' etc. means that there is a constant depending at most on $S$ and $\epsilon$, such that the objects mentioned are distance within that constant of each other.  

\begin{proof}[Proof of Claim \ref{main claim}]
Suppose that $I_1$ satisfies (\ref{above}).  Then either $t_1 = n$ or $b_+(\alpha_{t_1+1})<0$.  If $t_1 = n$, then by Lemma \ref{cor:normal}, $d^H_\ladder (\gamma_\ladder^*, \gamma_{\ladder_+}^*)\le 2N'$.  If $b_+(\alpha_{t_1+1})<0$, then VH-geodesic $\gamma^{VH}$ tracking $\gamma$ contains the vertical segment $\ladder(\alpha_{t_1})\cap \ladder(\alpha_{t_1+1})$ joining $0<b_+(\alpha_{t_1})$ to $b_+(\alpha_{t_1+1})<0$, so $\gamma^{VH}$ intersects the terminal endpoint $p_{t_1}$ of $\alpha_{t_1}$.  Then by Lemma \ref{HVgeodesics},  there is a point $q_{t_1}\in \gamma_\ladder^*$ passing uniformly close to $p_{t_1}$.  Since $b_+(\alpha_{t_1+1})<0$, each component of $\partial \ladder_+(\alpha_{t_1+1})$ is union of $1$-separated convex sets in a $\delta'$-hyperbolic space.  The points $\partial \alpha_{t_1+1}$ (nearly) achieve the distance between the two components of $\partial \ladder_+(\alpha_{t_1+1})$ by our description of $\ladder_+(\alpha_{t_1+1})$ in Lemma \ref{Hdistplus2}, so a point $ q_{t_1,+}\in \gamma_{\ladder_+}^*$ comes uniformly close to $\partial \alpha_{t_1+1}$, hence close to $p_{t_1}$.  By hyperbolicity of ladders, the $(\ladder,d_\ladder)$-geodesic and $(\ladder_+,d_\ladder)$-geodesic segments joining $\partial (\alpha_1\cdot ... \cdot \alpha_{t_1})$ are uniformly close to the $(\ladder,d_\ladder)$-geodesic and $(\ladder_+,d_\ladder)$ segments joining $\gamma(0)$ with $q_{t_1}$ and $q_{t_1,+}$, respectively.  Another application of Lemma \ref{cor:normal} in this case therefore proves the theorem on $\ladder(\alpha_{I_1})$ when $I_1$ satisfies (\ref{above}).

We now consider the case that $I_1$ satisfies (\ref{below}).  We will see that the case that $t_1 = n$ follows similarly as in the case that $t_1\not = n$ and  $b_-(\alpha_{t_1+1})\ge0$.  The VH\textsubscript{$+$}-geodesic $\gamma^{VH}_+$ that tracks $\gamma$ on $\alpha_{I_1}$ is equal to $\alpha_{I_1}$, by construction in Definition \ref{normal form +}.  In the spirit of Lemma \ref{cor:normal}, we may apply Lemma \ref{Hdistplus2} to concatenations of saddle connections to see that $\gamma^{VH}_+$ is within bounded Hausdorff distance of $\gamma_{\ladder_+}^*$ on $\ladder(\alpha_{I_1})$.  By the construction of Definition \ref{VH-geodesics} and the conditions of (\ref{below}), the VH-geodesic $\gamma^{VH}$ that tracks $\gamma$ meets $\ladder_+(\alpha_{I_1})$ only in its initial and terminal vertical segments.  The initial segment meets $\ladder_+(\alpha_{I_1})$ exactly at $\alpha_1(0)$ and the terminal vertical segment of $\gamma^{VH}$ meets $\ladder(\alpha_{t_i})\cap \ladder(\alpha_{t_i+1})$ in a segment containing $[0, b_-(\alpha_{t_1+1})]$; in particular $\ladder_+(\alpha_{I_1})\cap \gamma^{VH} \supset \partial \alpha_{I_1}$.   By Lemma \ref{HVgeodesics}, we can now conclude that $\gamma_{\ladder}^*$ is within uniformly bounded Hausdorff distance from this VH-geodesic on $\ladder(\alpha_I)$.  Thus,  $\gamma_{\ladder}^*$ can only be far from $\gamma_{\ladder_+}^*$ in $\ladder(\alpha_{I_1})$ when $I_1$ satisfies (\ref{below}) and $\gamma_\ladder^*\subset \ladder(\alpha_{I_1})\setminus\ladder_+(\alpha_{I_1})$.  

In both cases, since $\gamma_\ladder^*$ and $\gamma_{\ladder^+}^*$ both come close to the initial point of $\alpha_{t_1+1}$, we can now replace $\gamma$ with $\alpha_{t_1+1} \cdot ... \cdot\alpha_{n}$ and repeat the argument, as promised.  
\end{proof}
\noindent This concludes the proof of the theorem.   
\end{proof}

\section{Zeroing out half the manifold}\label{zero out half the manifold}

Let us recall the notation and set up from Section \ref{volume preserving limit maps}.  We have $\nu = (\lambda_{\scriptscriptstyle-}, \lambda_+)$ where $\lambda_\pm$ have bounded geometry, and $Y_0\in \teich (S)$ was a point on the Teichm\"uller geodesic joining $\lambda_-$ and $\lambda_+$.  We defined $M_{\scriptscriptstyle-} = M_{(\lambda_{\scriptscriptstyle-}, Y_0)}$  and constructed a map $\Phi_{\scriptscriptstyle-}: \core(M_{\scriptscriptstyle-}) \to M_\nu$ inducing $\rho(\nu)\circ \rho(\lambda_{\scriptscriptstyle-}, Y_0)\inverse$ on fundamental groups, that was a $B$-bi-Lipschitz embedding, and that was volume preserving away from a compact product region $\mathcal K$.  Take $\inj(M_{\nu})=\epsilon>0$. First we would like to use our results from Section \ref{ladders}.

\begin{lemma}\label{almost bounded distance 2}
There exists a constant $D'>0$ depending only on $\epsilon$, $S$, and $B$ such that the following holds.
Let $\gamma: I \to M_{\scriptscriptstyle -}$ be a geodesic loop based at $p\in \core(M_{\scriptscriptstyle -})$ and  $\widetilde{\gamma}: I \to \widetilde{M_{\scriptscriptstyle-}}$ be a choice of lift.  Let $\Pi: \widetilde {M_{\nu}} \to \im \tilde\Phi_{-}(\widetilde{\gamma})^*$ be the nearest point projection.  

If $d(\tilde\Phi_{\scriptscriptstyle -}(\widetilde\gamma(t)), \Pi(\tilde\Phi_{\scriptscriptstyle -}(\widetilde\gamma(t)))>D'$, then $\Pi(\tilde\Phi_{\scriptscriptstyle -}(\widetilde\gamma(t)))\in \mathcal N _{D'}(\widetilde{M_{\nu}}\setminus \tilde\Phi_{\scriptscriptstyle -}(\widetilde{\core(M_{\scriptscriptstyle -})}))$.  
\end{lemma}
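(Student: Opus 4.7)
The plan is to reduce Lemma \ref{almost bounded distance 2} to Theorem \ref{almost bounded distance} via the model-manifold machinery of Theorem \ref{model map}.  Because $Y_0$ lies on the Teichm\"uller geodesic joining $\lambda_{\scriptscriptstyle-}$ to $\lambda_+$, the two models are compatible: after an orientation-reversing identification to match the $(\pm)$ conventions of Section \ref{ladders}, $\widetilde{\model M_{\scriptscriptstyle-}}$ embeds isometrically in the path metric into $\widetilde{\model M_\nu}$, and we have uniform $(L,c)$-quasi-isometries $\tilde\Psi_{\scriptscriptstyle-}\colon \widetilde{\model M_{\scriptscriptstyle-}}\to \widetilde{\core(M_{\scriptscriptstyle-})}$ and $\tilde\Psi_\nu\colon \widetilde{\model M_\nu}\to \widetilde{M_\nu}$.

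The central preliminary step, which I expect to be the main technical obstacle, is to show that the two equivariant quasi-isometric embeddings $\widetilde{\model M_{\scriptscriptstyle-}}\to \widetilde{M_\nu}$ given by $\tilde\Psi_\nu|_{\widetilde{\model M_{\scriptscriptstyle-}}}$ and by $\tilde\Phi_{\scriptscriptstyle-}\circ \tilde\Psi_{\scriptscriptstyle-}$ lie at bounded distance $C_0 = C_0(\epsilon, S, B)$. At each integer level $n\le 0$ both restrict to equivariant maps $\widetilde{X_n}\to \widetilde{M_\nu}$ inducing the same homomorphism on $\pi_1(S)$; since $X_n$ is $\epsilon$-thick with diameter bounded by $D(\epsilon, S)$ and $\pi_1(S)$ acts cocompactly on $\widetilde{X_n}$, a standard equivariance argument gives bounded-distance agreement on each such slice. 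Interpolating between slices using the uniform vertical quasi-isometry constants of $\tilde\Psi_{\scriptscriptstyle-}$ and $\tilde\Psi_\nu$ then extends the bound to all of $\widetilde{\model M_{\scriptscriptstyle-}}$, compensating for the fact that $\pi_1(S)$ does not act cocompactly on the model itself.

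Given $\widetilde\gamma\subset \widetilde{\core(M_{\scriptscriptstyle-})}$, pull back to obtain the quasi-geodesic $\tilde\Psi_{\scriptscriptstyle-}\inverse(\widetilde\gamma)\subset \widetilde{\model M_{\scriptscriptstyle-}}$, and let $\bar\gamma_+^*\subset \widetilde{\model M_{\scriptscriptstyle-}}$ and $\bar\gamma^*\subset \widetilde{\model M_\nu}$ be the corresponding model-straightenings in the smaller and larger model, respectively. The Morse Lemma places $\tilde\Psi_{\scriptscriptstyle-}\inverse(\widetilde\gamma)$ within bounded Hausdorff distance of $\bar\gamma_+^*$. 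Theorem \ref{almost bounded distance} then supplies a constant $D_1$: whenever $\bar\gamma_+^*(s)$ lies further than $D_1$ from its nearest-point projection onto $\bar\gamma^*$, that projection lies in $\cN_{D_1}(\widetilde{\model M_\nu}\setminus \widetilde{\model M_{\scriptscriptstyle-}})$.

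Finally, push everything forward by $\tilde\Psi_\nu$ and collect constants. By the $C_0$-closeness, the endpoints of $\tilde\Psi_\nu(\bar\gamma^*)$ lie within $C_0$ of those of $\tilde\Phi_{\scriptscriptstyle-}(\widetilde\gamma)^*$, so Morse in the $\delta$-hyperbolic space $\widetilde{M_\nu}$ places these two curves at bounded Hausdorff distance; similarly $\tilde\Psi_\nu(\bar\gamma_+^*)$ sits at bounded Hausdorff distance from $\tilde\Phi_{\scriptscriptstyle-}(\widetilde\gamma)$, and $\tilde\Psi_\nu(\widetilde{\model M_\nu}\setminus \widetilde{\model M_{\scriptscriptstyle-}})$ lies at bounded Hausdorff distance from $\widetilde{M_\nu}\setminus \tilde\Phi_{\scriptscriptstyle-}(\widetilde{\core(M_{\scriptscriptstyle-})})$, again by the $C_0$-closeness.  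Since nearest-point projections onto geodesics in $\delta$-hyperbolic spaces are coarsely stable under bounded perturbations of source and target, these bounded constants combine into a single $D' = D'(\epsilon, S, B)$ for which the conclusion of Theorem \ref{almost bounded distance} transports directly to the statement of the lemma.
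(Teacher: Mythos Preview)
Your overall strategy coincides with the paper's: reduce to Theorem \ref{almost bounded distance} via the model maps $\tilde\Psi_\nu$ and $\tilde\Psi_{\scriptscriptstyle-}$, the key intermediate step being that $\tilde\Phi_{\scriptscriptstyle-}\circ\tilde\Psi_{\scriptscriptstyle-}$ and $\tilde\Psi_\nu|_{\widetilde{\model M_{\scriptscriptstyle-}}}$ lie at uniformly bounded distance, after which everything is Morse Lemma bookkeeping.

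The gap is in your justification of that intermediate step on level surfaces. Equivariance and cocompactness alone do not give a bound that is \emph{uniform in $n$}: the fact that $\pi_1(S)$ acts cocompactly on $\widetilde{X_n}$ only tells you that $x\mapsto d\bigl(\tilde\Psi_\nu(x),\,\tilde\Phi_{\scriptscriptstyle-}\circ\tilde\Psi_{\scriptscriptstyle-}(x)\bigr)$ is $\pi_1(S)$-invariant and hence equals its value on a fundamental domain of diameter $D(\epsilon,S)$, but nothing you wrote bounds that value independently of $n$. You need an anchor on each slice. The paper supplies one by picking a simple closed geodesic $\alpha\subset X_n$ of length at most the Bers constant $L_0$: then both $\Psi_\nu(\alpha)$ and $\Phi_{\scriptscriptstyle-}(\Psi_{\scriptscriptstyle-}(\alpha))$ are closed curves in $M_\nu$ of length uniformly bounded in terms of $L_0$, $L$, $c$, $B$, in the same free homotopy class, so by Lemma \ref{close curves} each lies within a uniform distance of the geodesic representative $\alpha^*\subset M_\nu$ (using $\inj(M_\nu)\ge\epsilon$), hence within a uniform distance of each other. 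Combined with the diameter bound on $X_n$ and the uniform Lipschitz constants, this gives the uniform $C_0$ you need on every slice. Once you insert this short-curve anchor, the rest of your argument is correct and matches the paper's proof.
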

\begin{proof}
Recall that $\widetilde{\Psi}_\nu: \widetilde{\model M_\nu} \to \widetilde{M_\nu}$ is a quasi-isometry by Theorem \ref{model map}.  So given a geodesic segment $\widetilde{\gamma}'\subset \widetilde{\model M_\nu}$, we have that $ \widetilde{\Psi}_\nu(\widetilde{\gamma}')$ is Hausdorff-close to $\widetilde{\Psi}_\nu(\widetilde{\gamma}')^*$, by the Morse Lemma (Theorem \ref{Morse}).  So statements that are true about geodesics in the model manifold $\widetilde{\model M_\nu}$ are true (within bounded Hausdorff distance) in $\widetilde{M_\nu}$, via $\widetilde{\Psi}_\nu$.  Analogously, statements about geodesics in $\widetilde{\core(M_-)}$ are approximated by statements about geodesics in $\widetilde{\model N}$ via $\widetilde{\Psi}_0: \widetilde {\model N} \to \widetilde{\core(M_-)}$.   This means that the conclusion of the lemma will be immediate from Theorem \ref{almost bounded distance} once we establish that $\Phi_- \circ \Psi_0: \model N \to M_\nu$ and $\Psi_\nu|_{\model N}: \model N \to M_\nu$ are good enough approximations of each other.  That is, the conclusion of the lemma is immediate once we  show that
\begin{equation}\label{bounded maps}
\sup_{x\in \model N}d_{M_\nu}(\Phi_{\scriptscriptstyle -}(\Psi_0(x)), \Psi_\nu(x))<\infty.
\end{equation}
To this end, for a negative integer $n$ and level surface $S_n\subset \model N$, the argument given in the paragraph directly preceding (\ref{surface distance}) in the proof of Proposition \ref{volume preserving limit} shows that there is a constant $E = E(S, \epsilon, B)$ such that \[d_{M_\nu}^H( \Psi_\nu(S_{n}),\Phi_{\scriptscriptstyle -}(\Psi_0(S_{n})))\le E.\]
Now, every $x\in \model N$ is within $1/2$ of some level surface $S_n$, and $\widetilde {\Psi_\nu}$ and $\widetilde{\Psi_0}$ are $(L,c)$ quasi-isometries, so $d_{M_{\scriptscriptstyle - }}(\Psi_0(S_n), \Psi_0(x))\le L/2+c$ and $d_{M_{\nu}}(\Psi_\nu(S_n), \Psi_\nu(x))\le L/2+c$.  Finally, $\Phi_{\scriptscriptstyle -}$ is $B$-bi-Lipschitz, and the diameter of an $\epsilon$-thick hyperbolic surface is at most $D = D(\epsilon, S)$, so \[d_{M_\nu}(\Phi_{\scriptscriptstyle -}(\Psi_0(x)), \Psi_\nu(x))\le (\frac L2+c)(B+1)+D+E<\infty,\]
which establishes (\ref{bounded maps}) and completes the proof of the lemma. 
\end{proof}

Let $\mathcal K' = \mathcal K \cup \cN_{BD'}(\partial\core(M_{\scriptscriptstyle-}))$, and let $g_{\scriptscriptstyle-}: \core(M_{\scriptscriptstyle-}) \to \bR$ be a bump function taking values $1$ away from $\cN_1(\mathcal K')$ and $0$ on $\mathcal K'$.  Take \begin{equation}\label{definition}
f_{\scriptscriptstyle-} (x)= \begin{cases} g_{\scriptscriptstyle-}( \Phi_{\scriptscriptstyle-}\inverse(x)), & x\in \im\Phi_{\scriptscriptstyle-}\inverse \\ 0, & \text{else}.\end{cases}
\end{equation}
\begin{remark}\label{definition of f_-}We have constructed $f_{\scriptscriptstyle-}$ so that $\supp(f_{\scriptscriptstyle-})$ is contained in the complement of $ \mathcal N _{D'}(M_{\nu}\setminus \Phi_{\scriptscriptstyle -}(\core(M_{\scriptscriptstyle -})))$.
\end{remark}
The following is one of the main technical ingredients that will go into the proof of Theorem \ref{relation 1} in the next section.  We want to show that by `zeroing out' an end of the doubly degenerate manifold with $f_{\scriptscriptstyle-}$, the bounded fundamental class of the singly degenerate manifold with positive $f_{\scriptscriptstyle-}$-volume survives.  We write $[\widehat{\omega_{\scriptscriptstyle-}}]$ to denote the bounded fundamental class of $M_{\scriptscriptstyle-}$ and $[\widehat{\omega}]$ for that of $M_\nu$.

\begin{prop}\label{zero out half}
With notation as above, we have an equality 
\[[\widehat{\omega_{\scriptscriptstyle-}}]= [\Phi_{\scriptscriptstyle-}^*\widehat{f_{\scriptscriptstyle-}\omega}]\in \Hb_{\bb}^3(M_{\scriptscriptstyle-};\bR)\]
This equality takes the form \[ [\widehat{\omega_{\scriptscriptstyle-}}]=[\widehat{f_{\scriptscriptstyle-}\omega}]\in \Hb_{\bb}^3(S;\bR)\]
when we suppress markings, \ie pass the first equality through the isometric isomorphism $\Hb_{\bb}^3(M_{\scriptscriptstyle-};\bR)\to \Hb_{\bb}^3(S;\bR)$.  

With $f_+$ and $M_+ = M_{(Y_0, \lambda_+)}$ defined analogously, \[ [\widehat{\omega_+}]=[\widehat{f_+\omega}]\in \Hb_{\bb}^3(S;\bR).\] 

\end{prop}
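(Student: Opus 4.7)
The plan is to adapt the coboundary construction of Lemma \ref{coboundary lemma} and Proposition \ref{coboundary bounded}, combined with the compact-support vanishing Proposition \ref{compact support} and the coarse-geometric control of Lemma \ref{almost bounded distance 2}. The guiding intuition is that $\Phi_{\scriptscriptstyle-}$ fails to be volume preserving only on the compact piece $\mathcal K$ (whose contribution will be absorbed by Proposition \ref{compact support}), and although the $\bH^3$-straightening of a $\Phi_{\scriptscriptstyle-}$-image of a geodesic in $\widetilde{\core(M_{\scriptscriptstyle-})}$ may shortcut outside $\widetilde{\im\Phi_{\scriptscriptstyle-}}$, the bump function $f_{\scriptscriptstyle-}$ is designed to vanish precisely where such shortcutting occurs.

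First, replace $\widehat{\omega_{\scriptscriptstyle-}}$ and $\widehat{f_{\scriptscriptstyle-}\omega}$ by their point-based versions $\widehat{\omega_{\scriptscriptstyle-,p}}$ and $\widehat{(f_{\scriptscriptstyle-}\omega)_{\Phi_{\scriptscriptstyle-}(p)}}$ using Lemma \ref{point support}, with $p \in \core(M_{\scriptscriptstyle-}) \setminus \cN_1(\mathcal K')$. Geodesic convexity of $\core(M_{\scriptscriptstyle-})$ ensures $\str_p\sigma \subset \core(M_{\scriptscriptstyle-})$, so that $\Phi_{\scriptscriptstyle-}\str_p\sigma$ is a well-defined chain in $M_\nu$. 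Because $\Phi_{\scriptscriptstyle-}$ is volume preserving on $\core(M_{\scriptscriptstyle-}) \setminus \mathcal K$ and $g_{\scriptscriptstyle-}$ vanishes on $\mathcal K' \supset \mathcal K$, the pullback $\Phi_{\scriptscriptstyle-}^*(f_{\scriptscriptstyle-}\omega)$ equals $g_{\scriptscriptstyle-}\omega_{\scriptscriptstyle-}$ as smooth $3$-forms on $\core(M_{\scriptscriptstyle-})$. Change of variables then gives
\[
\widehat{\omega_{\scriptscriptstyle-,p}}(\sigma) - \int_{\Phi_{\scriptscriptstyle-}\str_p\sigma} f_{\scriptscriptstyle-}\omega \;=\; \int_{\str_p\sigma}(1-g_{\scriptscriptstyle-})\omega_{\scriptscriptstyle-},
\]
and since $(1-g_{\scriptscriptstyle-})\omega_{\scriptscriptstyle-}$ has compact support, the argument of Proposition \ref{compact support} (Lemma \ref{bounded primitive} combined with Stokes' theorem, applied with $\str_p$ in place of $\str$) exhibits the right-hand side as a bounded coboundary.

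Next, compare $\int_{\Phi_{\scriptscriptstyle-}\str_p\sigma}f_{\scriptscriptstyle-}\omega$ with $\int_{\str_{\Phi_{\scriptscriptstyle-}(p)}\Phi_{\scriptscriptstyle-}\sigma}f_{\scriptscriptstyle-}\omega$ by constructing the homotopy $H(\tau):\Delta_2\times I \to M_\nu$ of Lemma \ref{coboundary lemma}, interpolating between $\Phi_{\scriptscriptstyle-}\str_p\tau$ and $\str_{\Phi_{\scriptscriptstyle-}(p)}\Phi_{\scriptscriptstyle-}\tau$ via nearest-point projection, arranged so that edges map to edges. By Stokes, the resulting $2$-cochain $C(\sigma) = \sum_i (-1)^i \int_{\Delta_2\times I} H(\tau_i)^*(f_{\scriptscriptstyle-}\omega)$ has coboundary equal to the remaining difference between the two cochains under comparison.

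The main obstacle is bounding $\|C\|_\infty$. Unlike the setting of Proposition \ref{coboundary bounded}, the $B$-bi-Lipschitz constant of $\Phi_{\scriptscriptstyle-}$ is measured intrinsically on $\im\Phi_{\scriptscriptstyle-}$, so the $\bH^3$-geodesic between two points in $\widetilde{\im\Phi_{\scriptscriptstyle-}}$ may take an arbitrarily long shortcut; trajectories of $H(\tau)$ can become unboundedly long, and the naive Morse-lemma argument of Section \ref{singly degenerate classes} fails. This is exactly the situation addressed by Lemma \ref{almost bounded distance 2}: along the lift of each edge, whenever the nearest-point projection $r(\tilde\Phi_{\scriptscriptstyle-}\widetilde{\str_p\tau}(x))$ lies at distance greater than $D'$ from $\tilde\Phi_{\scriptscriptstyle-}\widetilde{\str_p\tau}(x)$, it is contained in $\cN_{D'}(\widetilde{M_\nu} \setminus \widetilde{\im\Phi_{\scriptscriptstyle-}})$, on which $f_{\scriptscriptstyle-} \equiv 0$ by Remark \ref{definition of f_-}. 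Exploiting the thin-triangles property of hyperbolic $2$-simplices in $\bH^3$ to transfer this edge control into the interior of $\Delta_2$, the support of the integrand $H(\tau)^*(f_{\scriptscriptstyle-}\omega)$ is confined to a region where $H(\tau)$ stays within a uniform distance of $\Phi_{\scriptscriptstyle-}\str_p\tau$. On that region, the length and area estimates of Proposition \ref{coboundary bounded}, together with $|f_{\scriptscriptstyle-}| \le 1$, yield a uniform bound $\|C\|_\infty \le c(S,\epsilon,B)$. This proves $[\widehat{\omega_{\scriptscriptstyle-}}] = [\Phi_{\scriptscriptstyle-}^*\widehat{f_{\scriptscriptstyle-}\omega}]$; the equality in $\Hb_{\bb}^3(S;\bR)$ follows from the Brooks--Gromov--Ivanov isometric isomorphism, and the $f_+$ statement by a symmetric argument applied to $M_+$.
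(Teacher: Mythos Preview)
Your proposal is correct and follows essentially the same route as the paper's proof: reduce to the point-based cocycle via Lemma \ref{point support}, absorb the compactly supported discrepancy $(1-g_{\scriptscriptstyle-})\omega_{\scriptscriptstyle-}$ using Proposition \ref{compact support}, construct the homotopy $H(\tau)$ exactly as in Section \ref{singly degenerate classes}, and bound the $f_{\scriptscriptstyle-}$-weighted integral by invoking Lemma \ref{almost bounded distance 2} to ensure that long trajectories of $H(\tau)$ lie outside $\supp(f_{\scriptscriptstyle-})$. The only organizational difference is that the paper first applies Proposition \ref{compact support} to pass from $\widehat{\omega_{\scriptscriptstyle-,p}}$ to $\widehat{g_{\scriptscriptstyle-}\omega_{\scriptscriptstyle-,p}}$ and then compares with $\Phi_{\scriptscriptstyle-}^*\widehat{f_{\scriptscriptstyle-}\omega}$ in a single step, whereas you split the difference into two pieces; the content is identical.
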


\begin{proof}
By Lemma \ref{point support}, $[\widehat{\omega_{\scriptscriptstyle-}}]$ is represented by the cocycle $\widehat{\omega_{{\scriptscriptstyle-},p}}$, where $p \in \core(M_{\scriptscriptstyle-})$.  
By Proposition \ref{compact support},  $[\widehat{\omega_{\scriptscriptstyle-,p}}]$ is represented by the cocycle $\widehat{g_{\scriptscriptstyle-}\omega_{-,p}}$, because $1- g_{\scriptscriptstyle-}$ has compact support on $\core(M_{\scriptscriptstyle-})$.
\begin{assumption}\label{notation}
Assume that $\sigma : \Delta_3 \to M_{\scriptscriptstyle-}$ maps the vertices of $\Delta_3$ to $p$.  Then $\str_p\sigma = \str\sigma$, and we drop the subscript ${}_p$ from notation. 
\end{assumption}
Restricting to $\im \Phi_{\scriptscriptstyle-}$ and appealing to the definition of $f_{\scriptscriptstyle-}$ in (\ref{definition}), we compute \[{\Phi_{\scriptscriptstyle-}\inverse}^*(g_{\scriptscriptstyle-}\omega_{\scriptscriptstyle-}) = f_{\scriptscriptstyle-} \cdot \Jac(\Phi_{\scriptscriptstyle-}\inverse)\cdot \omega = f_{\scriptscriptstyle-}\omega,\] because $\Phi_{\scriptscriptstyle-}\inverse$ is orientation and  volume preserving on the support of $f_{\scriptscriptstyle-}$ by Proposition \ref{volume preserving limit}.

We therefore have have \begin{align}
(\widehat{g_{\scriptscriptstyle-}\omega_{-}} - \Phi_{\scriptscriptstyle-}^*\widehat{f_{\scriptscriptstyle-}\omega})(\sigma) & = \int_{\str_{\scriptscriptstyle-}\sigma}g_{\scriptscriptstyle-}\omega_{\scriptscriptstyle-} - \int_{\str \Phi_{\scriptscriptstyle-*}\sigma}f_{\scriptscriptstyle-}\omega \\
 & = \int_{\Phi_{\scriptscriptstyle-*}\str_{\scriptscriptstyle-}\sigma}{\Phi_{\scriptscriptstyle-}\inverse}^*(g_{\scriptscriptstyle-}\omega_{\scriptscriptstyle-}) - \int_{\str \Phi_{\scriptscriptstyle-*}\sigma}f_{\scriptscriptstyle-}\omega \\
 \label{final form}
 &  = \int_{\Phi_{\scriptscriptstyle-*}\str_{\scriptscriptstyle-}\sigma- \str {\Phi_{\scriptscriptstyle-*}} \sigma}f_{\scriptscriptstyle-}\omega
\end{align}
We define $H(\tau)$ \emph{exactly} as in the discussion preceding Lemma \ref{coboundary lemma}, and take 
\[C_{f_{\scriptscriptstyle-}}(\tau) = \int_{\Delta_2\times I }H(\tau)^*(f_{\scriptscriptstyle-}\omega).\]

Apply the proof of Lemma \ref{coboundary lemma}, using the equality (\ref{final form}) to see that \[ (\widehat{g_{\scriptscriptstyle-}\omega_{-}} - \Phi_{\scriptscriptstyle-}^*\widehat{f_{\scriptscriptstyle-}\omega})(\sigma) = dC_{f_{\scriptscriptstyle-}}(\sigma).\] 

As in Assumption \ref{notation}, assume $\tau$ maps the vertices of $\Delta_2$ to $p$.  We want to show that $|C_{f_{\scriptscriptstyle-}}(\tau)|<M$, for some $M$.  We mimic the proof of Proposition \ref{coboundary bounded}.  The proof had two main ingredients, namely (1) that trajectories $H(\tau)(x, I)$ had uniformly bounded length and (2) that the area of the level triangle $H(\tau)(\Delta_2, t)$ was uniformly bounded.    We proceed \emph{exactly} as in the proof of Lemma \ref{coboundary bounded}, except we need to replace the bound on trajectories with something that makes more sense.  We have
\begin{align}
\left| \int_{\Delta_2\times I} H(\tau)^*f_{\scriptscriptstyle-}\omega \right| & = \left|\int_0^1  \int_{\Delta_t} f_{\scriptscriptstyle-}\left\langle \frac{\partial H(\tau)}{\partial t} , \vec n_t \right\rangle ~dA_t~dt \right|\\ 
&\le  \int_0^{\frac12} \int_{\Delta_t} \left|\sqrt{f_{\scriptscriptstyle-}} \left\langle \frac{\partial H(\tau)}{\partial t} , \vec n_t \right\rangle\right| \sqrt{f_{\scriptscriptstyle-}}~dA_t~dt.\label{integral inequality 2}
\end{align}

The geodesic triangle $\str{\Phi_{\scriptscriptstyle-*}}\tau$ tracks the B-bi-Lipschitz triangle $\Phi_{-*}\str \tau$ on $\supp(f_-)$ (see Figure \ref{zero out}).  
  \begin{figure}[h]
\includegraphics[width=4in]{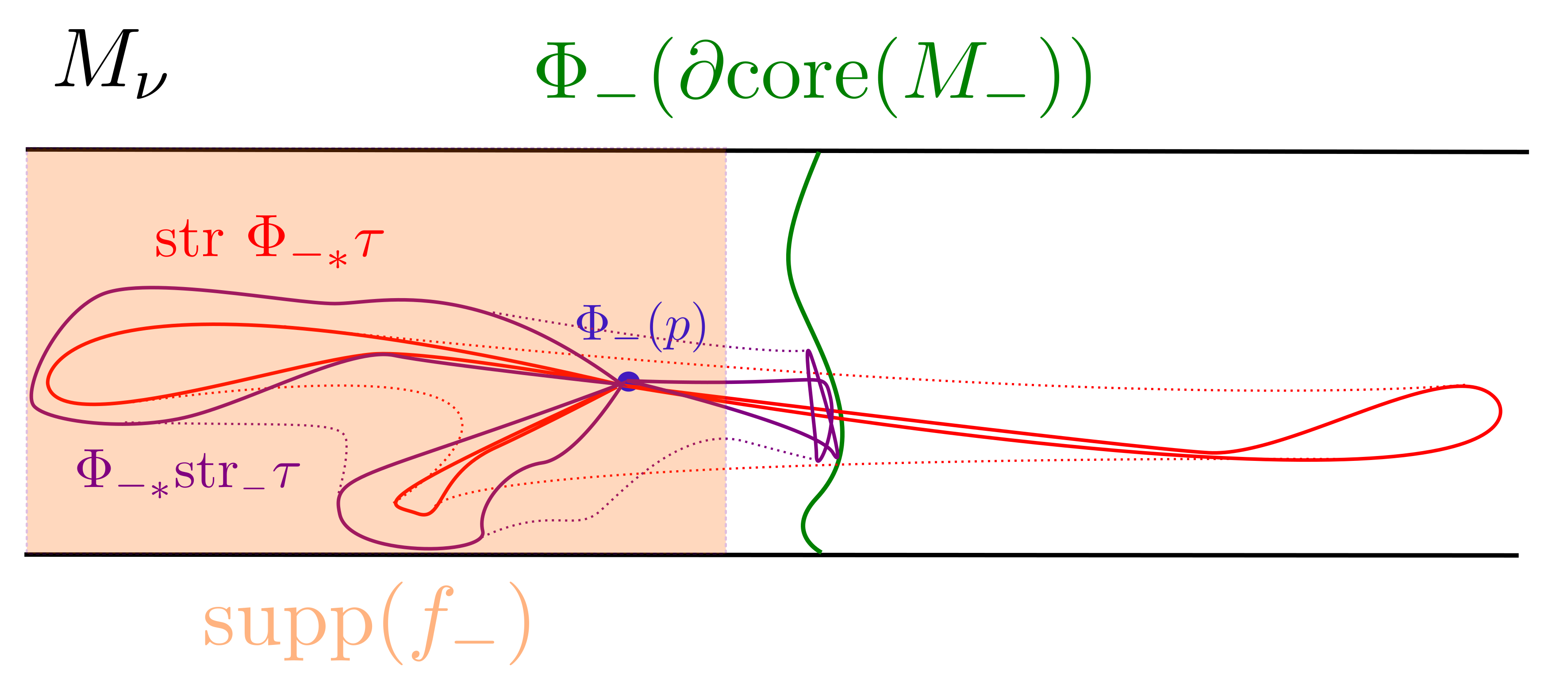}
\caption{Trajectories of $H(\tau)$ have bounded length when intersected with the support of $f_-$,  even though they are generally unbounded in $M_\nu$.}
\label{zero out}
\end{figure}
 More precisely, by Lemma \ref{almost bounded distance 2} and by construction of $f_-$ (see Remark \ref{definition of f_-}), there is a $D'$ such that if $p\in [i,j]\subset\Delta_2$ and  $q = \widetilde{\Phi_{\scriptscriptstyle-*}\str_{\scriptscriptstyle -} \tau}(p)\in \supp(f_-)$, then
 \[d_{\widetilde{M}}(\Pi(q),q)\le D',\] 
 where $\Pi: \bH^3 \to \widetilde{\str_{\scriptscriptstyle } \Phi_{\scriptscriptstyle-*}\tau}([i,j])$ is closest point projection.  Take $c:=(2B\log\sqrt3+2D' +1)$; then
 \begin{equation}\label{traj2}
 \left|\sqrt{f_{\scriptscriptstyle-}} \left\langle \frac{\partial H(\tau)}{\partial t} , \vec n_t \right\rangle\right| \le 2c.
 \end{equation}
 Compare (\ref{traj2}) with (\ref{traj1}) - (\ref{bounded tracks}) in the proof of Proposition \ref{coboundary bounded}.

Recall that $\Phi_{\scriptscriptstyle-}$ is $B$-Lipschitz. On $\supp(f_-)$, the discussion preceding estimate (\ref{bounded area}) bounding the area of any level triangle goes through as before
\begin{equation} \label{bounded area 2}
\int_{\Delta_t}\sqrt{f_{\scriptscriptstyle-}}~dA_t \le  \pi (Bc)^2, ~t\in [0, 1/2].
\end{equation}

 Therefore, 
 \begin{align*}
  | C_{f_{\scriptscriptstyle-}}(\tau)| = \left| \int_{\Delta_2\times I} H(\tau)^*f_{\scriptscriptstyle-}\omega \right|  \le \pi B^2c^3,
  \end{align*}
 which completes the proof of the proposition.
\end{proof}

\section{The relations and a Banach subspace}\label{relations and banach subspace}
Let $\lambda, \lambda' \in \cEL_{\bb}(S)$ and $X, Y\in \teich (S)$, then we have representations $\rho_{(\nu_{\scriptscriptstyle-},\nu_+)}: \pi_1(S)\to \G$ where $\nu_\pm \in\{\lambda, \lambda', X, Y\}$.  Let $\widehat{\omega}({\nu_{\scriptscriptstyle-}},{\nu_+}) \in \Cb_{\bb}^3(S;\bR)$ be the corresponding bounded $3$-cocycle.  
We gather the results from the previous sections to conclude that 
\begin{theorem}\label{main 1}
Let $\lambda, \lambda' \in \cEL(S)$ have bounded geometry and $X, Y\in \teich (S)$ be arbitrary.  We have an equality in bounded cohomology \[[\widehat{\omega}(\lambda', \lambda)]= [\widehat{\omega}(X, \lambda)]+ [\widehat{\omega}(\lambda',Y)] \in \Hb_{\bb}^3(S;\bR).\]
\end{theorem}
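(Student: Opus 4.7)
The plan is to assemble Theorem \ref{main 1} from the tools developed in Sections \ref{singly degenerate classes}, \ref{compactly supported classes}, \ref{volume preserving limit maps}, and \ref{zero out half the manifold}: namely Theorem \ref{main}, Proposition \ref{volume preserving limit}, Proposition \ref{zero out half}, and Proposition \ref{compact support}.  First I would choose $Y_0\in\teich(S)$ on the Teichm\"uller geodesic joining $\lambda'$ to $\lambda$, and set $M_- = M_{(\lambda',Y_0)}$, $M_+ = M_{(Y_0,\lambda)}$, and $M = M_\nu$ for $\nu = (\lambda',\lambda)$.  Since singly degenerate classes depend only on the geometrically infinite end invariant by Theorem \ref{main},
\[
[\widehat{\omega}(X,\lambda)] = [\widehat{\omega}(Y_0,\lambda)] = [\widehat{\omega_+}] \quad\text{and}\quad [\widehat{\omega}(\lambda',Y)] = [\widehat{\omega}(\lambda',Y_0)] = [\widehat{\omega_-}],
\]
so it would suffice to prove $[\widehat{\omega}(\lambda',\lambda)] = [\widehat{\omega_-}] + [\widehat{\omega_+}]$ in $\Hb_{\bb}^3(S;\bR)$.

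Next, I would invoke Proposition \ref{volume preserving limit} on each side to obtain bi-Lipschitz embeddings $\Phi_\pm : \core(M_\pm) \hookrightarrow M$ in the preferred homotopy classes, each volume-preserving off a compact product region.  Proposition \ref{zero out half} then produces smooth bump functions $f_\pm : M \to [0,1]$, supported in $\im \Phi_\pm$ and identically $1$ off a compact subset of $\im \Phi_\pm$, satisfying $[\widehat{\omega_\pm}] = [\widehat{f_\pm\,\omega}]$ in $\Hb_{\bb}^3(S;\bR)$, where $\omega$ is the hyperbolic volume form on $M$.

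The crucial step is to arrange the $f_\pm$ so that $h := 1 - f_- - f_+$ is compactly supported on $M$.  The bounded-error estimate (\ref{bounded maps}) comparing $\Phi_\pm\circ\Psi_\pm$ with $\Psi_\nu|_{\model M_\pm}$ shows that $\im\Phi_-$ (resp.\ $\im\Phi_+$) is Hausdorff-close to the half of $M$ corresponding to the $\lambda'$-end (resp.\ $\lambda$-end).  Enlarging the compact sets used to build $f_\pm$ if needed, I would arrange that $f_-\equiv 1$ on a neighborhood of the $\lambda'$-end and $f_-\equiv 0$ on a neighborhood of the $\lambda$-end, and symmetrically for $f_+$, so that $f_- + f_+ \equiv 1$ outside some compact subset of $M$.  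Then $h$ is smooth and compactly supported, Proposition \ref{compact support} yields $[\widehat{h\,\omega}] = 0$, and from the pointwise identity $\omega = f_-\omega + f_+\omega + h\,\omega$ together with linearity of $\str$ and integration we conclude
\[
[\widehat{\omega}(\lambda',\lambda)] = [\widehat{f_-\,\omega}] + [\widehat{f_+\,\omega}] + [\widehat{h\,\omega}] = [\widehat{\omega_-}] + [\widehat{\omega_+}],
\]
which, combined with the first step, gives the desired identity.

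The main obstacle is the support-covering arrangement in the third step: one must verify that the bi-Lipschitz images of the two singly degenerate convex cores truly cover $M$ up to a common compact set, and this is precisely where the bounded geometry hypothesis on $\lambda, \lambda'$ is essential.  Without it Proposition \ref{volume preserving limit} would fail to produce limit maps making linear progress into the ends; with it, the bounded-error estimate between $\Phi_\pm\circ\Psi_\pm$ and $\Psi_\nu|_{\model M_\pm}$ is exactly what makes the support-covering tractable.
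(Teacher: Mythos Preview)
Your proposal is correct and follows essentially the same route as the paper's proof: reduce to $Y_0$ via Theorem \ref{main}, apply Proposition \ref{zero out half} to identify each singly degenerate class with a weighted volume class $[\widehat{f_\pm\omega}]$ on $M_\nu$, and then compare $f_- + f_+$ to the constant function $1$. The only cosmetic difference is that the paper arranges the bump functions so that $f_- + f_+ = 1$ from the outset (absorbing the adjustment into the choice of $f_\pm$), whereas you keep the remainder $h = 1 - f_- - f_+$ explicit and kill it with Proposition \ref{compact support}; both arguments rest on the same observation that $f_- + f_+$ differs from $1$ only on a compact set.
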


\begin{proof}
The constant function $1$ on $M_{(\lambda,\lambda')}$ decomposes as a sum $1=f_-+f_+$ where $f_\pm\ge 0$ and  $\{x\in M_{(\lambda',\lambda)}: (f_-\cdot f_+)(x)\not= 0\}$ is precompact.   By linearity of the integral, $[\widehat{f_+\omega}(\lambda', \lambda)]+[\widehat{f_-\omega}(\lambda', \lambda)]  = [\widehat{\omega}(\lambda', \lambda)]$.  Consider the sequence $\{Y_i\}_{i\in \bZ}$ along the Teichm\"uller geodesic $t\mapsto Y_t$ between $\lambda'$ and $\lambda$. 
Applying Proposition \ref{zero out half}, \[[\widehat{f_-\omega}(\lambda', \lambda)] =  [\widehat{\omega}(\lambda',Y_{k_-})], \] 
and \[[\widehat{f_+\omega}(\lambda', \lambda)] =  [\widehat{\omega}(Y_{k_+}, \lambda)],\]
for some $k_-, k_+\in \bZ$.
By Theorem \ref{main}, $[\widehat{\omega}(Y_{k_+}, \lambda)] = [\widehat{\omega}(X, \lambda)]$  and $[\widehat{\omega}(\lambda',Y_{k_-})] = [\widehat{\omega}(\lambda',Y)]  $.  Stringing together the equalities, we arrive at the claim.  
\end{proof}

\begin{cor}\label{cor 1}
If $\lambda_1, \lambda_2, \lambda_3 \in \cEL_{\bb}(S)$ are distinct,  then we have an equality in bounded cohomology \[[\widehat{\omega}(\lambda_1, \lambda_3)]= [\widehat{\omega}(\lambda_1, \lambda_2)]+ [\widehat{\omega}(\lambda_2,\lambda_3 )] \in \Hb_{\bb}^3(S; \bR).\]
\end{cor}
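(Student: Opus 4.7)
The plan is to apply Theorem \ref{main 1} three times in parallel and then cancel common singly degenerate summands. Fix any $X \in \teich(S)$ and use $X = Y$ consistently in each application of Theorem \ref{main 1}:
\begin{align*}
[\widehat{\omega}(\lambda_1, \lambda_3)] &= [\widehat{\omega}(X, \lambda_3)] + [\widehat{\omega}(\lambda_1, X)],\\
[\widehat{\omega}(\lambda_1, \lambda_2)] &= [\widehat{\omega}(X, \lambda_2)] + [\widehat{\omega}(\lambda_1, X)],\\
[\widehat{\omega}(\lambda_2, \lambda_3)] &= [\widehat{\omega}(X, \lambda_3)] + [\widehat{\omega}(\lambda_2, X)].
\end{align*}
Adding the second and third of these and subtracting the first, the terms $[\widehat{\omega}(X, \lambda_3)]$ and $[\widehat{\omega}(\lambda_1, X)]$ cancel, so the corollary is reduced to the single residual identity
\[
[\widehat{\omega}(X, \lambda_2)] + [\widehat{\omega}(\lambda_2, X)] = 0 \ \in\ \Hb_{\bb}^3(S; \bR).
\]

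For this residual identity I would invoke the orientation-reversing isometry $\iota : M_{(X, \lambda_2)} \to M_{(\lambda_2, X)}$ from Section \ref{singly degenerate classes}, which satisfies $\iota \circ f_{(X, \lambda_2)} \sim f_{(\lambda_2, X)}$. Because $\iota$ is isometric but orientation-reversing, $\iota^*\omega_{(\lambda_2, X)} = -\omega_{(X, \lambda_2)}$; straightening of singular simplices commutes with isometries, so this lifts to the cochain-level identity $\iota^*\widehat{\omega}_{(\lambda_2, X)} = -\widehat{\omega}_{(X, \lambda_2)}$. Pulling back by $f_{(X, \lambda_2)}$ and identifying $\Hb_{\bb}^3(M_{(X,\lambda_2)};\bR) \cong \Hb_{\bb}^3(S;\bR)$ yields $[\widehat{\omega}(\lambda_2, X)] = -[\widehat{\omega}(X, \lambda_2)]$, which makes the two residual terms cancel.

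The only genuinely delicate step is the sign identity produced by $\iota$. The paper introduces this isometry in Section \ref{singly degenerate classes} only as a ``WLOG'' device to place the geometrically infinite end on the $+$ side, and so the resulting sign change in bounded cohomology is implicit rather than recorded; it has to be extracted by unwinding the definition $\widehat{\omega}(\sigma) = \int_{\str \sigma}\omega$ through an orientation-reversing isometry. Beyond that single observation, the corollary is linear bookkeeping on top of Theorem \ref{main 1} alone, with no further input needed from the bounded-geometry machinery, Proposition \ref{compact support}, or the ladder estimates of Section \ref{ladders}.
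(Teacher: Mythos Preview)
Your proof is correct and follows essentially the same approach as the paper: decompose the doubly degenerate classes via Theorem \ref{main 1} into singly degenerate pieces, then cancel the $\lambda_2$ terms using the orientation-reversing isometry $M_{(X,\lambda_2)}\to M_{(\lambda_2,X)}$ to obtain $[\widehat{\omega}(X,\lambda_2)]=-[\widehat{\omega}(\lambda_2,X)]$. The only cosmetic difference is that the paper applies Theorem \ref{main 1} to two terms and then recombines, whereas you decompose all three terms first; the substance is identical, and your added justification of the sign identity is a welcome elaboration of what the paper leaves implicit.
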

\begin{proof}
By Theorem \ref{main 1} for any $X\in \teich (S)$,  \begin{equation}\label{split up} 
[\widehat{\omega}(\lambda_1, \lambda_2)]+ [\widehat{\omega}(\lambda_2,\lambda_3 )] =  [\widehat{\omega}(\lambda_1, X)] +[\widehat{\omega}(X, \lambda_2)]+ [\widehat{\omega}(\lambda_2,X )]+ [\widehat{\omega}(X,\lambda_3 )].
\end{equation}
There is an orientation reversing isometry $M_{(X,\lambda_2)}\to M_{(\lambda_2,X)}$ respecting  markings, so $[\widehat{\omega}(X, \lambda_2)] = - [\widehat{\omega}(\lambda_2,X )]$.  The right hand side of (\ref{split up}) becomes \[ [\widehat{\omega}(\lambda_1, X)] + [\widehat{\omega}(X,\lambda_3 )] = [\widehat{\omega}(\lambda_1, \lambda_3)],\]
by another application of Theorem \ref{main 1}.
\end{proof}

The following is an easy consequence of Theorem \ref{old work} (see \cite[Theorem 6.2 and Theorem 7.7]{Farre:bdd}).
\begin{theorem}\label{infinity norm}
Suppose $\{\lambda_n\}_{n\in \bN}\subset \cEL(S)$ are distinct, and suppose $\{\alpha_n\}_{n\in \bN}\subset \bR$ is such that $\sum_{n = 1}^\infty \alpha_n[\widehat{\omega}(X, \lambda_n)] \in \Hb_{\bb}^3(S;\bR)$.  Then $ \sum_{n = 1}^\infty \alpha_n[\widehat{\omega}(X, \lambda_n)] = 0 $ if and only if $\alpha_n = 0 $ for all $n$.
\end{theorem}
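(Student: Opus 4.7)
The ``if'' direction is immediate, so the real content is the converse. My plan is to reduce to the finite-sum statement (2) of Theorem \ref{old work} by truncation. Write $S_N = \sum_{n=1}^N \alpha_n[\widehat{\omega}(X,\lambda_n)]$ for the partial sums. The hypothesis that $\sum_{n=1}^\infty \alpha_n[\widehat{\omega}(X,\lambda_n)] = 0$ in $\Hb_{\bb}^3(S;\bR)$ should be interpreted in the only natural way available in this non-Banach setting, namely that $\|S_N\|_\infty \to 0$ as $N\to\infty$.

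Each representation $\rho(X, \lambda_n)$ is discrete, faithful, without parabolics, and singly degenerate with sole geometrically infinite end invariant $\lambda_n$. Since the $\lambda_n$ are pairwise distinct, the hypothesis of Theorem \ref{old work} is satisfied for the family $\{\rho(X,\lambda_n)\}_{n\in\bN}$. Applying part (2) of that theorem to the first $N$ terms yields
\[
\|S_N\|_\infty \;\geq\; \epsilon_S\,\max_{1\leq n\leq N}|\alpha_n|.
\]
Fixing any index $k$ and taking $N\geq k$, we obtain $\|S_N\|_\infty \geq \epsilon_S|\alpha_k|$. Letting $N\to\infty$ forces $|\alpha_k|=0$, and since $k$ was arbitrary all coefficients vanish.

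The only potential obstacle is making sense of the infinite sum, but this is a minor point: convergence in the pseudo-norm is forced by the hypothesis that the sum defines an element of $\Hb_{\bb}^3(S;\bR)$, and once granted, the uniform lower bound from Theorem \ref{old work}(2) passes to the limit by continuity of the seminorm. No further geometric input is required, which matches the author's remark that this is an ``easy consequence'' of the earlier work.
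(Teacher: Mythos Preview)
Your argument is correct and essentially identical to the paper's: both invoke part (2) of Theorem \ref{old work} to obtain the uniform lower bound $\|S_N\|_\infty \ge \epsilon_S\max_{1\le n\le N}|\alpha_n|$ on finite partial sums, and then pass to the limit to force every coefficient to vanish. The paper phrases the last step as a contrapositive (if some $\alpha_{n_0}\neq 0$ then the infinite sum has seminorm at least $\epsilon_S|\alpha_{n_0}|>0$), but this is the same reasoning you give, just compressed.
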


\begin{proof}The `if' direction is trivial.  For the `only if' direction, we apply Theorem \ref{old work} to see that there is a number $\epsilon_S>0$ depending only on $S$ such that any finite sum 
\[\|\sum_{n = 1}^N \alpha_n[\widehat{\omega}(X, \lambda_n)] \|_\infty \ge \epsilon_S\max\{|\alpha_n|\}.\]
So, if $\alpha_{n_0}\not = 0$ for some $n_0$, then the infinite sum 
\[\|\sum_{n = 1}^\infty \alpha_n[\widehat{\omega}(X, \lambda_n)] \|_\infty \ge \epsilon_S|\alpha_{n_0}| >0.\]
Thus, $\sum_{n = 1}^\infty \alpha_n[\widehat{\omega}(X, \lambda_n)] = 0$ only if $\alpha_{n} = 0$ for all $n$.  
\end{proof}

Let $\cEL_{\bb}(S)\subset \cEL(S)$ be the set of bounded geometry ending laminations; $\cEL_{\bb}(S)$ is a mapping class group invariant subspace.  Fix a base point $X\in \teich(S)$, and define $\iota: \cEL_{\bb}(S) \to \Hb_{\bb}^3(S;\bR)$ by the rule $\iota(\lambda) = [\widehat{\omega}(X, \lambda)]$.  By Theorem \ref{main}, $\iota$ does not depend on the choice of $X$.  By Theorem \ref{main 1},  $[\widehat{\omega}(\lambda', \lambda)] = \iota(\lambda) - \iota(\lambda')$, so every doubly degenerate bounded fundamental class without parabolics is in the $\bR$-linear span of $\im\iota$.  Let $Z\subset \Hb_{\bb}^3(S;\bR)$ be the subspace of zero-seminorm elements.  Then $\overline{\Hb}_{\bb}^3(S;\bR) =\Hb_{\bb}^3(S;\bR)/Z$ is a Banach space with $\| \cdot \|_\infty$ norm.   Take $\bar\iota$ to be the composition of $\iota$ with the quotient $\Hb_{\bb}^3(S;\bR)\to \overline{\Hb}_{\bb}^3(S;\bR)$.  By Theorem \ref{infinity norm}, $\im \bar\iota$ is a topological basis for the $\|\cdot \|_\infty$-closure of the span of its image $\mathcal V\subset \overline{\Hb}_{\bb}^3(S;\bR)$.  By a topological basis, we mean that $\sum_{n = 1}^\infty \alpha_n\bar\iota(\lambda_n) = 0$ only if $\alpha_{n} = 0$ for all $n$ when $\sum_{n = 1}^\infty \alpha_n\bar\iota(\lambda_n) \in \overline{\Hb}_{\bb}^3(S;\bR)$.  We have produced
\begin{cor}
The Banach subspace $\mathcal V\subset \overline{\Hb}_{\bb}^3(S;\bR)$ has topological basis $\bar\iota(\cEL_{\bb}(S))$, $\bar\iota$ is a mapping class group equivariant, and $\mathcal V$ contains all of the doubly degenerate bounded volume classes with bounded geometry. 
\end{cor}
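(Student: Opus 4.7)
The plan is to assemble this statement directly from the results already proved in this section, with $\mathcal V$ defined as the $\|\cdot\|_\infty$-closure of the $\bR$-linear span of $\bar\iota(\cEL_{\bb}(S))$ in the Banach space $\overline{\Hb}_{\bb}^3(S;\bR)$. Because $\mathcal V$ is defined as a closed linear subspace of a Banach space, it is itself a Banach space; nothing needs to be checked there. The three substantive claims (topological basis, mapping class group equivariance, containment of doubly degenerate classes) can each be read off from prior results, so the work is really to identify the right citation for each and check that the definitions match.

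First I would handle the topological basis property. By Theorem \ref{infinity norm}, for any sequence $\{\lambda_n\}\subset \cEL_{\bb}(S)$ of distinct laminations and any $\{\alpha_n\}\subset \bR$ such that $\sum_n \alpha_n [\widehat{\omega}(X,\lambda_n)]$ represents an element of $\Hb_{\bb}^3(S;\bR)$, the sum vanishes only if every $\alpha_n = 0$. Passing through the quotient $\Hb_{\bb}^3(S;\bR)\to \overline{\Hb}_{\bb}^3(S;\bR)$ preserves the uniform lower bound $\epsilon_S\max|\alpha_n|$ on the seminorm of finite partial sums (since this bound is witnessed on the level of the seminorm), so the image $\bar\iota(\cEL_{\bb}(S))$ is $\|\cdot\|_\infty$-separated and linearly independent in the sense required of a topological basis for its closed span $\mathcal V$.

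Next I would verify mapping class group equivariance. The mapping class group $\Mod(S)$ acts on $\pi_1(S)$ by outer automorphisms, hence on $\Hb_{\bb}^3(S;\bR)$ and its Banach quotient $\overline{\Hb}_{\bb}^3(S;\bR)$; it also acts on $\cEL_{\bb}(S)$ since $\cEL_{\bb}(S)$ is a $\Mod(S)$-invariant subset of $\cEL(S)$. For $\phi\in \Mod(S)$ and $\lambda\in \cEL_{\bb}(S)$, precomposing a representation with $\phi$ replaces the end invariants $(X,\lambda)$ by $(\phi^{-1}X, \phi^{-1}\lambda)$, so by definition $\phi^*\iota(\lambda) = [\widehat\omega(\phi^{-1}X,\phi^{-1}\lambda)]$. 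Independence of $\iota$ from the basepoint $X$ (Theorem \ref{main}, used after descending to $\overline{\Hb}_{\bb}^3$) then gives $\phi^*\bar\iota(\lambda) = \bar\iota(\phi^{-1}\lambda)$. This is the required equivariance.

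Finally, the containment of the doubly degenerate bounded classes with bounded geometry is immediate from Theorem \ref{main 1}: for $\lambda,\lambda'\in \cEL_{\bb}(S)$ the relation $[\widehat\omega(\lambda',\lambda)] = [\widehat\omega(X,\lambda)] + [\widehat\omega(\lambda',Y)] = \iota(\lambda) - \iota(\lambda')$ (the last equality using Theorem \ref{main} once more to replace $[\widehat\omega(\lambda',Y)]$ by $-\iota(\lambda')$ via an orientation-reversing isometry, as in the proof of Corollary \ref{cor 1}) exhibits the doubly degenerate class as a two-term $\bR$-linear combination of elements of $\bar\iota(\cEL_{\bb}(S))$, hence as an element of $\mathcal V$. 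There is no real obstacle here; the corollary is essentially a bookkeeping summary of Theorems \ref{main}, \ref{main 1} and \ref{infinity norm}. The only place one needs to be slightly careful is to confirm that passing from the seminormed space $\Hb_{\bb}^3(S;\bR)$ to the Banach quotient preserves the uniform separation estimate from Theorem \ref{old work}, which it does precisely because that estimate is stated for the seminorm.
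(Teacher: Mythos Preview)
Your proposal is correct and follows essentially the same approach as the paper; the paper's proof is just the paragraph of discussion immediately preceding the corollary, which cites Theorem \ref{main} for basepoint independence, Theorem \ref{main 1} for the containment of doubly degenerate classes via $[\widehat\omega(\lambda',\lambda)] = \iota(\lambda)-\iota(\lambda')$, and Theorem \ref{infinity norm} for the topological basis property. You spell out the mapping class group equivariance of $\bar\iota$ more explicitly than the paper does (the paper leaves it as implicit from the basepoint independence and the $\Mod(S)$-invariance of $\cEL_{\bb}(S)$), but the argument is the same.
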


\bibliography{groups}{}
\bibliographystyle{amsalpha.bst}
\Addresses
\end{document}